 % doc style ---------------------------------------------------------
\documentclass[12pt]{article}

% packages ---------------------------------------------------------
\usepackage{amssymb}
\usepackage{amsthm}
\usepackage{amsmath}
\usepackage{graphicx}
\usepackage{fullpage}
\usepackage{color}
\usepackage{enumerate}
 \numberwithin{equation}{section}
\usepackage{booktabs}
%\usepackage{refcheck}
% theorems ---------------------------------------------------
\theoremstyle{plain}
\newtheorem{thm}{Theorem}[section]
\newtheorem{cor}[thm]{Corollary}

\newtheorem{lem}[thm]{Lemma}
\newtheorem{prop}[thm]{Proposition}

% definitions-------------------------------------------------------------------
\theoremstyle{definition}
\newtheorem{defn}[thm]{Definition}
\newtheorem{ex}[thm]{Example}

% remarks-------------------------------------------------------------------
\theoremstyle{remark}

\newtheorem{rem}[thm]{Remark}

% mathbb-------------------------------------------------------------------
\newcommand{\N}{\mathbb{N}}
\newcommand{\R}{\mathbb{R}}

% mathcal-------------------------------------------------------------------

% other new commands----------------------------------------------------------

\newcommand{\bp}{\begin{proof}[\ensuremath{\mathbf{Proof}}]}
\newcommand{\bs}{\begin{proof}[\ensuremath{\mathbf{Solution}}]}
\newcommand{\ep}{\end{proof}}
\newcommand{\Jp}{{\cal J}_p}
\newcommand{\dom}{\text{Dom}}

\begin{document}

\title{Approximation of the least Rayleigh quotient for degree $p$ homogeneous functionals}

\author{Ryan Hynd\footnote{Department of Mathematics, University of Pennsylvania.  Partially supported by NSF grant DMS-1301628.}\; and Erik Lindgren\footnote{Department of Mathematics, KTH. Supported by the Swedish Research Council, grant no. 2012-3124. Partially supported by the Royal Swedish Academy of Sciences.}}  
\maketitle

\begin{abstract}
We present two novel methods for approximating minimizers of the abstract Rayleigh quotient $\Phi(u)/ \|u\|^p$. 
Here $\Phi$ is a strictly convex functional on a Banach space with norm  $\|\cdot\|$, and $\Phi$
is assumed to be positively homogeneous of degree $p\in (1,\infty)$.  Minimizers are shown 
to satisfy $\partial \Phi(u)- \lambda\Jp(u)\ni 0$ for a certain $\lambda\in \R$, where 
$\Jp$ is the subdifferential of $\frac{1}{p}\|\cdot\|^p$.  The first approximation scheme is based on 
inverse iteration for square matrices and involves sequences that satisfy
$$
\partial \Phi(u_k)- \Jp(u_{k-1})\ni 0 \quad (k\in \N).
$$
The second method is based on the large time behavior of solutions of the doubly nonlinear evolution 
$$
\Jp(\dot v(t))+\partial\Phi(v(t))\ni 0 \quad(a.e.\;t>0)
$$
and more generally $p$-curves of maximal slope for $\Phi$. We show that both schemes have the remarkable property that the Rayleigh quotient is nonincreasing along solutions and that properly scaled solutions converge to a minimizer of $\Phi(u)/ \|u\|^p$. These results are new even for Hilbert spaces and their primary application is in the approximation of optimal constants and extremal functions for inequalities in Sobolev spaces. 
\end{abstract}

\noindent {\bf   AMS classification:} 35A15, 35K55, 49Q20, 47J10, 35B40\\
\noindent {\bf Keywords:} Rayleigh quotient, nonlinear eigenvalue problem, doubly nonlinear evolution, inverse iteration, large time behaviour \\

%%%%%%%%%%%%%%%%%%%%%%%% Iteration %%%%%%%%%%%%%%%%%%%%%%%%%%%
\section{Introduction}
We begin with an elementary, motivating example. Let $A$ be a real symmetric, positive definite $n\times n$ matrix. The smallest eigenvalue $\sigma$ of $A$ is given by least value the Rayleigh quotient $(Av\cdot v)/\|v\|^2$ may assume. That is, 
$$
\sigma=\inf\left\{\frac{Av\cdot v}{\| v\| ^2}: v\neq 0\right\}
$$
where $\| \cdot\| $ is the Euclidean norm on $\R^n$.  It is evident that for $w\neq 0$
\begin{equation}\label{wMinimizer}
Aw=\sigma w  \quad\quad \text{if and only if}\quad\quad \sigma=\frac{Aw\cdot w}{\| w\| ^2}.
\end{equation}

\par We will recall two methods that are used to approximate $\sigma$ and its corresponding eigenvectors.  The first is  
inverse iteration:
\begin{equation}\label{InvIterMat}
Au_k=u_{k-1},\quad k\in \N \\
\end{equation}
for a given $u_0\in \R^n$. It turns out that the limit $\lim_{k\rightarrow\infty}\sigma^ku_k$ exists and satisfies \eqref{wMinimizer} when it is not equal to $0\in \R^n$.  In this case, 
$$
\lim_{k\rightarrow\infty}\frac{Au_k\cdot u_k}{\| u_k\| ^2}=\sigma.
$$
\par The second method is based on the large time limit of solutions of the ordinary differential equation
\begin{equation}\label{linearODE}
\dot{v}(t)+Av(t) = 0 \quad (t>0).
\end{equation}
It is straightforward to verify that
%$$
%t\mapsto\frac{Av(t)\cdot v(t)}{\| v(t)\| ^2}
%$$ 
%is nonincreasing andthat
$\lim_{t\rightarrow\infty}e^{\sigma t}v(t)$ exists and satisfies \eqref{wMinimizer} when it is not equal to $0\in \R^n$.  In this case, 
$$
\lim_{t\rightarrow\infty}\frac{Av(t)\cdot v(t)}{\| v(t)\| ^2}=\sigma.
$$
The purpose of this paper is to generalize these convergence assertions to Rayleigh quotients that are defined on Banach spaces. 
In particular, we will show how these ideas provide new understanding of optimality conditions for functional inequalities in Sobolev spaces.

\par Let $X$ be a Banach space over $\R$ with norm $\| \cdot \| $ and topological dual $X^*$.  We will study functionals $\Phi: X\rightarrow [0,\infty]$ that are proper, convex, lower semicontinuous, and have compact sublevel sets. Moreover, we will assume that each $\Phi$ is strictly convex on its domain 
$$
\dom(\Phi):=\{u\in X: \Phi(u)<\infty\},
$$
and is positively homogeneous of degree $p\in (1,\infty)$. That is 
$$
\Phi(t u)=t^p\Phi(u)
$$
for each $t\ge 0$ and $u\in X$.  These properties will be assumed throughout this entire paper.  Moreover, we will always assume $p\in (1,\infty)$ and write $q=p/(p-1)$ for the dual H\"{o}lder exponent to $p$. 

\par In our motivating example described above, $X=\R^n$ equipped with the Euclidean norm and $\Phi(u)=\frac{1}{2}Au\cdot u$.  In the spirit of that example, we consider finding $u\in X\setminus\{0\}$ that minimizes the abstract Rayleigh quotient $\Phi(u)/\| u\| ^p$. To this end, we define
\begin{equation}\label{LamPee}
\lambda_p:=\inf\left\{p\frac{\Phi(u)}{\| u\| ^p}: u\neq 0\right\}
\end{equation}
to be the least Rayleigh quotient associated with $\Phi$. We will
argue below that minimizers of $\Phi(u)/\| u\| ^p$ exist and that for $w\neq 0$
$$
\partial\Phi(w)-\lambda_p {\cal J}_p(w)\ni 0\quad\quad \text{if and only if}\quad\quad
\lambda_p=p\frac{\Phi(w)}{\| w\| ^p}.
$$

\par Here
$$
\partial \Phi(u):=\left\{\xi\in X^*: \Phi(w)\ge \Phi(u)+\langle \xi,w-u\rangle\;\text{for all}\; w\in X\right\}
$$
is the subdifferential of $\Phi$ at $u$ and ${\cal J}_p(u)$ is the subdifferential of $\frac{1}{p}\| \cdot \| ^p$ at $u\in X$.  We are using the notation $\langle \xi, u\rangle:=\xi(u)$ and we will write $\| \xi\| _*:=\sup\{|\langle \xi, u\rangle|: \| u\| \le 1\}$ for the norm on $X^*$. It is straightforward to 
verify 
\begin{equation}\label{jp}
{\cal J}_p(u)=\left\{\xi\in X^*: \langle \xi, u\rangle=\frac{1}{p}\| u\| ^p+\frac{1}{q}\| \xi\| _*^q\right\}=\big\{\xi\in X^*: \| \xi\| _*^q=\| u\| ^p\big\},
\end{equation}
see for instance equation (1.4.6) in \cite{AGS}.  We also remark that the Hahn-Banach Theorem implies $\Jp(u)\neq\emptyset$ for each $u\in X$.

\par For some functionals $\Phi$, any two minimizers of $\Phi(u)/\| u\| ^p$  are linearly dependent. In the motivating example above, where $X=\R^n$ equipped with the Euclidean norm and $\Phi(u)=\frac{1}{2}Au\cdot u$, this would amount to the eigenspace of the first eigenvalue of $A$ being one dimensional. This observation leads to the following definition and terminology, which is central to the main assertions of this work. 
% Definition of simplicity 
\begin{defn}
$\lambda_p$ defined in \eqref{LamPee} is said to be {\it simple} if 
$$
\lambda_p=\frac{p\Phi(u)}{\| u\| ^p}=\frac{p\Phi(v)}{\| v\| ^p},
$$
for $u,v\in X\setminus\{0\}$, implies that $u$ and $v$ are linearly dependent. 
\end{defn} 

\par In analogy with \eqref{InvIterMat}, we will study the {\it inverse iteration} scheme: for $u_0\in X$
\begin{equation}\label{InverseIt}
\partial \Phi(u_k)- \Jp(u_{k-1})\ni 0, \quad k\in \N.
\end{equation}
We will see below that solutions of this scheme exist and satisfy  
$$
\frac{p\Phi(u_k)}{\| u_k\| ^p}\le \frac{p\Phi(u_{k-1})}{\| u_{k-1}\| ^p}\quad \text{and}\quad \frac{\| u_{k}\| }{\| u_{k+1}\| }\le \frac{\| u_{k-1}\| }{\| u_k\| }
$$
for each $k\in \N$ provided $u_0\in\dom(\Phi)\setminus\{0\}$. An important number that is related to this scheme and that will appear throughout this paper is 
$$
\mu_p:=\lambda_p^{\frac{1}{p-1}}.
$$
The following theorem asserts that if we scale $u_k$ by appropriate powers of $\mu_p$, the resulting sequence converges to a minimizer of $\Phi(u)/\| u\| ^p$. 
% Main Theorem 1
\begin{thm}\label{InvItThm}
Assume that $\lambda_p$ is simple and that $(u_k)_{k\in \N}$ satisfies \eqref{InverseIt} with $u_0\in X$. Then the limit $w:=\lim_{k\rightarrow \infty}\mu_{p}^{k}u_k$ exists and $w\in \dom(\Phi)$. Moreover, 
$$
\Phi(w)=\lim_{k\rightarrow \infty}\Phi(\mu_{p}^{k}u_k).
$$
If $w\neq 0$, $w$ is a minimizer of $\Phi(u)/\| u\| ^p$,  
$$
\lambda_p=\lim_{k\rightarrow \infty}\frac{p\Phi(u_k)}{\| u_k\| ^p},\quad \text{and}\quad \mu_p=\lim_{k\rightarrow \infty}\frac{\| u_{k-1}\| }{\| u_k\| }.
$$
\end{thm}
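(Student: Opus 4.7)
The plan is to exploit the degree-$p$ homogeneity of $\Phi$ and $\Jp$ to recast \eqref{InverseIt} as a rescaled iteration with fixed ``eigenvalue'' $\lambda_p$, to extract compactness from monotonicity of the Rayleigh quotient and norm ratios, and then to identify the limit by combining strict convexity of $\Phi$ with simplicity of $\lambda_p$.

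Setting $v_k := \mu_p^k u_k$, positive homogeneity of degree $p-1$ of both $\partial\Phi$ and $\Jp$ (the latter from \eqref{jp}), together with $\mu_p^{p-1}=\lambda_p$, converts \eqref{InverseIt} into the rescaled inclusion
\[
\partial\Phi(v_k) - \lambda_p\Jp(v_{k-1}) \ni 0.
\]
Picking $\xi_k \in \partial\Phi(u_k)\cap\Jp(u_{k-1})$, Euler's identity for homogeneous functions gives $p\Phi(u_k) = \langle\xi_k,u_k\rangle \leq \|\xi_k\|_* \|u_k\| = \|u_{k-1}\|^{p-1}\|u_k\|$, which combined with $\lambda_p\|u_k\|^p \leq p\Phi(u_k)$ yields $\|u_{k-1}\|/\|u_k\| \geq \mu_p$; hence $\|v_k\|$ is nonincreasing and converges to some $\ell\geq 0$. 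Monotonicity of the Rayleigh quotient $p\Phi(v_k)/\|v_k\|^p$ bounds $\Phi(v_k)$, and compactness of the sublevel sets of $\Phi$ yields relative compactness of $(v_k)$.

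If $\ell = 0$, then $v_k \to 0 =: w$ in norm and convergence of $\Phi(v_k)\to 0=\Phi(w)$ follows from the Rayleigh quotient bound. If $\ell > 0$, then $\|v_k\|/\|v_{k-1}\|\to 1$, so any subsequential limit $v_{k_n}\to w$ is accompanied (after passing to a further subsequence) by $v_{k_n-1}\to w'$ with $\|w\|=\|w'\|=\ell$. Selecting $\eta_{k_n}\in\partial\Phi(v_{k_n}) \cap \lambda_p\Jp(v_{k_n-1})$, which is uniformly bounded in $X^*$, and passing to the limit using sequential closedness of the graphs of $\partial\Phi$ and $\Jp$, I obtain $\partial\Phi(w)\ni \lambda_p\eta$ for some $\eta\in\Jp(w')$. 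Lower semicontinuity of $\Phi$, Euler's identity, and the variational characterization of $\lambda_p$ then force $p\Phi(w)/\|w\|^p=\lambda_p$, so $w$ is a minimizer; strict convexity of $\Phi$, which makes the map $\eta\mapsto u$ solving $\partial\Phi(u)\ni\eta$ single-valued, yields $w=w'$.

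By simplicity, any two nonzero subsequential limits of $(v_k)$ are proportional, and having common norm $\ell$ they agree up to sign; possible sign oscillation is ruled out by the continuity of the iteration map induced by strict convexity, which propagates subsequential convergence to the full sequence. Thus $v_k \to w$, and the remaining assertions follow by combining the already established monotonicity with the sharpened estimate $p\Phi(u_k)/\|u_k\|^p\leq (\|u_{k-1}\|/\|u_k\|)^{p-1}$, which when $w\neq 0$ forces $p\Phi(u_k)/\|u_k\|^p \to \lambda_p$ and $\|u_{k-1}\|/\|u_k\|\to\mu_p$. The main obstacle is the third step: showing that the subsequential limits of $v_{k_n}$ and $v_{k_n - 1}$ actually coincide and constitute a minimizer, which requires carefully threading closed-graph arguments for the subdifferentials together with lower semicontinuity and strict convexity to pin down the fixed-point relation $\partial\Phi(w)\ni\lambda_p\Jp(w)$.
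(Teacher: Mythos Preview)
Your overall architecture is right: rescale to $v_k=\mu_p^k u_k$, use monotonicity of $\|v_k\|$ and of the Rayleigh quotient to get compactness, and identify subsequential limits as minimizers. The step showing that a cluster point $w$ satisfies $p\Phi(w)=\lambda_p\|w\|^p$ is essentially the paper's argument as well. The claim $w=w'$ for consecutive-index limits is not quite justified by ``strict convexity makes $\eta\mapsto u$ single-valued''; what is actually needed is the paper's Corollary~\ref{discreteUniqueness} (if $w'$ is a minimizer then the \emph{only} solution of $\partial\Phi(u)-\lambda_p\Jp(w')\ni 0$ is $u=w'$), and this in turn uses simplicity and strict convexity together to exclude $u=-w'$. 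That is patchable.

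The genuine gap is your treatment of sign oscillation. Knowing that the iteration map is continuous and that $v_{k_n}\to w$ lets you conclude $v_{k_n+m}\to w$ for each \emph{fixed} $m$ (this is exactly the paper's claim~\eqref{Thm1Clm}), but it does \emph{not} propagate to the full sequence: continuity gives no uniform control as $m\to\infty$, so nothing prevents a second subsequence $v_{m_j}$ with $m_j-k_j\to\infty$ from drifting over to $-w$. A fixed point of a merely continuous map can have orbits with multiple cluster points; one needs a trapping-region argument, not just continuity. Note also that $\Jp$ need not be single-valued, so speaking of ``the iteration map'' already requires care.

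The paper closes this gap with a separate device: the projection quantity $\alpha_w(u)=\inf\{\alpha\ge 0:\alpha w\in P_w(u)\}$ and Lemma~\ref{SignLem}, which says that once the Rayleigh quotient is within $\delta$ of $\lambda_p$ and $\alpha_w(v_k)\ge\tfrac12$, one step of the (rescaled) iteration preserves $\alpha_w\ge\tfrac12$. This gives, by induction on $m$, the uniform bound $\alpha_w(v_{k_j+m})\ge\tfrac12$ for all $m$ and all large $j$; since $\alpha_w(-w)=0$, any competing subsequence converging to $-w$ is then impossible. You have correctly located the main obstacle one step too early: it is not pinning down the fixed-point relation, but excluding the sign flip, and that requires a quantitative one-step stability lemma rather than soft continuity.
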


\par Next we will present a convergence result for a flow analogous to the differential equation \eqref{linearODE}.  To this 
end, we will study the large time behavior of solutions of the doubly nonlinear evolution
\begin{equation}\label{mainDNEold}
\Jp(\dot v(t))+\partial\Phi(v(t))\ni 0 \quad (a.e.\; t>0)
\end{equation}
The main reason we will study this flow is that the function 
$$
t\mapsto \frac{p\Phi(v(t))}{\| v(t)\| ^p}
$$
is nonincreasing on any interval of time for which it is defined. 

\par However, instead of restricting our attention to paths $v:[0,\infty)\rightarrow X$ that  
satisfy \eqref{mainDNEold} at almost every $t>0$, we will study $p$-curves of maximal slope for $\Phi$; see Definition \ref{pCurveDef} below. These are
locally absolutely continuous paths such that $t\mapsto\Phi(v(t))$ decreases as much as possible in the sense of the chain rule.  It turns out that $p$-curves of 
maximal slope for $\Phi$ satisfy \eqref{mainDNEold} when they are differentiable and they have been shown to exist in general Banach spaces
for any prescribed $v(0)\in\dom(\Phi)$ (Chapter 1--3 of \cite{AGS}). While most of the Banach spaces $X$ we have in mind satisfy the Radon-Nikodym property,
which guarantees the almost everywhere differentiability of absolutely continuous paths (Chapter VII, Section 6 of \cite{DieUhl}), our proof does not rely 
on this assumption. 

% Main Theorem 2
\begin{thm}\label{DNEthm} Assume that $\lambda_p$ is simple and that $v$ is a $p$-curve of maximal slope for $\Phi$ with $v(0)\in\dom(\Phi)$. Then the limit $w:=\lim_{t\rightarrow\infty}e^{\mu_p t}v(t)$ exists and 
$w\in \dom(\Phi)$. Moreover, 
$$
\Phi(w)=\lim_{t\rightarrow\infty}\Phi(e^{\mu_p t}v(t)).
$$
If $w\neq 0$, then $w$ is a minimizer of $\Phi(u)/\| u\| ^p$ and 
$$
\lambda_p=\lim_{t\rightarrow\infty}\frac{p\Phi(v(t))}{\| v(t)\| ^p}.
$$
\end{thm}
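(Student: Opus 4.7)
My plan is to adapt the inverse iteration argument of Theorem \ref{InvItThm} to the continuous-time setting by studying the rescaled curve $\tilde v(t):=e^{\mu_p t}v(t)$. The starting point is the energy identity available for $p$-curves of maximal slope for convex lsc $\Phi$,
$$
\Phi(v(s))-\Phi(v(t))=\int_s^t |\partial\Phi|^q(v(r))\,dr,\qquad 0\le s\le t,
$$
where $|\partial\Phi|(v):=\min\{\|\xi\|_*:\xi\in\partial\Phi(v)\}$. For any $\xi\in\partial\Phi(v)$, the $p$-homogeneity of $\Phi$ yields the Euler identity $\langle\xi,v\rangle=p\Phi(v)$, so that H\"older combined with $\|v\|^p\le p\Phi(v)/\lambda_p$ gives the pointwise bound
$$
|\partial\Phi|^q(v)\ge p\mu_p\Phi(v),
$$
with equality exactly when $R(v):=p\Phi(v)/\|v\|^p=\lambda_p$. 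Substituting this into the energy identity and invoking $p$-homogeneity shows that $t\mapsto \Phi(\tilde v(t))=e^{p\mu_p t}\Phi(v(t))$ is nonincreasing and hence converges to some $\Phi_\infty\in[0,\Phi(v(0))]$.

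Next, the compactness of sublevel sets confines $\{\tilde v(t)\}_{t\ge 0}$ to a fixed compact set, so the $\omega$-limit set
$$
\Omega:=\bigcap_{T\ge 0}\overline{\{\tilde v(t):t\ge T\}}
$$
is nonempty, compact, and connected. Since $R(v(t))=R(\tilde v(t))$ is nonincreasing and bounded below by $\lambda_p$, it converges to some $\lambda_\infty\ge\lambda_p$. The sharper bound $|\partial\Phi|^q(v)\ge pR(v)^{1/(p-1)}\Phi(v)$ (same H\"older computation) forces $\Phi(\tilde v(t))\to 0$ if $\lambda_\infty>\lambda_p$, in which case $\|\tilde v(t)\|\to 0$ and $\Omega=\{0\}$. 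In the alternative case $\lambda_\infty=\lambda_p$, lower semicontinuity of $\Phi$ together with continuity of $\|\cdot\|$ gives $R(\bar w)\le\lambda_p$ at any nonzero $\bar w\in\Omega$, and hence $\bar w$ is a minimizer of the Rayleigh quotient.

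Now I would invoke simplicity. Fix any nonzero minimizer $w_0$; then $\Omega\subseteq\{0\}\cup\{tw_0:t\in\R\setminus\{0\}\}$. From the identity $\|\tilde v(t)\|^p=p\Phi(\tilde v(t))/R(v(t))$ together with $\Phi(\tilde v(t))\to\Phi_\infty$ and $R(v(t))\to\lambda_p>0$, it follows that $\|\tilde v(t)\|\to L:=(p\Phi_\infty/\lambda_p)^{1/p}$, so every element of $\Omega$ has norm $L$. Thus $\Omega\subseteq\{\pm(L/\|w_0\|)w_0\}$, which is a two-point set; the connectedness of $\Omega$ then collapses it to a single point $w$. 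This yields existence of the limit $w=\lim_{t\to\infty}\tilde v(t)$, which is either $0$ (when $L=0$) or a minimizer. The assertions concerning $\Phi(w)$ and $\lim R(v(t))$ are then direct consequences of the monotonicity of $\Phi(\tilde v(t))$ and the identification $\lambda_\infty=\lambda_p$.

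The principal obstacle I anticipate is that $p$-curves of maximal slope need not be differentiable everywhere, and $\partial\Phi(v(t))$ need not be single-valued, so the pointwise differentiations of $\Phi\circ v$ and $t\mapsto\Phi(\tilde v(t))$ sketched above are only formal. I would handle this by working systematically with the integrated energy identity and the metric slope $|\partial\Phi|$ from the framework in \cite{AGS}, replacing each differentiation step by its integrated counterpart, so that the monotonicity of $\Phi(\tilde v(t))$ and the exponential decay $\Phi(v(t))\le\Phi(v(0))e^{-p\mu_p t}$ are derived directly from the integrated form rather than from substituting the DNE into a chain rule.
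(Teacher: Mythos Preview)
Your argument is correct, and it takes a genuinely different route from the paper's proof.

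The paper proves Theorem~\ref{DNEthm} by first shifting the curve, setting $w^k(t):=e^{\mu_p s_k}v(t+s_k)$ for a sequence $s_k\to\infty$, and invoking the curve-compactness Lemma~\ref{CompactLem} to extract a subsequence converging (locally uniformly, with matching energies) to another $p$-curve of maximal slope $w$. It then identifies $w$ as a separable solution $w(t)=e^{-\mu_p t}w(0)$ via Corollary~\ref{UniquenessCor}. Uniqueness of the full limit is obtained through the sign-stability Lemma~\ref{SignLem2} and an induction over the unit intervals $[\ell,\ell+1]$: one shows $\alpha_{w(0)}(e^{\mu_p t}v(t))\ge\tfrac12$ for all large $t$, which prevents any other subsequence from converging to $-w(0)$.

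Your approach bypasses Lemmas~\ref{CompactLem} and~\ref{SignLem2} entirely. Once you have the monotonicity of $\Phi(\tilde v(t))$ and of $R(v(t))$ (these are Corollary~\ref{ScaledDecrease} and Proposition~\ref{MonRayleighGo} in the paper, and you correctly sketch how to recover them from the integrated energy identity without pointwise differentiability), the convergence $\|\tilde v(t)\|\to L$ follows from the algebraic identity $\|\tilde v(t)\|^p=p\Phi(\tilde v(t))/R(v(t))$. Simplicity then pins the $\omega$-limit set inside a two-point set $\{\pm(L/\|w_0\|)w_0\}$, and connectedness of the $\omega$-limit set of a continuous orbit with precompact range collapses it to a singleton. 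This is more elementary and avoids the projection machinery $\alpha_w$ altogether.

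What the paper's approach buys is uniformity with the discrete inverse-iteration Theorem~\ref{InvItThm}: there is no continuous orbit in that setting, so no connected $\omega$-limit set is available, and the sign-lemma/induction mechanism (Lemma~\ref{SignLem}) is genuinely needed there. The paper simply reuses the same mechanism in continuous time. Your connectedness argument, by contrast, exploits a feature specific to the continuous flow. One small point worth making explicit in a write-up: handle the degenerate case $v(t_0)=0$ (equivalently $\Phi_\infty=0$) before invoking monotonicity of $R(v(t))$, since $R$ is then undefined; this is immediate from $\|\tilde v\|^p\le p\Phi(\tilde v)/\lambda_p\to 0$.
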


\begin{rem} Without the assumption that $\lambda_p$ is simple, weaker versions of Theorem \ref{InvItThm} and Theorem \ref{DNEthm} still hold. See Remarks \ref{invnotsimple} and \ref{evolnotsimple}.
\end{rem}

\par Our primary motivation for this work was in approximating optimal constants and extremal functions in various Sobolev inequalities.  See Table \ref{RayleighTable} for the examples we will apply our results to. In each case and throughout this paper, $\Omega\subset\R^n$ is bounded, open and connected with $C^1$ boundary $\partial\Omega$; the mapping  $T: W^{1,p}(\Omega)\rightarrow L^p(\partial\Omega; \sigma)$ is the Sobolev trace operator and $\sigma$ is $n-1$ dimensional Hausdorff measure.  The organization of this paper is as follows. In Section \ref{prelim}, we will present some preliminary information and discuss examples.  Then we will prove Theorem \ref{InvItThm} and Theorem \ref{DNEthm} in Sections \ref{IterSec} and \ref{EvolSec}, respectively.

\begin{center}
\begin{table}[h!tb]
\begin{tabular}{ccl} 
\toprule Rayleigh Quotient & Space & Functional Inequality \\ 
\midrule 
  $\displaystyle \frac{\displaystyle\int_\Omega|Du|^pdx}{\displaystyle\int_\Omega|u|^pdx}$ & $W^{1,p}_0(\Omega)$ & $\displaystyle\lambda_p\int_\Omega|u|^pdx\le \int_\Omega|Du|^pdx$  \\\\
 $\displaystyle \frac{\displaystyle\iint_{\R^n\times\R^n}\frac{|u(x)-u(y)|^p}{|x-y|^{n+ps}}dxdy}{\displaystyle\int_\Omega|u|^pdx}$ & $W^{s,p}_0(\Omega)$ & $\displaystyle\lambda_p\int_\Omega|u|^pdx\le \iint_{\R^n\times\R^n}\frac{|u(x)-u(y)|^p}{|x-y|^{n+ps}}dxdy$  \\\\
$\displaystyle \frac{\displaystyle\int_\Omega|Du|^pdx+\beta\int_{\partial\Omega}|Tu|^pd\sigma}{\displaystyle\int_\Omega|u|^pdx}$ & $W^{1,p}(\Omega)$ & $\displaystyle\lambda_p\int_\Omega|u|^pdx\le \int_\Omega|Du|^pdx+\beta\int_{\partial\Omega}|Tu|^pd\sigma$\\\\
$\displaystyle \frac{\displaystyle\int_\Omega|Du|^pdx}{\displaystyle\inf_{c\in\R}\int_\Omega|u+c|^pdx}$ & $W^{1,p}(\Omega)$ & $\displaystyle\lambda_p\inf_{c\in\R}\int_\Omega|u+c|^pdx\le \int_\Omega|Du|^pdx$  \\\\
$\displaystyle \frac{\displaystyle\int_\Omega|Du|^pdx}{\displaystyle\text{ess}\sup_{\Omega}|u|^p}$ & $W^{1,p}_0(\Omega)\; (p>n)$ & $\displaystyle\lambda_p\text{ess}\sup_{\Omega}|u|^p\le \int_\Omega|Du|^pdx$  \\\\
$\displaystyle\frac{\displaystyle\int_\Omega(|Du|^p+|u|^p)dx}{\displaystyle\int_{\partial\Omega}|Tu|^pd\sigma}$ & $W^{1,p}(\Omega)$ & $\displaystyle\lambda_p\int_{\partial\Omega}|Tu|^pd\sigma\le \int_\Omega\left(|Du|^p+|u|^p\right)dx$\\\\
\bottomrule 
\end{tabular}
\caption{Rayleigh quotients in Sobolev spaces}
\label{RayleighTable}
\end{table}
\end{center}

%%%%%%%%%%%%%%%%%%%%%%%% Preliminaries %%%%%%%%%%%%%%%%%%%%%%%%%%%
\section{Preliminaries}\label{prelim}
In this section, we will show that the functional
$\Phi(u)/\| u\| ^p$ has a minimizer $w\in X\setminus\{0\}$ that satisfies 
\begin{equation}\label{eigenvalueLamP}
\partial\Phi(w)-\lambda_p {\cal J}_p(w)\ni 0.
\end{equation}
Note that \eqref{eigenvalueLamP} holds if there are $\xi\in\partial\Phi(w)$, $\zeta\in\Jp(w)$ such that $\xi-\lambda_p\zeta=0$ or equivalently if 
$\partial\Phi(w)\cap \lambda_p {\cal J}_p(w)\neq \emptyset$.  Next we shall discuss the projection of elements of $X$ onto rays in the direction of nonzero vectors. Finally, 
we will present some examples of homogeneous functionals on Hilbert and Sobolev spaces that will be revisited throughout this work.

\par Let us first begin with a basic fact about the degree $p$ homogeneous convex functionals $\Phi$ we are considering.  For each $u\in\dom(\Phi)$ and $\zeta\in\partial\Phi(u)$,
\begin{equation}\label{Euler}
p\Phi(u)=\langle \zeta,u\rangle. 
\end{equation}
(Lemma 3.9 in \cite{Arai}). However, the following proposition, which is equivalent to \eqref{Euler}, will be more useful to us.
\begin{prop}
Assume $u,v\in\dom(\Phi)$ and $\zeta\in \partial\Phi(u)$. Then
\begin{equation}\label{CSIneq}
\langle \zeta, v\rangle \le [p\Phi(u)]^{1-\frac{1}{p}}[p\Phi(v)]^{\frac{1}{p}}.
\end{equation}
Equality holds in \eqref{CSIneq} if and only if $u$ and $v$ are linearly dependent. 
\end{prop}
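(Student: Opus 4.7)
The plan is to extract \eqref{CSIneq} from the subdifferential inequality evaluated at the test point $tv$ for an appropriate $t > 0$, combined with Euler's identity \eqref{Euler}. Concretely, since $\zeta \in \partial\Phi(u)$, we have
$$
\Phi(tv) \ge \Phi(u) + \langle \zeta, tv - u\rangle
$$
for every $t \ge 0$. Using the homogeneity $\Phi(tv) = t^p\Phi(v)$ together with \eqref{Euler}, which gives $\langle \zeta, u\rangle = p\Phi(u)$, this rearranges to the one-parameter family of inequalities
$$
t\langle \zeta, v\rangle \le t^p\Phi(v) + (p-1)\Phi(u) \qquad (t \ge 0).
$$

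The next step is to optimize over $t > 0$. In the main case $\Phi(v) > 0$, the choice $t = (\Phi(u)/\Phi(v))^{1/p}$ balances the two terms on the right, producing $t^p\Phi(v) + (p-1)\Phi(u) = p\Phi(u)$; dividing through by this $t$ yields exactly
$$
\langle \zeta, v\rangle \le p\,\Phi(u)^{1-1/p}\Phi(v)^{1/p} = [p\Phi(u)]^{1-1/p}[p\Phi(v)]^{1/p},
$$
as desired. (Equivalently, this is a straight application of Young's inequality $xy \le x^p/p + y^q/q$ with $x = t[p\Phi(v)]^{1/p}$ and $y = [p\Phi(u)]^{1/q}$, which illuminates why the RHS has the asserted form.) The degenerate cases where $\Phi(u) = 0$ or $\Phi(v) = 0$ force $u = 0$ or $v = 0$ by strict convexity of $\Phi$ on $\dom(\Phi)$ together with homogeneity, and in those cases both sides of \eqref{CSIneq} are easily seen to vanish (or the LHS is $\le 0$), so \eqref{CSIneq} holds trivially.

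For the equality case, the key observation is that equality in \eqref{CSIneq} (assuming $\Phi(u),\Phi(v) > 0$) forces equality in the subgradient inequality at the specific test point $w_\ast := tv$ with $t = (\Phi(u)/\Phi(v))^{1/p}$, i.e.
$$
\Phi(w_\ast) = \Phi(u) + \langle \zeta, w_\ast - u\rangle.
$$
Strict convexity of $\Phi$ on its domain implies that an affine function supporting $\Phi$ at $u$ can touch the graph at most at one point; hence $w_\ast = u$, which means $u = tv$, proving linear dependence. Conversely, if $v = s u$ for some $s \in \R$, homogeneity and \eqref{Euler} allow one to compute both sides of \eqref{CSIneq} explicitly and confirm equality.

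The main obstacle is verifying that strict convexity really does yield the single-touching-point property used in the equality case, but this follows by a short argument: if the supporting affine function at $u$ also agreed with $\Phi$ at some $w_\ast \neq u$, then at the midpoint $(u+w_\ast)/2$ the affine lower bound equals the average of $\Phi(u)$ and $\Phi(w_\ast)$, contradicting strict convexity. The remainder of the argument is a direct calculation, so no deeper technical hurdle is anticipated.
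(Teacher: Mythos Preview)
Your argument is essentially the paper's: the paper normalizes to $p\Phi(u)=p\Phi(v)=1$ and then applies the subgradient inequality at $v$, whereas you apply it at $tv$ and optimize over $t$ --- but your optimal $t=(\Phi(u)/\Phi(v))^{1/p}$ is exactly the choice making $\Phi(tv)=\Phi(u)$, so the two computations coincide, and your equality analysis via the single-touching-point property is the same strict-convexity observation the paper uses. One small caveat: your converse ``if $v=su$ for some $s\in\R$ then equality holds'' only checks out for $s\ge 0$ (for $s<0$ the left side is negative while the right is positive), though the paper's own proof of this direction is equally loose and in practice only positive proportionality is established.
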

\begin{proof}
If $u=0$, $\Phi(0)=0$ and
$$
t^p\Phi(v)=\Phi(tv)\ge \langle \zeta,tv\rangle= t \langle \zeta,v\rangle
$$
for each $t>0$. Dividing by $t$ and sending $t\rightarrow 0^+$, gives $\langle \zeta,v\rangle\le 0$. Therefore, the claim holds for $u=0$ and it trivially holds for $v=0$, so we assume otherwise. 

\par Suppose that $p\Phi(u)=p\Phi(v)=1$. Then by \eqref{Euler} and the convexity of $\Phi$
\begin{align*}
\langle \zeta, v\rangle&=\langle \zeta, v-u+u\rangle\\
&=\langle \zeta, v-u\rangle+\langle\zeta,u\rangle\\
&=\langle \zeta, v-u\rangle+p\Phi(u)\\
&\le \Phi(v)-\Phi(u)+p\Phi(u)\\
&=\frac{1}{p}(p\Phi(v))+\left(1-\frac{1}{p}\right)(p\Phi(u))\\
&=1. 
\end{align*}
The inequality above is strict when $u\neq v$, as $\Phi$ is strictly convex on its domain. 

\par In general, we observe that if $\zeta\in \partial\Phi(u)$, then the homogeneity of $\Phi$ implies 
that $\zeta/c^{p-1}\in \partial\Phi(u/c)$ for each $c>0$. Therefore, 
$$
\left\langle \frac{\zeta}{[p\Phi(u)]^{1-\frac{1}{p}}}, \frac{v}{[p\Phi(v)]^{\frac{1}{p}}}\right\rangle \le 1
$$
by the above computation. Hence, we conclude the assertion \eqref{CSIneq}. Equality occurs only when $u/[p\Phi(u)]^{1/p}=v/[p\Phi(v)]^{1/p}$; 
that is, when $u$ and $v$ are linearly dependent. 
\end{proof}

\par Now we shall argue that at least one minimizer $w\neq 0$ of $\Phi(u)/\| u\| ^p$ exists.  It would then follow that $\lambda_p>0$ and 
therefore
\begin{equation}\label{Poincare}
\lambda_p\| u\| ^p\le p\Phi(u), \quad u\in X. 
\end{equation} 
\begin{prop}
There exists $w\in X\setminus\{0\}$ for which 
\begin{equation}\label{OptUzero}
\lambda_p=p\frac{\Phi(w)}{\| w\| ^p}.
\end{equation}
Moreover, $\lambda_p>0$ and thus \eqref{Poincare} holds. 

\end{prop}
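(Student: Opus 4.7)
The plan is to apply the direct method of the calculus of variations. First I would take a minimizing sequence $(u_n)\subset X\setminus\{0\}$ for the Rayleigh quotient, so that $p\Phi(u_n)/\|u_n\|^p\to \lambda_p$. Using the degree $p$ homogeneity together with the identity $\Phi(tu)=t^p\Phi(u)$, I would rescale by setting $v_n:=u_n/\|u_n\|$, so that $\|v_n\|=1$ and $p\Phi(v_n)=p\Phi(u_n)/\|u_n\|^p\to\lambda_p$. In particular, $(\Phi(v_n))$ is eventually bounded, say by some constant $M$.

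Next I would extract a convergent subsequence. Since $\Phi$ has compact sublevel sets, the set $\{u\in X:\Phi(u)\le M\}$ is compact, so a subsequence (not relabelled) converges to some $w\in X$. Continuity of the norm gives $\|w\|=1$, so in particular $w\neq 0$. Lower semicontinuity of $\Phi$ yields
$$
p\Phi(w)\le \liminf_{n\to\infty} p\Phi(v_n)=\lambda_p,
$$
so $w\in \dom(\Phi)$ and $p\Phi(w)/\|w\|^p\le \lambda_p$. Because $\lambda_p$ is the infimum, equality must hold, which gives \eqref{OptUzero}.

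Finally I would show $\lambda_p>0$, equivalently $\Phi(w)>0$, by exploiting strict convexity. Since $\Phi$ is degree $p$ homogeneous with $p>1$, taking $t=0$ in $\Phi(tw)=t^p\Phi(w)$ shows $\Phi(0)=0$, so $0\in\dom(\Phi)$. If $\Phi(w)=0$ while $w\neq 0$ and $w\in\dom(\Phi)$, then strict convexity of $\Phi$ on its domain gives
$$
\Phi\!\left(\tfrac{1}{2}w\right)<\tfrac{1}{2}\Phi(0)+\tfrac{1}{2}\Phi(w)=0,
$$
contradicting $\Phi\ge 0$. Hence $\Phi(w)>0$ and $\lambda_p=p\Phi(w)/\|w\|^p>0$, which together with the definition of $\lambda_p$ yields the Poincar\'{e}-type inequality \eqref{Poincare}.

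The main obstacle is ensuring that the minimizing sequence does not degenerate to zero in the limit. The normalization $\|v_n\|=1$ is what prevents this, but it depends crucially on the compactness of sublevel sets being compatible with the norm topology (so that $\|\cdot\|$ is continuous along the extracted subsequence). Once this is in place, the rest of the argument is a clean application of the direct method combined with strict convexity to rule out the trivial case $\lambda_p=0$.
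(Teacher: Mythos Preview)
Your proof is correct and follows essentially the same approach as the paper: normalize a minimizing sequence to the unit sphere, use compactness of sublevel sets to extract a convergent subsequence, and apply lower semicontinuity. Your justification that $\Phi(w)>0$ via the midpoint strict convexity inequality is a slightly more explicit version of what the paper leaves as the one-line remark ``since $w\neq 0$ and $\Phi$ is strictly convex with $\Phi(0)=0$, $\Phi(w)>0$.''
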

\begin{proof}
Suppose $(u^k)_{k\in \N}$ is a minimizing sequence for $\lambda_p$. Without any loss of generality, we may assume $u^k\in\text{dom}(\Phi)$, $u^k\neq 0$ for each $k\in \N$ and 
$$
\lambda_p=\lim_{k\rightarrow\infty}p\frac{\Phi(u^k)}{\| u^k\| ^p}.
$$
Set $v^k:=u^k/\| u_k\| $, and notice $\| v^k\| =1$ and $\sup_{k\in \N}\Phi(v^k)<\infty$. By the compactness of the sublevel sets of $\Phi$, there is a subsequence 
$(v^{k_j})_{j\in \N}$ that converges to some $w\in X$ that satisfies $\| w\| =1$. Since $\Phi$ is degree $p$ homogeneous and lower semicontinuous, 
$$
\lambda_p=\lim_{j\rightarrow\infty}p\frac{\Phi(u^{k_j})}{\| u^{k_j}\| ^p}=\lim_{j\rightarrow\infty}p\Phi\left(\frac{u^{k_j}}{\| u^{k_j}\| }\right)=\lim_{j\rightarrow\infty}p\Phi(v^{k_j})=\liminf_{j\rightarrow\infty}p\Phi(v^{k_j})\ge p\Phi(w).
$$
Thus, $w$ satisfies \eqref{OptUzero}. Since $w\neq 0$ and $\Phi$ is strictly convex with $\Phi(0)=0$, $\Phi(w)>0$. Thus, $\lambda_p>0$. 
\end{proof}

\begin{prop}
An element $w\in X\setminus\{0\}$ satisfies \eqref{OptUzero} if and only if $w$ satisfies \eqref{eigenvalueLamP}.
\end{prop}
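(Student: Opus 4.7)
The plan is to handle the two implications separately, with the easier backward direction coming first and the forward direction pivoting on \eqref{Poincare} together with a difference-of-convex trick.

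For the backward direction, I would assume \eqref{eigenvalueLamP} and pick $\zeta\in\partial\Phi(w)$, $\eta\in\Jp(w)$ with $\zeta=\lambda_p\eta$. The Euler identity \eqref{Euler} applied to $\Phi$ gives $p\Phi(w)=\langle\zeta,w\rangle=\lambda_p\langle\eta,w\rangle$. The explicit characterization \eqref{jp} forces
\[
\langle\eta,w\rangle=\tfrac{1}{p}\|w\|^p+\tfrac{1}{q}\|\eta\|_*^q=\|w\|^p,
\]
since $\|\eta\|_*^q=\|w\|^p$. Hence $p\Phi(w)=\lambda_p\|w\|^p$, which is \eqref{OptUzero}.

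For the forward direction, I would assume $p\Phi(w)=\lambda_p\|w\|^p$. Combined with \eqref{Poincare}, this realizes $w$ as a global minimizer of $u\mapsto\Phi(u)-\frac{\lambda_p}{p}\|u\|^p$, a nonnegative functional vanishing at $w$. The key step is to manufacture a subgradient of $\Phi$ at $w$ from any $\eta\in\Jp(w)$ (and such $\eta$ exists by the Hahn--Banach consequence recorded just after \eqref{jp}). Since $\eta$ is by definition a subgradient of $\frac{1}{p}\|\cdot\|^p$ at $w$, I would chain the resulting subgradient inequality, scaled by $\lambda_p$, with $\Phi\ge\frac{\lambda_p}{p}\|\cdot\|^p$ and $\Phi(w)=\frac{\lambda_p}{p}\|w\|^p$ to get, for every $v\in X$,
\[
\Phi(v)\ge\tfrac{\lambda_p}{p}\|v\|^p\ge\tfrac{\lambda_p}{p}\|w\|^p+\lambda_p\langle\eta,v-w\rangle=\Phi(w)+\langle\lambda_p\eta,v-w\rangle.
\]
This exhibits $\lambda_p\eta\in\partial\Phi(w)$, and since $\lambda_p\eta\in\lambda_p\Jp(w)$ by construction, $0=\lambda_p\eta-\lambda_p\eta\in\partial\Phi(w)-\lambda_p\Jp(w)$.

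The subtle point I anticipate is that one should not have to verify a priori that $\partial\Phi(w)\neq\emptyset$ or invoke any delicate subdifferential calculus (the usual obstacle when trying to write KKT conditions for the non-convex constraint $\|u\|=\|w\|$). The difference-of-convex argument above side-steps this entirely by \emph{producing} an explicit subgradient of $\Phi$ at $w$ from the nonempty duality set $\Jp(w)$, so neither the strict convexity of $\Phi$ nor the compactness of its sublevel sets is needed here.
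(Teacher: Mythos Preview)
Your proof is correct and essentially identical to the paper's: both directions use exactly the same chain of (in)equalities, with the backward direction via \eqref{Euler} and \eqref{jp}, and the forward direction via \eqref{Poincare} together with the subgradient inequality for $\frac{1}{p}\|\cdot\|^p$ at $w$. The only cosmetic difference is that the paper treats the forward direction first, and your framing in terms of a ``difference-of-convex'' minimizer is just a narrative gloss on the same inequality chain.
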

\begin{proof}
Suppose $w$ satisfies \eqref{OptUzero} and $\xi\in\Jp(w).$ Then for each $u\in X$
\begin{align*}
\Phi(u)&\ge \lambda_p\frac{1}{p}\| u\| ^p\\
&\ge \lambda_p\left(\frac{1}{p}\| w\| ^p +\langle \xi, u-w\rangle\right)\\
&=\frac{\lambda_p}{p}\| w\| ^p + \langle \lambda_p\xi, u-w\rangle\\
&=\Phi(w) + \langle \lambda_p\xi, u-w\rangle.
\end{align*}
Thus $\lambda_p\xi\in \partial\Phi(w)$.  Conversely, suppose that \eqref{eigenvalueLamP} holds and select $\xi \in\Jp(w), \zeta\in \partial \Phi(w)$ such that 
$0=\zeta-\lambda_p \xi$. By \eqref{Euler} and \eqref{jp}
\begin{align*}
p\Phi(w)&=\langle \zeta,w\rangle \\
&=\langle \lambda_p \xi,w\rangle\\
&=\lambda_p\langle  \xi,w\rangle\\
&=\lambda_p\| w\| ^p.
\end{align*}
\end{proof}
\begin{rem}
The first part of the proof above gives the slighter stronger implication: if $w\in X\setminus\{0\}$ satisfies \eqref{OptUzero}, then $\lambda_p {\cal J}_p(w)\subset \partial\Phi(w)$. The second part of the proof can be used to establish the following inequality. If $u\in\dom(\Phi)\setminus\{0\}$ satisfies 
$\partial\Phi(u)-\lambda {\cal J}_p(u)\ni 0$, then
$$
\lambda=p\frac{\Phi(u)}{\| u\| ^p}\ge \lambda_p.
$$
So in this sense, $\lambda_p$ is the ``smallest eigenvalue" of $\partial\Phi$. 
\end{rem}
An important tool in our convergence proofs will be the projection onto the ray $\{\beta w\in X: \beta\ge 0\}$ for a given $w\in X\setminus\{0\}$.  Let 
$$
P_w(u):=\left\{\alpha w \in X:  \alpha\ge 0\; \text{and}\; \| u-\alpha w\| \le \| u-\beta w\|\;\text{for all}\;\beta\ge 0 \right\}
$$
The following is a basic proposition.

\begin{prop}\label{ProjProp}
Assume $w\in X\setminus\{0\}$ and $u\in X$. \\
(i) The $P_w(u)$ is non-empty, compact, and convex.\\
(ii) For each $\gamma\in \R$, $P_w(\gamma w)=\{\gamma^+w\}$.\\
(iii) For $s> 0$, $P_w(su)=sP_w(u)$.
\end{prop}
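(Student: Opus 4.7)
The plan is to reduce all three statements to elementary facts about the real-valued function $f:[0,\infty)\to[0,\infty)$ given by $f(\beta):=\|u-\beta w\|$ and the set $M:=\{\beta\ge 0: f(\beta)=\min f\}$ of its minimizers. Clearly $P_w(u)=\{\alpha w:\alpha\in M\}$, so properties of $P_w(u)$ follow from properties of $M$ together with the fact that $\alpha\mapsto\alpha w$ is a continuous linear map $\R\to X$.

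For part (i), the triangle inequality gives $f(\beta)\ge\beta\|w\|-\|u\|$, so $f(\beta)\to\infty$ as $\beta\to\infty$. Since $w\ne 0$, $f$ is continuous, and $f(0)=\|u\|$ is finite, the minimum over $[0,\infty)$ is attained on a nonempty, closed, bounded subset of $[0,\infty)$. Convexity of the norm gives that for $\alpha_1,\alpha_2\in M$ and $t\in[0,1]$,
$$
\|u-(t\alpha_1+(1-t)\alpha_2)w\|\le t\|u-\alpha_1 w\|+(1-t)\|u-\alpha_2 w\|=\min f,
$$
so $t\alpha_1+(1-t)\alpha_2\in M$. Thus $M$ is nonempty, compact, and convex, and these three properties transfer to its continuous linear image $P_w(u)$. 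Note that we do not need — and do not claim — uniqueness of the projection, so only convexity of the norm (not strict convexity) is used.

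For part (ii), the identity $\|\gamma w-\beta w\|=|\gamma-\beta|\|w\|$ and the constraint $\beta\ge 0$ show that $f$ is uniquely minimized at $\beta=\gamma^+:=\max(\gamma,0)$, giving $P_w(\gamma w)=\{\gamma^+ w\}$. For part (iii), the scaling identity $\|su-\beta w\|=s\|u-(\beta/s)w\|$ valid for $s>0$ shows that $\alpha w\in P_w(u)$ if and only if $s\alpha w\in P_w(su)$, which is precisely $P_w(su)=sP_w(u)$.

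Each of the three parts is routine; there is no real obstacle, since the proposition is a packaging of standard facts about nearest-point projections onto a ray in a Banach space. The only minor point worth flagging is that the proof of (i) uses nothing beyond the triangle inequality and continuity of the norm — in particular, no reflexivity of $X$ or strict convexity of $\|\cdot\|$ — because the set $M$ is parametrized by a single real variable and the coercivity estimate $f(\beta)\ge\beta\|w\|-\|u\|$ reduces the problem to minimizing a continuous coercive function on $[0,\infty)$.
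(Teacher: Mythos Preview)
Your proof is correct and takes essentially the same approach as the paper's: both reduce to the scalar function $\beta\mapsto\|u-\beta w\|$, use coercivity and continuity for existence, the triangle inequality for convexity, and the identity $\|su-\beta w\|=s\|u-(\beta/s)w\|$ for part (iii). Your framing via the minimizing set $M\subset[0,\infty)$ and the linear map $\alpha\mapsto\alpha w$ is a slightly cleaner packaging of the same argument the paper carries out directly with sequences.
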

\begin{proof}
$(i)$ As 
$$
[0,\infty)\ni \beta\mapsto \| u-\beta w\| 
$$
is continuous and tends to $\infty$ as $\beta\rightarrow \infty$, this function attains its minimum value. Thus, $P_w(u)\neq \emptyset$ for each 
$u\in X$. Now suppose $\alpha_k w\in P_w(u)$ for each $k\in \N$. Then 
$$
\| u-\alpha _kw\| \le \| u\| , \quad k\in \N. 
$$
In particular, $\| \alpha_k w\| \le \| u\| +\| \alpha_k w -u\| \le 2\| u\| $ and so $0\le \alpha_k\le 2\| u\| /\| w\| $. It follows that $(\alpha_k)_{k\in \N}$ has 
a convergent subsequence $(\alpha_{k_j})_{j\in \N}$ with limit $\alpha\ge 0.$  Then
$$
\| u-\alpha w\| =\lim_{j\rightarrow\infty}\| u-\alpha _{k_j}w\| \le \| u-\beta w\| ,\quad \beta\ge 0.
$$
Consequently, $\alpha w\in P_w(u)$ which implies that $P_w(u)$ is compact. 
\par Suppose $\alpha_1w,\alpha_2w\in P_w(u)$ 
and $c_1, c_2\ge 0$ with $c_1+c_2=1$. Then 
\begin{align*}
\| u-(c_1\alpha_1w+c_2\alpha_2w)\| &=\| c_1(u-\alpha_1w)+c_2(u-\alpha_2w)\| \\
&\le c_1\| u-\alpha_1w\| +c_2\| u-\alpha_2w\| \\
&\le c_1\| u-\beta w\| +c_2\| u-\beta w\| \\
&=\| u-\beta w\| 
\end{align*}
for each $\beta\ge 0$.  As a result, $P_w(u)$ is convex.  

\par $(ii)$ Suppose $\gamma\le 0$ and $\alpha>0$. Then
$$
\| \gamma w-\beta\gamma\| =(\beta-\gamma)\| w\|> (-\gamma)\| w\| =\| \gamma w-0w\| ,
$$
which implies $P_w(\gamma w)=\{0\}$.  If $\gamma>0$, then $\| \gamma w-\beta w\| =|\gamma-\beta|\| w\| =0$ only if $\beta=\gamma$. Consequently, 
$P_w(\gamma w)=\{\gamma w\}$.

\par $(iii)$ Assume $s>0$ and observe $v\in P_w(su)$ if and only if
$$
\| u-(v/s)\| \le \| u-(\beta/s) w\| , \quad \beta\ge 0.
$$ 
This inequality, in turn, holds if and only if $(v/s)\in P_w(u)$.
\end{proof}
By the above proposition, $P_w(u)$ is a compact line segment in $\{\beta w\in X: \beta\ge 0\}$. We define
$$
\alpha_w(u):=\inf\{\alpha>0: \alpha w\in P_w(u)\},
$$
which represents the distance between this line segment and the origin $0\in X$.  Below we state a few properties of $\alpha_w$, the most important being that 
$\alpha_w$ is continuous at each $\gamma w\in X$.

\begin{prop}\label{alphaWcont} Assume $w\in X\setminus\{0\}.$ \\
(i) For $u\in X$, $s> 0$, $\alpha_w(su)=s\alpha_w(u)$.  \\
(ii) $\alpha_w$ is lower semicontinuous. \\
(iii) $\alpha_w$ is continuous at each $\gamma w\in X$, $\gamma\in \R.$
\end{prop}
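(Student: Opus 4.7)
Part (i) is essentially a relabeling. By Proposition \ref{ProjProp}(iii), $P_w(su)=sP_w(u)$ for $s>0$, so $\alpha w\in P_w(u)$ if and only if $(s\alpha)w\in P_w(su)$. The admissible set in the definition of $\alpha_w(su)$ is therefore $s$ times the admissible set for $\alpha_w(u)$, and taking infima yields $\alpha_w(su)=s\alpha_w(u)$.

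For (ii), the key observation is that $P_w(u)$ is a compact segment in the ray, so its left endpoint is attained: $\alpha_w(u)w\in P_w(u)$. Hence, given $u_n\to u$ and $\alpha_n:=\alpha_w(u_n)$,
$$
\|u_n-\alpha_n w\|\le \|u_n-\beta w\|\quad\text{for all}\ \beta\ge 0.
$$
Taking $\beta=0$ and applying the triangle inequality gives $\alpha_n\|w\|\le 2\|u_n\|$, so $(\alpha_n)$ is bounded. Extract a subsequence with $\alpha_{n_k}\to \alpha^*=\liminf_n\alpha_n$; passing to the limit in the displayed inequality yields $\|u-\alpha^*w\|\le\|u-\beta w\|$ for every $\beta\ge 0$, i.e.\ $\alpha^*w\in P_w(u)$. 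Since $\alpha_w(u)$ is the least such value, $\liminf_n\alpha_n=\alpha^*\ge \alpha_w(u)$, proving lower semicontinuity.

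For (iii), given (ii) I only need to prove upper semicontinuity at $\gamma w$. Take any $u_n\to\gamma w$, set $\alpha_n=\alpha_w(u_n)$, and run the same argument: $(\alpha_n)$ is bounded, and every subsequential limit $\alpha^*$ satisfies $\alpha^*w\in P_w(\gamma w)$. But Proposition \ref{ProjProp}(ii) tells us $P_w(\gamma w)=\{\gamma^+ w\}$, so $\alpha^*=\gamma^+$ (using $w\neq 0$). Hence every convergent subsequence of $(\alpha_n)$ has the same limit $\gamma^+=\alpha_w(\gamma w)$, and boundedness upgrades this to convergence of the full sequence.

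The only potentially subtle point is that the projection $P_w$ is set-valued and its left endpoint need not vary continuously at a generic $u$; this is precisely why (ii) only delivers lower semicontinuity. The reason continuity is recovered at points $\gamma w$ on the ray is that the limiting projection set $P_w(\gamma w)$ collapses to a single point by Proposition \ref{ProjProp}(ii), eliminating the set-valued ambiguity. No substantial obstacle beyond this is expected; the whole proposition is a routine passage to the limit in the variational inequality defining $P_w$.
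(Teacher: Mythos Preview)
Your proof is correct and follows essentially the same approach as the paper: relabel via $P_w(su)=sP_w(u)$ for (i), and for (ii)--(iii) pass to the limit in the defining inequality $\|u_n-\alpha_n w\|\le\|u_n-\beta w\|$ along a convergent subsequence to place the limit $\alpha^*$ in $P_w(u)$ (respectively $P_w(\gamma w)=\{\gamma^+ w\}$). The only cosmetic difference is that you organize (iii) as ``lower semicontinuity from (ii) plus upper semicontinuity from the singleton projection,'' whereas the paper shows full convergence directly; the underlying argument is the same.
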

\begin{proof}
$(i)$ follows from part $(iii)$ of the previous proof. Indeed
$$
s\alpha_w(u)=\inf\{s\alpha>0: \alpha w\in P_w(u)\}=\inf\{\beta>0: \beta w\in sP_w(u)\}=\alpha_w(su).
$$
$(ii)$ Assume $u_k\rightarrow u\in X$ and $\liminf_{k\rightarrow\infty}\alpha_w(u_k)=\lim_{j\rightarrow\infty}\alpha_w(u_{k_j})=:\alpha_\infty$.
$$
\| u-\alpha_\infty w\| =\lim_{j\rightarrow\infty}\| u_{k_j}-\alpha_w(u_{k_j}) w\| \le\lim_{j\rightarrow\infty}\| u_{k_j}-\beta w\| = \| u-\beta w\| 
$$
for each $\beta\ge 0$. Thus, $\alpha_\infty w\in P_w(u)$ and $\alpha_w(u)\le \alpha_\infty.$ Hence, $\alpha_w$ is lower semicontinuous. 
\par $(iii)$ Assume that $\gamma\in \R$ and $u_k\rightarrow \gamma w$.  For each $k\in \N$, select $\alpha_k\ge 0$ such that $\alpha_k w\in P_w(u_k)$. Observe, $\alpha_k\| w\| \le \| u_k\| +\| u_k-\alpha_k w\| \le 2 \| u_k\| $. Thus, 
$(\alpha_k)_{k\in \N}$ is bounded and thus has a convergent subsequence $(\alpha_{k_j})_{j\in \N}$ with limit $\alpha_\infty$. Note that for each $\beta \ge 0$
\begin{align*}
\| \gamma w - \alpha_\infty w\|  & =
\lim_{j\rightarrow \infty} \| u_{k_j}-\alpha_{k_j} w\| \\
&\le\lim_{j\rightarrow \infty} \| u_{k_j}-\beta w\| \\
&=\| \gamma w - \beta w\| .
\end{align*}
As a result, $|\gamma -\alpha_\infty|=\min_{\beta\ge 0}|\gamma-\beta|$ and so $\alpha_\infty=\gamma^+$.  As this limit is independent of the subsequence $(\alpha_{k_j})_{j\in \N}$, it must be that $\lim_{k\rightarrow\infty}\alpha_w(u_k)=\gamma^+=\alpha_w(\gamma w)$.
\end{proof}
Let us now discuss some examples. 
% Finite dimension
\begin{ex}
Let $X=\R^n$ with the Euclidean norm $\| \cdot \| $. It is straightforward to verify that $\Jp(u)=\{\| u\| ^{p-2}u\}$ for each $u\in \R^n$. In this case, \eqref{eigenvalueLamP} is
$$
\partial\Phi(w)-\lambda_p\| w\| ^{p-2}w\ni 0.
$$
A typical $\Phi$ is 
$$
\Phi(u)=\frac{1}{p}N(u)^p\quad (u\in \R^n)
$$
where $N$ is a strictly convex norm on $\R^n$. 
\end{ex}
% Hilbert space 
\begin{ex}\label{HilbertSpaceEx}
Let $p=2$ and $X$ be a separable Hilbert space over $\R$ with inner product $(\cdot,\cdot)$. Here $\| u\| ^2=(u,u)$, and by the Riesz Representation Theorem, we may identify $X$ with $X^*$ and ${\cal J}_2$ with the identity mapping on $X$. Assume $(z_k)_{k\in\N}\subset X$ is an orthonormal basis for $X$ and $(\sigma_k)_{k\in \N}$ is a 
nondecreasing sequence of nonnegative numbers that satisfy $\lim_{k\rightarrow\infty}\sigma_k=\infty$. For $u\in X$, let $u_k=(u,z_k)$. Define the space 
$$
Y:=\left\{\sum_{k\in \N}u_kz_k\in X: \sum_{k\in \N}\sigma_k|u_{k}|^2<\infty \right\}
$$
and the operator 
$$
A: Y\mapsto X; \sum_{k\in \N}u_kz_k\mapsto \sum_{k\in \N}\sigma_ku_kz_k.
$$
Note $A$ is bijective if $\sigma_1>0$. 

\par We set 
$$
\Phi(u)=
\begin{cases}
\frac{1}{2}(Au,u), \quad &u\in Y,\\
+\infty,\quad &\text{otherwise}
\end{cases}
$$
and also observe that $\Phi$ is strictly convex on $Y$ if $\sigma_1>0$. Direct calculation gives
$$
\partial \Phi(u)=
\begin{cases}
Au, \quad & u\in Y\\
\emptyset, \quad & \text{otherwise}
\end{cases}.
$$
Note in particular that
$$
\lambda_2=\inf\left\{\frac{(Au,u)}{\| u\| ^2}, u\in Y\setminus\{0\}\right\}=\sigma_1.
$$
and that \eqref{eigenvalueLamP} takes the form $Aw=\lambda_2 w$. It is also plain to see that $\lambda_2$ is simple if and only if 
$$
\sigma_1<\sigma_2. 
$$
\end{ex}

% p-ground state 
\begin{ex}\label{LpEx}
Let $X=L^p(\Omega)$. In this case, ${\cal J}_p(u)$ is the singleton comprised of the function $|u|^{p-1}u\in L^q(\Omega)$ for each $u\in L^p(\Omega)$. A natural choice for $\Phi$ is
\begin{equation}\label{pLaplacePhi}
\Phi(u)=
\begin{cases}
\displaystyle\frac{1}{p}\int_\Omega|Du|^pdx, &\quad u\in W^{1,p}_0(\Omega)\\
+\infty, & \quad \text{otherwise}
\end{cases}.
\end{equation}
Here $|Du|:=\sqrt{\sum^n_{i=1}u^2_{x_i}}$ and $W^{1,p}_0(\Omega)$ is the closure of $C_c^\infty(\Omega)$ in the norm given by $\Phi(\cdot)^{1/p}$. Recall that $\Phi$ has compact sublevel sets by Rellich-Kondrachov compactness in Sobolev spaces. 

\par For this example, it has been established that $\lambda_p$ is simple \cite{Kawohl, KawohlLindqvist, Saka}.  The inequality \eqref{Poincare} 
$$
\lambda_p\int_\Omega|u|^pdx\le\int_\Omega|Du|^pdx\quad \left(u\in W^{1,p}_0(\Omega)\right)
$$
is known as Poincar\'{e}'s inequality and ubiquitous in mathematical analysis. Moreover, \eqref{eigenvalueLamP} 
takes the form of the following PDE 
\begin{equation}\label{pGroundState}
\begin{cases}
-\Delta_pw=\lambda_p|w|^{p-2}w\quad &x\in \Omega\\
\hspace{.34in} w=0 \quad &x\in \partial \Omega
\end{cases}.
\end{equation}
Here $\Delta_p\psi:=\text{div}(|D\psi|^{p-2}D\psi)$ is called the $p$-Laplacian. When $p=2$, $\lambda_2$ is the first eigenvalue of the Dirichlet Laplacian.  
\end{ex}

% Fractional 
\begin{ex}\label{LpEx2}
We may also extend the above example to fractional Sobolev spaces. Specifically, we can take
\begin{equation}\label{pFracLaplacePhi}
\Phi(u)=
\begin{cases}
\displaystyle\frac{1}{p} \iint_{\R^n\times\R^n}\frac{|u(x)-u(y)|^p}{|x-y|^{n+ps}}dxdy, &\quad u\in W^{s,p}_0(\Omega)\\
+\infty, & \quad \text{otherwise}
\end{cases}
\end{equation}
for $u\in X$.  Here $s\in (0,1)$ and $W^{s,p}_0(\Omega)$ is the closure of $C_c^\infty(\Omega)$ in the norm given by $\Phi(\cdot)^{1/p}$. 
See Theorem 7.1 of \cite{Hitch} for a proof that $\Phi$ has compact sublevel sets.

\par For this example, it has also been verified that $\lambda_p$ is simple \cite{LindgrenLindqvist}.  In this case, \eqref{eigenvalueLamP} is   
\begin{equation}\label{spGroundState}
\begin{cases}
(-\Delta_p)^sw=\lambda_p|w|^{p-2}w\quad &x\in \Omega\\
\hspace{.53in} w=0 \quad &x\in \R^n\setminus\Omega
\end{cases}.
\end{equation}
Here the operator $(-\Delta_p)^s$ is defined as the principal value 
$$
(-\Delta_p)^s\psi(x):=2\lim_{\epsilon\rightarrow 0^+}\int_{\R^n\setminus B_\epsilon(x)}\frac{|\psi(x)-\psi(y)|^{p-2}(\psi(x)-\psi(y))}{|x-y|^{n+ps}}dy.
$$
Also note when $p=2$, $(-\Delta_p)^s$ is a power of the Dirichlet Laplacian.  
\end{ex}

% Robin BC
\begin{ex}\label{RobinEx}
Let $X=L^p(\Omega)$ and suppose $\beta>0$. For $u\in X$ set 
\begin{equation}
\Phi(u)=
\begin{cases}
\frac{1}{p}\displaystyle\int_\Omega|Du|^pdx+\beta\frac{1}{p}\int_{\partial\Omega}|Tu|^pd\sigma, &\quad u\in W^{1,p}(\Omega)\\
+\infty, & \quad \text{otherwise}
\end{cases}.\nonumber
\end{equation}
Here $W^{1,p}(\Omega)$ is the Sobolev space of $L^p(\Omega)$ functions whose weak first partial derivatives belong $L^p(\Omega)$. Recall
$T: W^{1,p}(\Omega)\rightarrow L^p(\partial\Omega; \sigma)$ is the Sobolev trace operator 
and $\sigma$ is $n-1$ dimensional Hausdorff measure.    

\par The least Rayleigh quotient $\lambda_p$ is simple and minimizers of $\Phi(u)/\| u\| ^p$ satisfy  
\begin{equation}\label{WeakRobin}
\int_\Omega|Dw|^{p-2}Dw\cdot D\phi dx + \beta\int_{\partial\Omega}(|Tw|^{p-2}Tw)T\phi d\sigma=\lambda_p\int_\Omega |w|^{p-2}w\phi dx
\end{equation}
for each $\phi\in W^{1,p}(\Omega)$ (see \cite{Bucur} for a detailed discussion).  Integrating by parts we find that \eqref{WeakRobin} is a weak formulation of the Robin boundary value problem 
\begin{equation}\label{RobinGroundState}
\begin{cases}
\hspace{1.43in} -\Delta_pw=\lambda_p|w|^{p-2}w\quad &x\in \Omega\\
|Dw|^{p-2}Dw\cdot \nu +\beta |w|^{p-2}w=0 \quad &x\in \partial \Omega
\end{cases}.
\end{equation}
Here and in what follows $\nu$ is the outward unit normal vector field on $\partial\Omega$. 
\end{ex}

% Neumann problem 
\begin{ex}\label{NeumannEx}
Let $X=L^p(\Omega)/{\cal C}$, where ${\cal C}=\{v\in L^p(\Omega): v(x)=\text{constant}\; \text{a.e.}\; x\in \Omega\}$.  That is, for any two $u, v\in L^p(\Omega)$ with $u-v$ constant almost everywhere, the equivalence classes of $u$ and $v$ are the same $X$. We will take 
the liberty of identifying equivalence classes in $X$ with a representative in $L^p(\Omega$).  Recall that $X$ is equipped with the quotient norm $$
\| u\| :=\inf_{v\in {\cal C}}\| u+v\| _{L^p(\Omega)}=\inf_{c\in \R}\left(\int_\Omega|u+c|^pdx\right)^{1/p}
$$ 
and 
$$
X^*=\left\{\xi\in L^q(\Omega): \int_\Omega \xi dx=0\right\}.
$$
\par Note that for each $u\in X$, 
$$
\R\ni c\mapsto \int_\Omega|u+c|^pdx
$$
is strictly convex and tends to $+\infty$ by sending $c$ to either $+\infty$ or $-\infty$.  Therefore, for each $u\in X$, there is a unique $c=c^u$ such that $\| u\| =\left(\int_\Omega|u+c^u|^pdx\right)^{1/p}$.  Moreover, 
$$
\int_\Omega|u+c^u|^{p-2}(u+c^u)dx=0.
$$
It is also routine to verify that $\Jp(u)=\{|u+c^u|^{p-2}(u+c^u)\}$. 

\par We define 
\begin{equation}
\Phi(u):=
\begin{cases}
\displaystyle\frac{1}{p}\int_\Omega|Du|^pdx, &\quad u\in W^{1,p}(\Omega)\\
+\infty, & \quad \text{otherwise}
\end{cases}\nonumber
\end{equation}
for $u\in X$. Note that $\Phi$ is strictly convex on its domain and $\Phi(u+c)=\Phi(u)$ for each constant $c$.  In particular, 
\begin{align}\label{NeumannLam}
\lambda_p&=\inf\left\{\frac{\displaystyle\int_\Omega|Du|^pdx}{\displaystyle\inf_{c\in \R}\int_\Omega|u+c|^pdx}: u\in W^{1,p}(\Omega)\setminus{\cal C}\right\}\nonumber \\
&=\inf\left\{\frac{\displaystyle\int_\Omega|D(u+c^u)|^pdx}{\displaystyle\int_\Omega|u+c^u|^pdx}: u\in W^{1,p}(\Omega)\setminus{\cal C}\right\}\nonumber\\
&=\inf\left\{\frac{\displaystyle\int_\Omega|Dw|^pdx}{\displaystyle\int_\Omega|w|^p\,dx}: w\in W^{1,p}(\Omega)\setminus\{0\}, \quad 
\int_\Omega|w|^{p-2}wdx=0\right\}.
\end{align}

\par Minimizers in \eqref{NeumannLam} satisfy the boundary value problem
\begin{equation}\label{pGroundStateNeumann}
\begin{cases}
\hspace{.58in} -\Delta_pw=\lambda_p|w|^{p-2}w\quad &x\in \Omega\\
|Dw|^{p-2}Dw\cdot \nu=0 \quad &x\in \partial \Omega
\end{cases}
\end{equation}
in the weak sense, and of course, $\int_\Omega|w|^{p-2}wdx=0$.  Therefore, $\lambda_p$ is the 
first nontrivial Neumann eigenvalue of the $p$-Laplacian. Moreover, $\lambda_p$ is the optimal constant in another version of Poincar\'e's inequality
\begin{equation}
\lambda_p\inf_{c\in \R}\int_\Omega|u+c|^pdx\le \int_\Omega|Du|^pdx, \quad \left(u\in W^{1,p}(\Omega)\right).\nonumber
\end{equation}
 Simple examples show $\lambda_p$ is not simple in general.  However, there are conditions that can be assumed on $\Omega$ which results in a simple $\lambda_p$ (see, for instance, Proposition 1.1 of \cite{BanBur}).  
\end{ex}

% Steklov problem 

% Non reflexive example 
\begin{ex}\label{CzeroEx}
Let $K\subset \R^n$ be compact and $X=C(K)$ be equipped with the norm $\| u\| _\infty:=\sup\{|u(x)|: x\in K\}$. By a
theorem of Riesz,  
 $X^*$ is the collection of signed Radon measures on $K$ equipped with the total variation norm
$$
\| \xi\| _{TV}:=\sup\left\{\int_Ku(x)d\xi(x): \| u\| _\infty\le 1\right\}
$$
(Corollary 7.18 in \cite{Folland}). We leave it as an exercise to verify that  $\xi\in X^*$ belongs to $\Jp(u)$ if and only if 
$$
\int_Kh(x)d\xi(x)\le\max\{|u(x)|^{p-2}u(x)h(x):x\in K\;\text{such that}\; |u(x)|=\|u\|_\infty \}.
$$
for each $h\in X$. 

\par Now assume $K=\overline{\Omega}$ and define $\Phi$ as in \eqref{pLaplacePhi}. Additionally suppose $p>n$ and recall that each function in $W_0^{1,p}(\Omega)$ has a 
continuous representative $C^{1-n/p}(\overline{\Omega})$; so without loss of generality we consider $W_0^{1,p}(\Omega)\subset C(\overline\Omega)$. In particular, we note that this embedding is compact in $X$ by the Arzel\'a-Ascoli theorem. 
Therefore, minimizers exist for the infimum 
$$
\lambda_p=\inf\left\{\frac{\int_\Omega|Du|^pdx}{\| u\| _\infty^p}:u\in W_0^{1,p}(\Omega)\setminus\{0\}\right\}.
$$
\par Computing the first variation of $\int_\Omega|Du|^pdx/\| u\| _\infty^p$ we find
\begin{equation}\label{WeakMeasure}
\int_\Omega|Du|^{p-2}Du\cdot D\phi dx=\lambda_p\max\{|u(x)|^{p-2}u(x)\phi(x): x\in \overline{\Omega}, |u(x)|=\|u\|_\infty\}
\end{equation}
for each $\phi\in W^{1,p}_0(\Omega)$. Notice that the left hand side of \eqref{WeakMeasure} is a linear functional of $\phi$. As a result, for each pair $\phi_1,\phi_2\in W^{1,p}_0(\Omega)$
$$
\max_{|u|=\|u\|_\infty}\{|u|^{p-2}u(\phi_1+\phi_2)\}=\max_{|u|=\|u\|_\infty}\{|u|^{p-2}u\phi_1\}+\max_{|u|=\|u\|_\infty}\{|u|^{p-2}u\phi_2\}.
$$
It follows that $|u|$ achieves its maximum value at a single $x_0\in \Omega$ and so
\begin{equation}\label{pGroundStateMeasure}
\begin{cases}
-\Delta_pu=\lambda_p|u(x_0)|^{p-2}u(x_0)\delta_{x_0}\quad &x\in \Omega\\
\hspace{.34in} u=0 \quad &x\in \partial \Omega
\end{cases}.
\end{equation}
Also observe that $|u(x_0)|^{p-2}u(x_0)\delta_{x_0}\in \Jp(u)$. 

\par We remark that $\lambda_p$ is simple when $\Omega=B_r(x_0)$.  In this case, functions $u\in W^{1,p}_0(B_r(x_0))$ that minimize $\int_{B_r(x_0)}|Du|^pdx/\max_{B_r(x_0)}|u|^p$
are necessarily of the form
$$
u(x)=a\left(r^{\frac{p-n}{p-1}}-|x-x_0|^{\frac{p-n}{p-1}}\right),\quad x\in B_r(x_0)
$$
for $a\in \R$ \cite{Brothers,  Cianchi}. We have not determined whether or not this simplicity is restricted to balls. However, we can show that any two minimizers $u_1, u_2\in W^{1,p}_0(\Omega)$
of $\int_\Omega|Du|^pdx/\| u\| _\infty^p$ that satisfy $|u_1(x_0)|=\|u_1\|_\infty$, $|u_2(x_0)|=\|u_2\|_\infty$ are scalar multiples of one another.

\end{ex}

% Trace Ex
\begin{ex}\label{TraceEx}
Recall that the trace operator $T: W^{1,p}(\Omega)\rightarrow L^p(\partial\Omega,\sigma)$ is
a bounded linear mapping.  We can use the methods in this paper to estimate the operator norm of $T$
\begin{equation}\label{TraceNorm}
\| T\| :=\sup\left\{\frac{\displaystyle\left(\int_{\partial\Omega}|Tu|^pdx\right)^{1/p}}{\displaystyle\left(\int_{\Omega}(|Du|^p+|u|^p)dx\right)^{1/p}}: u\in W^{1,p}\setminus\{0\}\right\}.
\end{equation}
In particular, we can use the fact that $T$ is a compact mapping \cite{Biegert} and that $W^{1,p}(\Omega)$ is reflexive to show that there is at least one maximizer in \eqref{TraceNorm}.  Finding such a maximizer is also known as the ``Steklov problem" \cite{Auchmuty, DelPezzo,Emamizadeh, Steklov}. 

\par Maximizers in \eqref{TraceNorm} satisfy 
$$
\int_\Omega(|Dw|^{p-2}Dw\cdot D\phi + |w|^{p-2}w\phi)dx=\lambda_p\int_{\partial\Omega}|Tw|^{p-2}Tw\cdot(T\phi)d\sigma
$$
for $\phi\in W^{1,p}(\Omega)$. Here $\lambda_p:=\| T\| ^{-p}$, and observe that this is a weak formulation of the PDE
\begin{equation}\label{SteklovEivalueProb}
\begin{cases}
-\Delta_pw+|w|^{p-2}w=0\quad &x\in \Omega\\
\hspace{.17in} |Dw|^{p-2}Dw\cdot\nu=\lambda_p|w|^{p-2}w \quad &x\in \partial \Omega
\end{cases}.
\end{equation}
Moreover, it has been established that the collection of maximizers is one dimensional \cite{Emamizadeh}, and so in this sense we say $\lambda_p$ is simple. 
\end{ex}

%%%%%%%%%%%%%%%%%%%%%%%% Iteration %%%%%%%%%%%%%%%%%%%%%%%%%%%
\section{Inverse Iteration}\label{IterSec}
In this section, we will study the convergence properties of solutions of the inverse iteration scheme \eqref{InverseIt}
$$
\partial \Phi(u_k)- \Jp(u_{k-1})\ni 0, \quad k\in \N.
$$
Here $u_0\in X$ is given. Once $u_{k-1}$ is known, $u_k\in \dom(\Phi)$ can be obtained by selecting $\xi_{k-1}\in\Jp(u_{k-1})$ and defining
$u_k$ as the unique minimizer of the functional
$$
X\ni v\mapsto \Phi(v)-\langle \xi_{k-1},v\rangle.
$$
Indeed, $\Phi(v)-\langle \xi_{k-1},v\rangle\ge\Phi(u_k) - \langle \xi_{k-1},u_k\rangle$ for all $v\in X$ implies $ \xi_{k-1}\in \partial\Phi(u_k)$. 

\par We now proceed to derive various monotonicity and compactness properties of solutions that will be used in our proof of Theorem \ref{InvItThm}. 
\begin{lem}\label{MonotonPhi}
Assume $u_0\in X$. Then 
\begin{equation}\label{DiscreteMonotonicity0}
\| u_{k}\| \le \frac{1}{\mu_p}\| u_{k-1}\| 
\end{equation}
and
\begin{equation}\label{DiscreteMonotonicity}
\Phi(u_{k})\le \frac{1}{\mu_p^p}\Phi(u_{k-1})
\end{equation}
for each $k\in \N.$  In particular, $\left(\| \mu_p^{k}u_k\| \right)_{k\in N}$ and $\left(\Phi(\mu_p^{k}u_k)\right)_{k\in N}$ are nonincreasing sequences. 
\end{lem}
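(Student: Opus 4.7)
The plan is to extract both inequalities from the identities in \eqref{Euler} and \eqref{jp} together with the Poincar\'e-type bound \eqref{Poincare}. Fix $k \in \N$ and, by \eqref{InverseIt}, pick $\xi_{k-1} \in \Jp(u_{k-1}) \cap \partial\Phi(u_k)$. Three facts about this multiplier will do all the work:
\begin{itemize}
\item[(a)] By Euler's identity \eqref{Euler} applied to $\Phi$ at $u_k$, $p\Phi(u_k) = \langle \xi_{k-1}, u_k\rangle$.
\item[(b)] By the characterization \eqref{jp} of $\Jp$, $\|\xi_{k-1}\|_*^{\,q} = \|u_{k-1}\|^p$, i.e.\ $\|\xi_{k-1}\|_* = \|u_{k-1}\|^{p-1}$ since $p/q = p-1$.
\item[(c)] By duality, $\langle \xi_{k-1}, u_k\rangle \le \|\xi_{k-1}\|_*\,\|u_k\|$.
\end{itemize}
Combining (a)--(c) gives the master estimate
$$
p\Phi(u_k) \;\le\; \|u_{k-1}\|^{p-1}\,\|u_k\|.
$$

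For \eqref{DiscreteMonotonicity0}, apply \eqref{Poincare} at $u_k$, so $\lambda_p\|u_k\|^p \le p\Phi(u_k) \le \|u_{k-1}\|^{p-1}\|u_k\|$; if $u_k=0$ the bound is trivial, otherwise divide by $\|u_k\|$ and take $(p-1)$-th roots, using $\mu_p = \lambda_p^{1/(p-1)}$, to get $\mu_p\|u_k\|\le\|u_{k-1}\|$. For \eqref{DiscreteMonotonicity}, feed \eqref{DiscreteMonotonicity0} back into the master estimate,
$$
p\Phi(u_k) \;\le\; \|u_{k-1}\|^{p-1}\cdot\frac{\|u_{k-1}\|}{\mu_p} \;=\; \frac{\|u_{k-1}\|^p}{\mu_p},
$$
and then use \eqref{Poincare} at $u_{k-1}$, namely $\|u_{k-1}\|^p \le p\Phi(u_{k-1})/\lambda_p = p\Phi(u_{k-1})/\mu_p^{p-1}$, to conclude
$$
\Phi(u_k) \;\le\; \frac{\Phi(u_{k-1})}{\mu_p^{p}}.
$$

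The ``in particular'' clause is then immediate: multiplying \eqref{DiscreteMonotonicity0} by $\mu_p^k$ yields $\|\mu_p^k u_k\|\le\|\mu_p^{k-1}u_{k-1}\|$, and the $p$-homogeneity of $\Phi$ converts \eqref{DiscreteMonotonicity} into $\Phi(\mu_p^k u_k) \le \Phi(\mu_p^{k-1}u_{k-1})$.

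I do not anticipate a real obstacle; the only subtlety is to remember that in the inverse iteration the \emph{same} $\xi_{k-1}$ lies in two different subdifferentials, which is exactly what lets us use \eqref{Euler} to rewrite $p\Phi(u_k)$ and \eqref{jp} to control $\|\xi_{k-1}\|_*$ in one stroke. A minor bookkeeping point is the edge case $u_k=0$ (then $u_j=0$ for $j\ge k$, and both bounds hold trivially from that step on), which should be handled explicitly.
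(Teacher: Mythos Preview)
Your proof is correct and follows essentially the same route as the paper: derive the master estimate $p\Phi(u_k)\le \|u_{k-1}\|^{p-1}\|u_k\|$ from \eqref{Euler} and \eqref{jp}, then combine with \eqref{Poincare}. The only cosmetic difference is in the derivation of \eqref{DiscreteMonotonicity}: the paper bounds $\|u_k\|$ via Poincar\'e at $u_k$ (so both factors become powers of $\Phi$), whereas you first feed in the already-proved \eqref{DiscreteMonotonicity0} and then apply Poincar\'e only at $u_{k-1}$; both arrive at the same inequality.
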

\begin{proof}
For each $k\in \N$, choose $\xi_{k-1}\in \Jp(u_{k-1})\cap\partial\Phi(u_k)$. By \eqref{Euler} and \eqref{jp},
\begin{align}\label{MonoEst}
p\Phi(u_{k})&=\langle \xi_{k-1},u_{k}\rangle \nonumber \\
&\le \| u_{k}\| \| \xi_{k-1}\| _*\nonumber\\ 
&= \| u_{k}\| \| u_{k-1}\| ^{p-1}.
\end{align}
Combining this bound with inequality \eqref{Poincare} gives
$$
\lambda_p\| u_k\| ^p\le p\Phi(u_k)\le \| u_{k}\| \| u_{k-1}\| ^{p-1}.
$$
Inequality \eqref{DiscreteMonotonicity0} now follows. 
\par We may assume $u_0\in\dom(\Phi)$; or else \eqref{DiscreteMonotonicity} clearly holds for $k=1$. Moreover, if $\Phi(u_{k})=0$, \eqref{DiscreteMonotonicity} is immediate, so we assume otherwise. Continuing from \eqref{MonoEst}
and again applying inequality \eqref{Poincare} gives 
\begin{align*}
p\Phi(u_{k})&\le \| u_{k}\| \| u_{k-1}\| ^{p-1}\\
&\le \left[\frac{1}{\lambda_p}p\Phi(u_{k})\right]^{1/p}\left[\frac{1}{\lambda_p}p\Phi(u_{k-1})\right]^{1-1/p} \\
&=\frac{p}{\lambda_p} \left[\Phi(u_{k})\right]^{1/p}\left[\Phi(u_{k-1})\right]^{1-1/p}.
\end{align*}
Therefore $\Phi(u_{k})^{1-1/p}\le \frac{1}{\lambda_p}\Phi(u_{k-1})^{1-1/p}$ and we have verified \eqref{DiscreteMonotonicity}. 
\end{proof}
As mentioned in the introduction, solution sequences of  \eqref{InverseIt} admit the following fundamental monotonicity properties. 
\begin{lem}\label{DiscMonotoneQuotients}
Assume $u_0\in \overline{\dom(\Phi)}\setminus\{0\}$. Then $u_k\in\dom(\Phi)\setminus\{0\}$ for each $k\in \N$. Moreover, 
\begin{equation}\label{DiscreteMonotonicity2}
\frac{\Phi(u_{k})}{\| u_k\| ^p}\le \frac{\Phi(u_{k-1})}{\| u_{k-1}\| ^p}
\end{equation}
and
\begin{equation}\label{DiscreteMonotonicity3}
\frac{\| u_{k}\| }{\| u_{k+1}\| }\le \frac{\| u_{k-1}\| }{\| u_k\| }
\end{equation}
for each $k\in \N$.
\end{lem}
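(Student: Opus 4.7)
The plan is a short induction on $k$ that leans on two workhorse bounds: inequality \eqref{MonoEst} from Lemma \ref{MonotonPhi} and the ``Cauchy--Schwarz'' inequality \eqref{CSIneq}. For each $k\in\N$, fix a selector $\xi_{k-1}\in\Jp(u_{k-1})\cap\partial\Phi(u_k)$; its existence is already the content of the introductory paragraph of Section \ref{IterSec}, since $u_k$ is obtained as the minimizer of $v\mapsto\Phi(v)-\langle\xi_{k-1},v\rangle$, which is coercive by \eqref{Poincare}, lower semicontinuous, and has compact sublevel sets. This shows $u_k\in\dom(\Phi)$; the content of the first assertion is thus $u_k\neq 0$.

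For the nonvanishing, I would argue inductively: assuming $u_{k-1}\in\overline{\dom(\Phi)}\setminus\{0\}$, the formula \eqref{jp} gives $\langle\xi_{k-1},u_{k-1}\rangle=\|u_{k-1}\|^p>0$. If $u_k=0$, then $\xi_{k-1}\in\partial\Phi(0)$, so by homogeneity $t^p\Phi(v)=\Phi(tv)\ge t\langle\xi_{k-1},v\rangle$ for every $v\in X$ and $t>0$; dividing by $t$ and letting $t\to 0^+$ (using $p>1$) forces $\langle\xi_{k-1},v\rangle\le 0$ on $\dom(\Phi)$, and hence on $\overline{\dom(\Phi)}$ by continuity of $\xi_{k-1}\in X^*$. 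Testing at $u_{k-1}$ gives the contradiction. Having $u_k\in\dom(\Phi)\setminus\{0\}$ in hand, the induction step repeats with $u_{k-1}$ replaced by $u_k$.

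For the two monotonicity statements I would package the estimates as
\begin{equation*}
\text{(a)}\quad p\Phi(u_k)\le \|u_{k-1}\|^{p-1}\|u_k\|,\qquad
\text{(b)}\quad \|u_{k-1}\|^p\le [p\Phi(u_k)]^{1/q}[p\Phi(u_{k-1})]^{1/p},
\end{equation*}
where (a) is exactly \eqref{MonoEst} and (b) follows from \eqref{CSIneq} applied to $\xi_{k-1}\in\partial\Phi(u_k)$ tested against $v=u_{k-1}$, together with $\langle\xi_{k-1},u_{k-1}\rangle=\|u_{k-1}\|^p$. Raising (b) to the $q$th power and substituting the bound in (a) for $p\Phi(u_k)$ eliminates $\Phi(u_k)$ and yields, after a little algebra with the exponents, $\|u_{k-1}\|^{2p-1}\le \|u_k\|^{p-1}\,p\Phi(u_{k-1})$. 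This is precisely $\|u_{k-1}\|^{p-1}/\|u_k\|^{p-1}\le p\Phi(u_{k-1})/\|u_{k-1}\|^p$, which chains with (a) rewritten as $p\Phi(u_k)/\|u_k\|^p\le \|u_{k-1}\|^{p-1}/\|u_k\|^{p-1}$ to produce \eqref{DiscreteMonotonicity2}. For \eqref{DiscreteMonotonicity3}, I would apply (b) at index $k+1$ and substitute the bound from (a) at $k$ to get $\|u_k\|^{p+1}\le p\Phi(u_{k+1})\|u_{k-1}\|$; then using (a) at index $k+1$ to bound $p\Phi(u_{k+1})\le \|u_k\|^{p-1}\|u_{k+1}\|$ collapses this to the log-convexity inequality $\|u_k\|^2\le \|u_{k-1}\|\|u_{k+1}\|$, equivalent to \eqref{DiscreteMonotonicity3}.

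The only genuine difficulty is the nonvanishing step, since it is the one place where the hypothesis $u_0\in\overline{\dom(\Phi)}\setminus\{0\}$---as opposed to merely $u_0\in\dom(\Phi)$---needs a density argument to let the homogeneity-based inequality $\langle\xi_{k-1},\cdot\rangle\le 0$ reach $u_0$ in the base case. Everything else is just careful bookkeeping with the exponents $p$ and $q=p/(p-1)$.
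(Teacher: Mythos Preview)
Your proof is correct and follows essentially the same approach as the paper: the nonvanishing step is identical (if $u_k=0$ then $\xi_{k-1}\in\partial\Phi(0)$, and homogeneity forces $\langle\xi_{k-1},\cdot\rangle\le 0$ on $\overline{\dom(\Phi)}$, contradicting $\langle\xi_{k-1},u_{k-1}\rangle=\|u_{k-1}\|^p>0$), and both arguments for the monotonicity statements rest on combining the same two inequalities you label (a) and (b). The only organizational difference is that for \eqref{DiscreteMonotonicity3} the paper inserts \eqref{DiscreteMonotonicity2} as an intermediate step, whereas you bypass it and go directly from (a) at indices $k,k+1$ and (b) at index $k+1$ to the log-convexity inequality $\|u_k\|^2\le\|u_{k-1}\|\|u_{k+1}\|$; this is a slightly cleaner bookkeeping but not a different idea.
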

\begin{proof}
1. Assume that $u_1=0$. By inequality \eqref{CSIneq}, $\langle \xi,v\rangle=0$ for any $\xi\in \partial \Phi(u_1)$ and $v\in \overline{\dom(\Phi)}$.  Selecting $\xi\in\partial\Phi(u_1)\cap\Jp(u_0)$ gives $0=\langle \xi,u_0\rangle =\| u_0\| ^p$ by \eqref{jp}. 
As a result, if $u_0\in \overline{\dom(\Phi)}\setminus\{0\}$, $u_1\in \dom(\Phi)\setminus\{0\}$.  By induction on $k\in \N$, we conclude that $u_k\in \dom(\Phi)\setminus\{0\}$. 

\par 2.  We now proceed to verify \eqref{DiscreteMonotonicity2}.  For $\xi_{k-1}\in \partial\Phi(u_k)\cap \in\Jp(u_{k-1})$, 
\begin{align}\label{ukminus1Ineq}
\| u_{k-1}\| ^p&=\langle \xi_{k-1},u_{k-1}\rangle \nonumber\\
%&=\langle \xi,u_{k-1}\rangle \nonumber\\
&\le  [p\Phi(u_k)]^{1-1/p} [p\Phi(u_{k-1})]^{1/p}
\end{align}
by \eqref{jp} and inequality \eqref{CSIneq}.  Combining \eqref{MonoEst} and \eqref{ukminus1Ineq} gives
\begin{align*}
\frac{p\Phi(u_k)}{\| u_k\| ^p}&\le \frac{\| u_{k-1}\| ^{p}}{\| u_k\| ^{p-1}\| u_{k-1}\| }\\
&\le \frac{[p\Phi(u_k)]^{1-1/p} [p\Phi(u_{k-1})]^{1/p}}{\| u_k\| ^{p-1}\| u_{k-1}\| }\\
&=\left[\frac{p\Phi(u_k)}{\| u_k\| ^p}\right]^{1-1/p} \left[\frac{p\Phi(u_{k-1})}{\| u_{k-1}\| ^p}\right]^{1/p}.
\end{align*}
The inequality \eqref{DiscreteMonotonicity2} is now immediate.

\par 3. Employing \eqref{ukminus1Ineq}, \eqref{DiscreteMonotonicity2}, and then \eqref{MonoEst} gives

\begin{align*}
\frac{\| u_{k}\| ^p}{\| u_{k+1}\| ^p}&\le \frac{[p\Phi(u_{k+1})]^{1-1/p} [p\Phi(u_{k})]^{1/p}}{\| u_{k+1}\| ^p} \\
&\le \left[p\Phi(u_k)\frac{\| u_{k+1}\| ^p}{\| u_{k}\| ^p}\right]^{1-1/p}\frac{[p\Phi(u_k)]^{1/p}}{\| u_{k+1}\| ^p}\\
&=\frac{p\Phi(u_k)}{\| u_{k+1}\|  \| u_k\| ^{p-1}}\\
&\le \frac{\| u_k\|  \| u_{k-1}\| ^{p-1}}{\| u_{k+1}\|  \| u_k\| ^{p-1}}\\
&=\frac{\| u_{k}\| }{\| u_{k+1}\| }\left[\frac{\| u_{k-1}\| }{\| u_{k}\| }\right]^{p-1}.
\end{align*}
We then conclude \eqref{DiscreteMonotonicity3}. 
\end{proof}
\begin{rem}
Without the assumption $u_0\in\overline{\dom(\Phi)}\setminus\{0\}$, the inequalities \eqref{DiscreteMonotonicity2} and \eqref{DiscreteMonotonicity3} are still 
valid provided their denominators are nonzero. 
\end{rem}
It is straightforward to check that if $u_0$ is a minimizer of $\Phi(u)/\| u\| ^p$, then 
\begin{equation}\label{SepVarDisc}
u_k=\mu^{-k}_pu_0, \quad k\in \N
\end{equation}
is a ``separation of variables" solution of \eqref{InverseIt}. We show in fact that this solution is unique. We will assume that $\lambda_p$ is simple for 
the remainder of this section. 
 
\begin{cor}\label{discreteUniqueness} If $u_0\neq 0$ is a minimizer of $\Phi(u)/\| u\| ^p$, \eqref{SepVarDisc} is the unique solution sequence of \eqref{InverseIt}.
\end{cor}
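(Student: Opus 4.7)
The plan is to start from any solution sequence $(u_k)_{k\in\N}$ of \eqref{InverseIt} with $u_0$ a minimizer of $\Phi(u)/\|u\|^p$ and show by induction that $u_k = \mu_p^{-k} u_0$. Since $u_0 \in \dom(\Phi)\setminus\{0\}$, Lemma \ref{DiscMonotoneQuotients} gives $u_k \in \dom(\Phi)\setminus\{0\}$ for every $k$, together with the monotonicity chain
$$
\lambda_p \;\le\; \frac{p\Phi(u_k)}{\|u_k\|^p} \;\le\; \frac{p\Phi(u_{k-1})}{\|u_{k-1}\|^p} \;\le\; \cdots \;\le\; \frac{p\Phi(u_0)}{\|u_0\|^p} \;=\; \lambda_p.
$$
The lower bound $p\Phi(u)/\|u\|^p \ge \lambda_p$ then forces equality throughout, so each $u_k$ is itself a minimizer.

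Next, since $\lambda_p$ is simple, $u_k$ and $u_{k-1}$ are linearly dependent; as $u_{k-1} \ne 0$, I may write $u_k = c_k u_{k-1}$ for some $c_k \in \R$. To pin down $c_k$, I would select $\xi_{k-1} \in \Jp(u_{k-1}) \cap \partial \Phi(u_k)$ as provided by the scheme and combine Euler's identity \eqref{Euler} with the characterization \eqref{jp} of $\Jp$ to obtain
$$
p\Phi(u_k) \;=\; \langle \xi_{k-1}, u_k\rangle \;=\; c_k \langle \xi_{k-1}, u_{k-1}\rangle \;=\; c_k \|u_{k-1}\|^p.
$$
Strict convexity of $\Phi$ with $\Phi(0)=0$ and $u_k \ne 0$ give $\Phi(u_k) > 0$, which forces $c_k > 0$. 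Using this together with $p\Phi(u_k) = \lambda_p \|u_k\|^p = \lambda_p c_k^p \|u_{k-1}\|^p$ yields $\lambda_p c_k^{p-1} = 1$, i.e.\ $c_k = \lambda_p^{-1/(p-1)} = \mu_p^{-1}$. Iterating in $k$ delivers $u_k = \mu_p^{-k} u_0$, which is \eqref{SepVarDisc}.

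I do not foresee any serious obstacle: the heavy lifting is done by Lemma \ref{DiscMonotoneQuotients}, which both confines the iterates to $\dom(\Phi)\setminus\{0\}$ and pins $p\Phi(u_k)/\|u_k\|^p$ to $\lambda_p$. The only subtle point is ruling out $c_k \le 0$, but this is immediate from $\Phi(u_k) > 0$, which in turn follows from strict convexity together with $\Phi(0)=0$. Notably, the argument holds for \emph{any} choice of $\xi_{k-1} \in \Jp(u_{k-1})$, so the uniqueness asserted by the corollary is independent of the (possibly multi-valued) selection made at each step.
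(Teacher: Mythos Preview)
Your proof is correct and follows essentially the same strategy as the paper: both use Lemma~\ref{DiscMonotoneQuotients} to force each $u_k$ to be a minimizer, invoke simplicity to get linear dependence, and then use \eqref{Euler}, \eqref{jp}, and strict convexity to pin down the scalar. The only cosmetic difference is that the paper first determines $\|\mu_p u_1\|=\|u_0\|$ (hence $u_1=\pm u_0/\mu_p$) and then rules out the minus sign, whereas you write $u_k=c_k u_{k-1}$ from the outset and solve for $c_k$ directly; your route is arguably a bit more streamlined.
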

\begin{proof}
By induction, it suffices to verify this claim for $k=1$. Recall $u_1\neq 0$. By \eqref{DiscreteMonotonicity2}, $u_1$ is also a minimizer of $\Phi(u)/\| u\| ^p$. For $\xi\in \partial\Phi(u_1)\cap\Jp(u_0)$  
$$
\lambda_p\| u_1\| ^p=p\Phi(u_1)=\langle\xi,u_1\rangle.
$$
Therefore $\| \mu_pu_1\| ^p=\langle\xi,\mu_pu_1\rangle$, which implies $\xi\in\Jp(\mu_pu_1)$. It follows that 
$\| \mu_p u_1\| =\| u_0\| $, and as  $\lambda_p$ is simple, $u_1=u_0/\mu_p$ or $u_1=-u_0/\mu_p$. 

\par Suppose $u_1=-u_0/\mu_p$ and select $\eta\in \partial\Phi(-u_0/\mu_p)\cap\Jp(u_0)$. It follows that 
\begin{align*}
\| u_0\| ^p&=\langle\eta,u_0\rangle\\
&=-\mu_p\left\langle\eta,-\frac{u_0}{\mu_p}\right\rangle\\
&=-\mu_pp\Phi\left(-\frac{u_0}{\mu_p}\right)\\
&=\frac{p}{\lambda_p}(-\Phi(-u_0))\\
&<\frac{p}{\lambda_p}\Phi(u_0)\\
&=\| u_0\| ^p
\end{align*}
since $u_0\neq 0$ and $\Phi$ is strictly convex on its domain. Thus $u_1=\mu_p^{-1}u_0$ and the claim is verified. 
\end{proof}
Theorem \ref{InvItThm} asserts that general solution sequences $(u_k)_{k\in\N}$ of \eqref{InverseIt} behave like \eqref{SepVarDisc} for large $k$.  Two final ingredients in our proof will be a fundamental compactness assertion and a lemma involving the projection of solution sequences onto rays determined by minimizers of $\Phi(u)/\| u\| ^p$. We now establish these two claims and then proceed directly to a proof of Theorem \ref{InvItThm}. 
% Compactness assertion 
\begin{lem}\label{DiscreteCompactness}
Assume $(g^j)_{j\in \N}\subset X$ converges to $g$ and $u^j$ is a solution of 
\begin{equation}\label{ujgjequation}
\partial \Phi(u^j)-\Jp(g^j)\ni 0
\end{equation}
for each $j\in \N$.  Then there is a subsequence $(u^{j_\ell})_{\ell\in \N}$ that converges to a solution $u$ of 
\begin{equation}\label{ugequation}
\partial \Phi(u)-\Jp(g)\ni 0.
\end{equation}
and $\Phi(u)=\lim_{\ell\rightarrow\infty}\Phi(u^{j_\ell})$
\end{lem}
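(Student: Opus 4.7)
The plan is to organize the argument into four steps: deriving uniform bounds on the sequence $(u^j)$, extracting a convergent subsequence via the compact sublevel-set hypothesis, upgrading that to convergence of the energies $\Phi(u^j)$, and finally passing to the limit in the subdifferential inclusion to recover \eqref{ugequation}.

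First, I would pick $\xi_j\in\partial\Phi(u^j)\cap\Jp(g^j)$ and note that \eqref{jp} gives $\|\xi_j\|_*=\|g^j\|^{p-1}$, so $(\xi_j)$ is bounded in $X^*$ since $g^j\to g$. Combining the Euler identity \eqref{Euler}, the duality bound $\langle\xi_j,u^j\rangle\le\|\xi_j\|_*\|u^j\|$, and the Poincar\'e inequality \eqref{Poincare} yields
$$
\lambda_p\|u^j\|^p\le p\Phi(u^j)=\langle\xi_j,u^j\rangle\le\|g^j\|^{p-1}\|u^j\|,
$$
hence $\|u^j\|\le\|g^j\|/\mu_p$, so both $(\|u^j\|)$ and $(\Phi(u^j))$ are bounded. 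The compactness of the sublevel sets of $\Phi$, together with lower semicontinuity, then produces a subsequence $u^{j_\ell}\to u$ with $u\in\dom(\Phi)$.

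Next, to obtain $\Phi(u)=\lim_\ell\Phi(u^{j_\ell})$, I would exploit that $u^j$ is the unique minimizer of $v\mapsto\Phi(v)-\langle\xi_j,v\rangle$. Testing with $v=u$,
$$
\Phi(u^j)\le\Phi(u)+\langle\xi_j,u^j-u\rangle\le\Phi(u)+\|\xi_j\|_*\|u^j-u\|,
$$
and since $\|\xi_j\|_*$ is bounded while $u^{j_\ell}\to u$ strongly, $\limsup_\ell\Phi(u^{j_\ell})\le\Phi(u)$, which matches the lower-semicontinuity bound.

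The delicate step is verifying $\partial\Phi(u)\cap\Jp(g)\neq\emptyset$. I would extract a further subsequence (not relabeled) so that $\xi_{j_\ell}$ converges weakly-$*$ to some $\xi\in X^*$; this relies on Banach--Alaoglu and is transparent for the separable Banach spaces of Table \ref{RayleighTable}. Passing to the limit in
$$
\Phi(v)\ge\Phi(u^{j_\ell})+\langle\xi_{j_\ell},v-u^{j_\ell}\rangle\quad\text{and}\quad\tfrac{1}{p}\|h\|^p\ge\tfrac{1}{p}\|g^{j_\ell}\|^p+\langle\xi_{j_\ell},h-g^{j_\ell}\rangle
$$
for arbitrary $v,h\in X$ then gives $\xi\in\partial\Phi(u)\cap\Jp(g)$. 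The cross terms such as $\langle\xi_{j_\ell},u^{j_\ell}\rangle$ are split as $\langle\xi_{j_\ell},u\rangle+\langle\xi_{j_\ell},u^{j_\ell}-u\rangle$: weak-$*$ convergence handles the first piece, and boundedness of $\|\xi_{j_\ell}\|_*$ together with $u^{j_\ell}\to u$ kills the second. The \emph{main obstacle} is precisely this dual-limit step; in a completely general Banach space one might instead appeal to a $\Gamma$-convergence argument for the affine perturbations $v\mapsto\Phi(v)-\langle\xi_j,v\rangle$, bypassing explicit weak-$*$ sequential compactness of $(\xi_j)$.
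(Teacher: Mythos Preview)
Your proof is correct and essentially matches the paper's argument: bound $\Phi(u^j)$ via \eqref{Euler}, \eqref{Poincare} and $\|\xi_j\|_*=\|g^j\|^{p-1}$, extract a convergent subsequence by compactness of sublevel sets, extract a weak-$*$ limit $\xi$ of $(\xi_{j_\ell})$ via Alaoglu, and pass to the limit in the defining inequalities to get $\xi\in\partial\Phi(u)\cap\Jp(g)$. The only cosmetic difference is that the paper obtains $\Phi(u^{j_\ell})\to\Phi(u)$ \emph{after} extracting $\xi$, by writing $p\Phi(u^{j_\ell})=\langle\xi_{j_\ell},u^{j_\ell}\rangle\to\langle\xi,u\rangle=p\Phi(u)$ via the Euler identity, whereas you deduce it beforehand from the minimization characterization of $u^j$; both are fine. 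Your caveat about weak-$*$ \emph{sequential} compactness in a non-separable $X$ is well taken, but the paper invokes Alaoglu in exactly the same way, so you are not introducing any additional gap.
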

\begin{proof}
As $u^j$ is a solution of \eqref{ujgjequation}, we can mimic \eqref{MonoEst} and exploit \eqref{Poincare} to derive
$$
p\Phi(u^j)\le \| g^j\| ^{p-1}\| u^j\| \le \| g^j\| ^{p-1}\left(\frac{p\Phi(u^j)}{\lambda_p}\right)^{1/p}.
$$
It follows that
$$
p\Phi(u^j)\le \frac{1}{\mu^p_p}\| g^j\| ^p 
$$
for each $j\in \N$.  Therefore, there is a subsequence $(u^{j_\ell})_{\ell\in \N}$ that converges to some $u\in \dom(\Phi)$. Moreover, 
there is $\xi^\ell\in \partial\Phi(u^{j_\ell})\cap \Jp(g^{j_\ell})$ with 
$$
\sup_{\ell\in \N}\| \xi^\ell\| _*=\sup_{\ell\in \N}\| g^{j_\ell}\| ^{p-1}<\infty.
$$
By Alaoglu's theorem, there is a subsequence $(\xi^{\ell_m})_{m\in \N}$ of $(\xi^{\ell})_{\ell\in \N}$ that converges weak-$*$ to some $\xi\in X^*$. By the convexity and lower semicontinuity of $\Phi$ and $\| \cdot\| ^p/p$, it is routine to verify that $\xi\in  \partial\Phi(u)\cap \Jp(g)$. That is, 
$u$ satisfies \eqref{ugequation}. Moreover, 
$$
\lim_{\ell\rightarrow\infty}p\Phi(u^{j_\ell})=\lim_{m\rightarrow\infty}\langle \xi^{\ell_m},u^{j_{\ell_m}}\rangle=
\langle\xi,u\rangle=p\Phi(u). 
$$
\end{proof}
\begin{lem}\label{SignLem}
Assume $w\in X\setminus\{0\}$ is a minimizer of $\Phi(z)/\| z\| ^p$ and $C>0$. There is $\delta=\delta(w,C)>0$ with the following property.  If $g\in X$ and $u\in \dom(\Phi)$ satisfy \eqref{ugequation} and 
\begin{enumerate}[(i)]
\item $\Phi(g)\ge \Phi(w)$
\item $\| g\| \le C$
\item $\alpha_w(g)\ge \frac{1}{2}$
\item 
$p\Phi(g)/\| g\| ^p\le \lambda_p+\delta$,
\end{enumerate}
then 
$$
\alpha_w(\mu_pu)\ge \frac{1}{2}.
$$
\end{lem}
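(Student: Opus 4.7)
I would argue by contradiction, using compactness and the simplicity of $\lambda_p$ to force a limit point on the ray $\{\beta w:\beta\ge 0\}$ and then upgrade the lower bound on $\beta$ using hypothesis (i). Suppose no such $\delta$ exists. Then for each $n\in\N$ I can choose $g^n\in X$ and $u^n\in\dom(\Phi)$ obeying the inverse iteration relation $\partial\Phi(u^n)-\Jp(g^n)\ni 0$ together with (i)--(iii) and (iv) with $\delta_n=1/n$, yet $\alpha_w(\mu_p u^n)<\tfrac{1}{2}$.

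Next I would extract a convergent subsequence of $(g^n)$. Hypotheses (ii) and (iv) give $\Phi(g^n)\le (\lambda_p+1)C^p/p$, so compactness of the sublevel sets of $\Phi$ yields $g^n\to g$ along a subsequence. The squeeze $\lambda_p\le p\Phi(g^n)/\|g^n\|^p\le \lambda_p+1/n$ from \eqref{Poincare} and (iv), combined with $\|g^n\|\to\|g\|$ and the lower semicontinuity of $\Phi$, gives $\Phi(g^n)\to\lambda_p\|g\|^p/p$ and $p\Phi(g)=\lambda_p\|g\|^p$. To exclude $g=0$, I use (iii) together with continuity of $\alpha_w$ at $0$ (Proposition~\ref{alphaWcont}(iii)): if $g=0$, then $\alpha_w(g^n)\to \alpha_w(0)=0$, contradicting $\alpha_w(g^n)\ge \tfrac{1}{2}$. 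So $g\ne 0$, $g$ is a minimizer of $\Phi(u)/\|u\|^p$, and by simplicity $g=\beta w$ for some $\beta\in\R$. Applying continuity of $\alpha_w$ at $\beta w$ to (iii) yields $\beta^+=\alpha_w(\beta w)\ge \tfrac{1}{2}$, hence $\beta\ge \tfrac{1}{2}>0$.

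At this stage I would invoke (i) to strengthen the bound on $\beta$. Since $\Phi(g^n)\to\Phi(g)=\beta^p\Phi(w)$ and $\Phi(g^n)\ge \Phi(w)$, passing to the limit gives $\beta^p\ge 1$, i.e., $\beta\ge 1$. Now apply Lemma~\ref{DiscreteCompactness} to the equation $\partial\Phi(u^n)-\Jp(g^n)\ni 0$: along a further subsequence, $u^n\to u$ with $\partial\Phi(u)-\Jp(\beta w)\ni 0$. Because $\beta w\ne 0$ is itself a minimizer of the Rayleigh quotient, Corollary~\ref{discreteUniqueness} pins down $u=\beta w/\mu_p$, so $\mu_p u^n\to \beta w$. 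Continuity of $\alpha_w$ at $\beta w$ then gives $\alpha_w(\mu_p u^n)\to \beta\ge 1$, contradicting $\alpha_w(\mu_p u^n)<\tfrac{1}{2}$.

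The main obstacle is coordinating all of the limits so that the weak hypothesis $\alpha_w(g)\ge \tfrac{1}{2}$ can be boosted to the strictly larger value $\beta\ge 1$ in the limit; the mere lower semicontinuity of $\Phi$ is insufficient, and one genuinely needs the energy convergence $\Phi(g^n)\to \Phi(g)$, which is why the squeeze from \eqref{Poincare} and (iv) is used in tandem with (i). Without this upgrade from $1/2$ to $1$ one would only recover $\alpha_w(\mu_p u)\ge \tfrac{1}{2}$ in the limit, which is too weak to contradict the strict inequality $\alpha_w(\mu_p u^n)<\tfrac{1}{2}$.
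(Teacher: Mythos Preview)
Your proof is correct and follows essentially the same route as the paper's: a contradiction argument, compactness to extract $g^n\to g$ with $\Phi(g^n)\to\Phi(g)$, simplicity to write $g=\beta w$, hypothesis (iii) to get $\beta^+\ge\tfrac12$, hypothesis (i) to upgrade to $\beta\ge 1$, and then Lemma~\ref{DiscreteCompactness} together with Corollary~\ref{discreteUniqueness} to identify $\mu_p u=\beta w$ and reach the contradiction. Your closing remark about why the upgrade from $\tfrac12$ to $1$ is genuinely needed is accurate and well observed.
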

\begin{proof}
Assume the assertion is false. Then there is a $w_0\in X\setminus\{0\}$ minimizing $\Phi(z)/\| z\| ^p$ and $C_0>0$ such that: for each $j\in \N$ there are $g^j\in X$ and $u^j\in\dom(\Phi)$ satisfying 
\eqref{ujgjequation} and
\begin{enumerate}[$(i)$]
\item $\Phi(g^j)\ge \Phi(w_0)$
\item $\| g^j\| \le C_0$
\item $\alpha_{w_0}(g^j)\ge \frac{1}{2}$
\item 
$p\Phi(g^j)/\| g^j\| ^p\le \lambda_p+\frac{1}{j}$,
\end{enumerate}
while 
\begin{equation}\label{AlphaLessHalf}
\alpha_{w_0}(\mu_pu^j)<\frac{1}{2}.
\end{equation}
\par As $(\Phi(g^j))_{j\in \N}$ is bounded, $(g^j)_{j\in\N}$ has a subsequence (that we will not relabel) that converges to $g\in X$.  Moreover, it is straightforward to verify that $g\neq 0$,  $g$ minimizes $\Phi(z)/\| z\| ^p$ and $\Phi(g)=\lim_{j\rightarrow\infty}\Phi(g^j)$.  By Lemma \ref{DiscreteCompactness}, there is a subsequence $(u^{j_\ell})_{\ell\in\N}$ that converges to 
some $u$ satisfying \eqref{ugequation} and $\Phi(u)=\lim_{\ell\rightarrow\infty}\Phi(u^{j_\ell})$. As $g\neq 0$, $u\neq 0$ and by inequality \eqref{DiscreteMonotonicity2}, 
$$
\lambda_p\le\frac{p\Phi(u)}{\| u\| ^p} =\lim_{\ell\rightarrow\infty}\frac{p\Phi(u^{j_\ell})}{\| u^{j_\ell}\| ^p}
\le \lim_{\ell\rightarrow\infty}\frac{p\Phi(g^{j_\ell})}{\| g^{j_\ell}\| ^p}=\lambda_p.
$$
Thus $u$ minimizes $\Phi(z)/\| z\| ^p$ and by Corollary \ref{discreteUniqueness}, it must be that $u=g/\mu_p$.

\par Since $\lambda_p$ is simple, $g$ and $w_0$ are linearly dependent and 
$g=\gamma w_0$ for some $\gamma\in \R$.  By part $(ii)$ of Proposition \ref{ProjProp}, $\alpha_{w_0}(g)=\gamma^+$. By part $(iii)$ of Proposition \ref{alphaWcont} and $(ii)$ above
$$
\gamma^+=\alpha_{w_0}(g)=\lim_{j\to \infty} \alpha_{w_0}(g^j)\ge \frac{1}{2}.
$$
Moreover, 
$$
\lambda_p\| g\| ^p=\Phi(g)=\lim_{j\rightarrow\infty}\Phi(g^j)\ge \Phi(w_0)=\lambda_p\| w_0\| ^p
$$ and so $\| g\| \ge \| w_0\| $. This forces 
$$
\gamma\ge 1. 
$$
However, by \eqref{AlphaLessHalf} and part $(iii)$ of Proposition \ref{alphaWcont}
$$
\gamma=\alpha_{w_0}(g)=\alpha_{w_0}(\mu_p u)
=\lim_{j\rightarrow\infty}\alpha_{w_0}(\mu_pu^j)\le \frac{1}{2}.
$$
We are able to conclude by this contradiction. 
\end{proof}

\begin{proof}[Proof of Theorem \ref{InvItThm}]
Set  $w_k:=\mu_{p}^{k}u_k$ and observe that
$$
0\in \partial \Phi(w_k)-\lambda_p \Jp(w_{k-1}), \quad k\in\N.
$$
Define $S=\lim_{k\rightarrow \infty}p\Phi(w_k)$ and $L=
\lim_{k\rightarrow \infty}\| w_k\| $; these limits exist by Lemma \ref{MonotonPhi}.  If $S=0$, $\lim_{k\rightarrow \infty}w_k=0$; so let us now assume $S>0$. 

\par As $\Phi$ has compact sublevel sets, $\left(w_k\right)_{k\in \N}$ has a 
convergent subsequence $\left(w_{k_j}\right)_{j\in \N}$ with limit $w$. We also have 
$\| w\| =L$, and by the lower semicontinuity of $\Phi$, $p\Phi(w)\le S$.  Selecting $\xi_{k-1}\in \Jp(w_{k-1})$ and $\zeta_k\in\partial\Phi(w_k)$, such that $\zeta_k-\lambda_p\xi_{k-1}=0$ for each $k\in \N$, and using \eqref{Euler} and \eqref{jp}, gives
\begin{align*}
p\Phi(w_{k_j})&=\langle\zeta_{k_j}, w_{k_j} \rangle \\
&=\lambda_p\langle\xi_{k_j-1}, w_{k_j} \rangle\\
&\le \lambda_p\| w_{k_j}\|  \| \xi_{k_j-1}\| _*\\
&= \lambda_p\| w_{k_j}\|  \| w_{k_j-1}\| ^{p-1}.
\end{align*}
Thus, 
$$
S=\limsup_{j\rightarrow\infty}p\Phi(w_{k_j})\le \lambda_p \| w\| ^p\le p\Phi(w).
$$

\par As a result, $S=p\Phi(w)$ and  $\lambda_p \| w\| ^p= p\Phi(w)$. Furthermore  
\begin{equation}\label{phicont}
\lim_{k\rightarrow \infty}\frac{p\Phi(u_k)}{\| u_k\| ^p}=\lim_{k\rightarrow \infty}\frac{p\Phi(w_k)}{\| w_k\| ^p}=\frac{ p\Phi(w)}{ \| w\| ^p}=\lambda_p
\end{equation}
and 
$$
\lim_{k\rightarrow \infty}\frac{\| u_{k-1}\| }{\| u_k\| }=\lim_{k\rightarrow \infty}\mu_p\frac{\| w_{k-1}\| }{\| w_k\| }=\mu_p.
$$

\par We are only left to verify that the limit $w$ is independent of the subsequence $(w_{k_j})_{j\in \N}$. To this end, we will employ Lemma 
\ref{DiscreteCompactness} and Lemma \ref{SignLem}. We first claim 
\begin{equation}\label{Thm1Clm}
w=\lim_{j\rightarrow\infty}w_{k_j+m}
\end{equation}
for each $m\in \N$. Observe that 
$$
\partial\Phi(\mu^{-1}_pw_{k_j+1})-\Jp(w_{k_j})\ni 0, \quad j\in \N.
$$
By Lemma \ref{DiscreteCompactness}, $(\mu^{-1}_pw_{k_j+1})_{j\in \N}$ has a subsequence that converges to $w_1\in \dom(\Phi)$ and $\partial\Phi(w_1)-\Jp(w)\ni 0$. It follows that $w_1\neq 0$, and by inequality \eqref{DiscreteMonotonicity2}, $\lambda_p \| w_1\| ^p= p\Phi(w_1)$. Corollary \ref{discreteUniqueness} then implies that $w_1=\mu^{-1}_pw$ and so \eqref{Thm1Clm} holds for $m=1$. The general assertion follows similarly by induction on $m\in \N$. 

\par Next we claim that there is a $j_0\in \N$ such that 
\begin{equation}\label{mDiscreteIneq}
\alpha_{w}(w_{k_j +m})\ge \frac{1}{2}
\end{equation}
whenever $j\ge j_0$, for each $m\in \N$. We will argue that any $j_0\in\N$ chosen so large that 
$$
\frac{p\Phi(w_{k_j})}{\| w_{k_j}\| ^p}\le \lambda_p +\delta\quad \text{and}\quad\alpha_{w}(w_{k_j })\ge \frac{1}{2}
$$
for $j\ge j_0$ will suffice; here $\delta:=\delta(\| u_0\| ,w)$ is the positive number in the statement of Lemma \ref{SignLem}. That such a $j_0$ exists follows from \eqref{phicont} and the continuity of $\alpha_w$ at $w$ (part $(iii)$ of Proposition \ref{alphaWcont}).

\par By the monotonicity inequality \eqref{DiscreteMonotonicity0}, $\| w_{k_j}\| \le \| u_{0}\| =:C$ for $j\in \N$ and by \eqref{DiscreteMonotonicity2}, $\Phi(w_{k_j})\ge \Phi(w)$ for $j\in \N$. Therefore,  
\begin{enumerate}[$(i)$]
\item $\Phi(w_{k_j})\ge \Phi(w)$
\item $\| w_{k_j}\| \le C$
\item $\alpha_{w}(w_{k_j})\ge \frac{1}{2}$
\item $\frac{\displaystyle p\Phi(w_{k_j})}{\displaystyle \| w_{k_j}\| ^p}\le \delta+\lambda_p$
\end{enumerate}
for $j\ge j_0$. By Lemma \ref{SignLem},
$$
\alpha_{w}(w_{k_j+1})\ge \frac{1}{2}
$$
for $j\ge j_0$. 

\par  Now suppose \eqref{mDiscreteIneq} holds for some $m\in \N$ and 
$$
\frac{p\Phi(w_{k_j+m})}{\| w_{k_j+m}\| ^p}\le \lambda_p+\delta \quad(j\ge j_0)
$$
hold for $m\in \N$. By the monotonicity inequalities 
\eqref{DiscreteMonotonicity0} and \eqref{DiscreteMonotonicity2},
\begin{enumerate}[$(i)$]
\item $\Phi(w_{k_j+m})\ge \Phi(w)$
\item $\| w_{k_j+m}\| \le C$
\item $\alpha_{w}(w_{k_j+m})\ge \frac{1}{2}$
\item $\frac{\displaystyle p\Phi(w_{k_{j+m}})}{\displaystyle \| w_{k_{j+m}}\| ^p}\le\delta+\lambda_p$ 
\end{enumerate}
for $j\ge j_0$.  We appeal to Lemma \ref{SignLem} again to conclude
$$
\alpha_{w}(w_{k_j+m+1})\ge \frac{1}{2}
$$
for $j\ge j_0$. In addition, \eqref{DiscreteMonotonicity2} implies 
$$
\frac{p\Phi(w_{k_j+m+1})}{\| w_{k_j+m+1}\| ^p}\le \lambda_p+\delta \quad(j\ge j_0)
$$
By induction, the claim \eqref{mDiscreteIneq} follows.  

\par  Now let $(w_{k_\ell})_{j\in \N}$ be another subsequence of $\left(w_{k}\right)_{k\in \N}$ 
that converges to some $w_1\in \dom(\Phi)$. The arguments above imply that $\| w_1\| =L$ and $p\Phi(w_1)=S$; in particular, $w_1\neq 0$ and
$\lambda_p\| w_1\| ^p=p\Phi(w_1)$. Since $\lambda_p$ is simple, $w_1=w$ or $w_1=-w$. Now suppose $w_1=-w$ and choose a subsequence $(k_{\ell_j})_{j\in \N}$ of  $(k_{\ell})_{\ell\in \N}$ so that 
$$
k_{\ell_j}>k_j, \quad j\in \N
$$ 
For $m_j:=k_{\ell_j}-k_j$, we have 
$$
\alpha_{w}(w_{k_{\ell_j}})=\alpha_{w}(w_{k_j +m_j})\ge \frac{1}{2}
$$
by \eqref{mDiscreteIneq} for $j\ge j_0$.  Passing to the limit and using the continuity of $\alpha_w$ at $-w$ (part $(iii)$ of Proposition \ref{alphaWcont}) gives 
$$
\alpha_{w}(-w)\ge \frac{1}{2}. 
$$
 But this cannot be the case as $\alpha_{w}(-w)=0$. As a result, every subsequence of $(w_k)_{k\in\N}$ has a further subsequence that converges to $w$. It follows that the sequence $(w_k)_{k\in\N}$ converges to $w$. 
\end{proof}
\begin{rem}\label{invnotsimple}
Without assuming that $\lambda_p$ is simple, our proof above verifies that if 
$S:=\lim_{k}p\Phi(\mu^k_pu_k)>0$ then
$$
\lambda_p=\lim_{k\rightarrow\infty}\frac{p\Phi(u_k)}{\| u_k\| ^p}\quad \text{and}\quad 
\mu_p=\lim_{k\rightarrow \infty}\frac{\| u_{k-1}\| }{\| u_k\| }.
$$
Furthermore, we did not need to suppose that $\lambda_p$ is simple in order to deduce the existence of a convergent subsequence $(\mu_p^{k_j}u_{k_j})_{j\in \N}$. Our argument also gives that if $S>0$, then $\lim_j\mu_p^{k_j}u_{k_j}$ is a minimizer of $\Phi(u)/\| u\| ^p$. 
\end{rem}

\begin{rem}
It may be that $\lim_{k\rightarrow\infty}\mu^k_pu_k=0$. To see this, we recall example that we discussed in the introduction with $X=\R^n$ equipped with the Euclidean norm, $p=2$ and $\Phi(u)=\frac{1}{2}Au\cdot u$. Here $A$ is an $n\times n$, symmetric, positive definite matrix with eigenvalues 
$$
0<\sigma_1<\sigma_2\le \dots\le \sigma_n.
$$
In this case, $\mu_2=\lambda_2=\sigma_1$. If $u_0$ is an eigenvector for $A$ corresponding to $\sigma_2$, then $u_k=\sigma_2^{-k}u_0$. In particular, 
$\mu_2^ku_k=(\sigma_1/\sigma_2)^{-k}u_0\rightarrow 0$ as $k\rightarrow \infty$. 
\end{rem}

\begin{ex}
Let us continue our discussion of Example \ref{HilbertSpaceEx} and assume $0<\sigma_1<\sigma_2$ which ensures that $\lambda_2$ is simple. Observe that inverse iteration \eqref{InverseIt} takes the form
$$
Au_{k}=u_{k-1}, \quad k\in \N
$$
If $u_0=\sum_{j\in\N}a_jz_j\in X$. Then 
$$
u_k=\sum_{j\in\N}a_j\sigma_j^{-k}z_j.
$$
Moreover, 
$$
\| \sigma_1^ku_k-a_1z_1\| ^2=\sum_{j\ge 2}a_j^2\left(\frac{\sigma_1}{\sigma_j}\right)^{2k}\rightarrow 0
$$
and 
$$
\frac{1}{2}(A(\sigma_1^ku_k),\sigma_1^ku_k)=\frac{1}{2}\sigma_1a_1^2+\frac{1}{2}\sum_{j\ge 2}a_j^2\left(\frac{\sigma_1}{\sigma_j}\right)^{2k}\sigma_j\rightarrow \frac{1}{2}\sigma_1a_1^2=\frac{1}{2}(A(a_1z_1),a_1z_1)
$$
as $k\rightarrow\infty$; the interchanges of sum and limit follow routinely by dominated convergence.  

\par Let us assume now that $a_1=(u_0,z_1)\neq 0$. In this case,
\begin{align*}
\frac{(Au_k,u_k)}{\| u_k\| ^2}&=\frac{\sum_{j\in\N}a_j^2\sigma_j^{-2k+1}}{\sum_{j\in\N}a_j^2\sigma_j^{-2k}}\\
&=\frac{a_1^2\sigma_1+\sum_{j\ge 2}a_j^2\sigma_j\left(\frac{\sigma_1}{\sigma_j}\right)^{2k}}{a_1^2+\sum_{j\ge 2}a_j^2\left(\frac{\sigma_1}{\sigma_j}\right)^{2k}}.
\end{align*}
Consequently, 
$$
\sigma_1=\lim_{k\rightarrow\infty}\frac{(Au_k,u_k)}{\| u_k\| ^2}.
$$
Similarly, direct computation gives 
$$
\sigma_1=\lim_{k\rightarrow\infty}\frac{\| u_{k-1}\| }{\| u_k\| }.
$$
As $\sigma_1=\lambda_2=\mu_2$, these calculations offer an alternative proof of Theorem \ref{InvItThm}. 
\end{ex}
\begin{ex}
We continue Example \ref{LpEx}, where inverse iteration involves the study of 
the sequence of boundary value problems
$$
\begin{cases}
-\Delta_pu_k=|u_{k-1}|^{p-2}u_{k-1}, \quad &x\in \Omega \\
\hspace{.33in}u_k=0, \quad &x\in \partial \Omega
\end{cases}.
$$
The function $u_0\in L^p(\Omega)$ is given and $k\in \N$. By Theorem \ref{InvItThm}, 
$$
w=\lim_{k\rightarrow\infty}\mu_p^ku_k
$$
exists in $W^{1,p}_0(\Omega)$; and if $w\neq 0$, then $w$ satisfies \eqref{pGroundState},
$$
\lambda_p=\lim_{k\rightarrow\infty}\frac{\displaystyle \int_\Omega|Du_k|^pdx}{\displaystyle\int_\Omega|u_k|^pdx}, \quad 
\text{and}\quad  
\mu_p=\lim_{k\rightarrow\infty}\frac{\left(\displaystyle\int_\Omega|u_{k-1}|^pdx\right)^{1/p}}{\left(\displaystyle\int_\Omega|u_{k}|^pdx\right)^{1/p}}.
$$
This result first verified in our previous work \cite{HyndLindgren} and was motivated by the paper of R. Biezuner, G. Ercole, and E. Martins \cite{biezuner}.
\end{ex}
\begin{ex}
Let us reconsider Example \ref{LpEx2}.  It is known that for this example that $\lambda_p$ is simple \cite{LindgrenLindqvist}. 
Moreover, inverse iteration involves the study of 
the sequence of PDE
$$
\begin{cases}
(-\Delta_p)^su_k=|u_{k-1}|^{p-2}u_{k-1}, \quad &x\in \Omega \\
\hspace{.53in}u_k=0, \quad &x\in \R^n\setminus\Omega
\end{cases}
$$
for $u_0\in L^p(\Omega)$ and $k\in \N$. Since 
$\lambda_p$ is simple, Theorem \ref{InvItThm} implies
$$
w=\lim_{k\rightarrow\infty}\mu_p^ku_k
$$
in $W^{s,p}_0(\Omega)$. And if $w\neq 0$, $w$ satisfies \eqref{pGroundStateMeasure},
$$
\lambda_p=\lim_{k\rightarrow\infty}\frac{\displaystyle\iint_{\R^n\times \R^n}\frac{|u_k(x)-u_k(y)|^p}{|x-y|^{n+sp}}dxdy}{\displaystyle\int_\Omega|u_k|^pdx}
\quad 
\text{and}\quad  
\mu_p=\lim_{k\rightarrow\infty}\frac{\left(\displaystyle\int_\Omega|u_{k-1}|^pdx\right)^{1/p}}{\left(\displaystyle\int_\Omega|u_{k}|^pdx\right)^{1/p}}.
$$
\end{ex}

%RobinEx
\begin{ex}
Regarding Example \ref{RobinEx}, inverse iteration takes the form: $u_0\in L^p(\Omega)$,
\begin{equation}
\begin{cases}
\hspace{1.57in} -\Delta_pu_k=|u_{k-1}|^{p-2}u_{k-1}\quad &x\in \Omega\\
|Du_k|^{p-2}Du_k\cdot \nu +\beta |u_k|^{p-2}u_k=0 \quad &x\in \partial \Omega
\end{cases}\nonumber
\end{equation}
for $k\in \N$. By Theorem \ref{InvItThm}, the limit $w=\lim_{k\rightarrow\infty}\mu_p^ku_k$ exists
in $W^{1,p}(\Omega)$. If $w\neq 0\in L^p(\Omega)$, then $w$ satisfies \eqref{RobinGroundState},
$$
\lambda_p=\lim_{k\rightarrow\infty}\frac{\displaystyle\int_\Omega|Du_k|^pdx+\beta\int_{\partial\Omega}|u_k|^pd\sigma}{\displaystyle\int_\Omega|u_k|^pdx}, \quad 
\text{and}\quad  
\mu_p=\lim_{k\rightarrow\infty}\frac{\left(\displaystyle\int_\Omega|u_{k-1}|^pdx\right)^{1/p}}{\displaystyle\left(\int_\Omega|u_{k}|^pdx\right)^{1/p}}.
$$
\end{ex}

% Neumann
\begin{ex} Let us revisit Example \ref{NeumannEx}. In this case, the inverse iteration scheme starts
with a given $u_0\in L^p(\Omega)$ with $\int_\Omega|u_{0}|^{p-2}u_{0}dx=0$. Then we must solve
\begin{equation}
\begin{cases}
\hspace{.63in} -\Delta_pu_k=|u_{k-1}|^{p-2}u_{k-1}\quad &x\in \Omega\\
|Du_k|^{p-2}Du_k\cdot \nu=0 \quad &x\in \partial \Omega
\end{cases} \nonumber
\end{equation}
with $\int_\Omega|u_{k-1}|^{p-2}u_{k-1}dx=0$ for $k\in \N$. If $\lambda_p$ is simple, Theorem \ref{InvItThm} implies that $w:=\lim_{k\rightarrow\infty}\mu_p^ku_k$ exists in $W^{1,p}(\Omega)$
and satisfies \eqref{pGroundStateNeumann} and $\int_\Omega|w|^{p-2}wdx=0$. If additionally $w\neq 0\in L^p(\Omega)$, then 
$$
\lambda_p=\lim_{k\rightarrow\infty}\frac{\displaystyle\int_\Omega|Du_k|^pdx}{\displaystyle\int_\Omega|u_k|^pdx}\quad 
\text{and}\quad  
\mu_p=\lim_{k\rightarrow\infty}\frac{\left(\displaystyle\int_\Omega|u_{k-1}|^pdx\right)^{1/p}}{\left(\displaystyle\int_\Omega|u_{k}|^pdx\right)^{1/p}}.
$$
 
\end{ex}

\begin{ex}
Inverse iteration related to Example \ref{CzeroEx} is as follows: for $u_0\in C(\overline{\Omega})$, solve
$$
\begin{cases}
-\Delta_pu_k=\xi_{k-1}\quad &x\in \Omega\\
\hspace{.33in} u_{k}=0 \quad &x\in \partial \Omega
\end{cases}
$$
for each $k\in \N$. Here $\xi_{k-1}\in \Jp(u_{k-1})$. In particular, 
$$
\int_\Omega|Du_k|^{p-2}Du_k\cdot D\phi dx=\int_{\Omega}\phi(x)d\xi_{k-1}(x)
$$
for each $\phi\in W^{1,p}_0(\Omega)$ and 
$$
 \int_{\overline\Omega}\phi(x)d\xi_{k-1}(x)\le\max\{|u_{k-1}(x)|^{p-2}u_{k-1}(x)\phi(x): x\in \overline{\Omega}, |u_{k-1}(x)|=\|u_{k-1}\|_\infty\}
$$
for each $\phi\in C(\overline\Omega)$.
\par As we previously explained, $\lambda_p$ is simple when $\Omega$ is a ball.  In this case, $w(x)=\lim_{k\rightarrow\infty}\mu_p^ku_k(x)$ exists uniformly for $x\in\Omega$. If $w$ does not vanish identically, $w$ satisfies equation \eqref{pGroundStateMeasure},
$$
\lambda_p=\lim_{k\rightarrow\infty}\frac{\displaystyle\int_\Omega|Du_k|^pdx}{\displaystyle\| u_k\| ^p_\infty}, \quad 
\text{and}\quad  
\mu_p=\lim_{k\rightarrow\infty}\frac{\| u_{k-1}\| _\infty}{\| u_{k}\| _\infty}.
$$
\end{ex}
%Trace
\begin{ex}
Let us recall Example \ref{TraceEx}, which involves the norm of the trace operator $T: W^{1,p}(\Omega)\rightarrow L^p(\partial\Omega,\sigma)$. We now describe an inverse iteration scheme  for this example. For a given $u_0\in W^{1,p}(\Omega)$ with $Tu_0\neq 0\in L^p(\partial\Omega, \sigma)$, find a sequence 
$(u_k)_{k\in \N}$ verifying  
$$
\int_\Omega(|Du_k|^{p-2}Du_k\cdot D\phi + |u_k|^{p-2}u_k\phi)dx=\int_{\partial\Omega}|Tu_{k-1}|^{p-2}Tu_{k-1}\cdot(T\phi)d\sigma
$$
for $\phi\in W^{1,p}(\Omega)$. This version of inverse iteration is a weak formulation of the sequence of boundary value problems
\begin{equation}
\begin{cases}
-\Delta_pu_k+|u_k|^{p-2}u_k=0\quad &x\in \Omega\\
\hspace{.21in} |Du_k|^{p-2}Du_k\cdot\nu=|u_{k-1}|^{p-2}u_{k-1} \quad &x\in \partial \Omega
\end{cases}.\nonumber
\end{equation}
Such a solution sequence exists and $u_k$ minimizes the functional 
$$
W^{1,p}(\Omega)\ni v\mapsto \frac{1}{p}\| v\| _{W^{1,p}(\Omega}^p-
\int_{\partial\Omega}(|Tu_{k-1}|^{p-2}Tu_{k-1})(Tv)d\sigma
$$ for each $k\in \N$.

\par It is possible to verify that $Tu_{0}\neq 0$ implies $Tu_{k}\neq 0\in L^p(\partial\Omega, \sigma)$ for all $k\in \N$. Moreover, we have the following monotonicity formulae
$$
\| u_k\| _{W^{1,p}(\Omega)}\le \frac{1}{\mu_p}\| u_{k-1}\| _{W^{1,p}(\Omega)},\quad
\| Tu_k\| _{L^{p}(\partial\Omega)}\le \frac{1}{\mu_p}\| Tu_{k-1}\| _{L^{p}(\Omega)},
$$
and 
$$
\frac{\| u_k\| ^p_{W^{1,p}(\Omega)}}{\| Tu_k\| ^p_{L^{p}(\partial\Omega)}}
\le \frac{\| u_{k-1}\| ^p_{W^{1,p}(\Omega)}}{\| Tu_{k-1}\| ^p_{L^{p}(\partial\Omega)}}.
$$
Here $\mu_p:=\lambda_p^{1/(p-1)}=\| T\| ^{-q}$, and the arguments given in Lemma \ref{MonotonPhi} and Lemma \ref{DiscMonotoneQuotients} are readily adapted in this setting. 

\par Recall that $\lambda_p$ is simple in the sense that any two functions for which equality holds in 
\eqref{TraceNorm} are linearly dependent.  The same line of reasoning given in Theorem \ref{InvItThm} gives that $w:=\lim_{k\rightarrow\infty}\mu_p^ku_k$ exists in 
$W^{1,p}(\Omega)$. If $Tw\neq 0\in L^p(\partial\Omega;\sigma)$, then $w$ satisfies the boundary value problem \eqref{SteklovEivalueProb} and 
$$
\| T\| =\lambda^{-1/p}_p=\lim_{k\rightarrow\infty}\frac{\| Tu_k\| _{L^{p}(\partial\Omega)}}{\| u_k\| _{W^{1,p}(\Omega)}}
$$
Therefore, even though Example \ref{TraceEx} did not exactly fit into our framework, the ideas that went into proving Theorem \ref{InvItThm} yield an analogous result. 
\end{ex}

%%%%%%%%%%%%%%%%%%%%%%%% DNE %%%%%%%%%%%%%%%%%%%%%%%%%%%
\section{Curves of maximal slope}\label{EvolSec}
We will now pursue the large time behavior of solutions $v:[0,\infty)\rightarrow X$ of the doubly nonlinear evolution \eqref{mainDNEold}
$$
\Jp(\dot v(t))+\partial\Phi(v(t))\ni 0\quad(a.e.\; t>0).
$$
As we discussed in the introduction, our plan is study the large time behavior of a more general class of paths called 
$p$-curves of maximal slope for $\Phi$.  These paths satisfy \eqref{mainDNEold} when they are differentiable almost everywhere.
This goal will require us to recall the concepts of absolute continuity in a Banach space, the metric derivative of an absolutely 
continuous path and the local slope of a convex functional. Our primary reference for this background material is the monograph by 
L. Ambrosio, N. Gigli, and G. Savar\'{e} \cite{AGS}, which gives a comprehensive account of curves of maximal slope in metric spaces.  Other results for the large time behavior of doubly nonlinear flows can be found in \cite{Akagi, Rossi, Rossi2, Segatti}.

% Abs cont 
\par  For $r\in [1,\infty]$, a path $v: [0,T]\rightarrow X$ is {\it $r$-absolutely continuous} if there is $h\in L^r([0,T])$ such that 
\begin{equation}\label{ACcond}
\| v(t)-v(s)\| \le \int^t_sh(\tau)d\tau
\end{equation}
for each $s,t\in[0,T]$ with $s\le t$. In this case, we write $v\in AC^r([0,T];X)$ and $v\in AC([0,T];X)$ when $r=1$. It turns out that
$$
|\dot{v}|(t):=\lim_{h\rightarrow 0}\frac{\| v(t+h)-v(t)\| }{|h|}
$$
exists for almost every $t>0$ and $|\dot{v}|\le h$; furthermore, \eqref{ACcond} holds with $|\dot v|$ replacing $h$ (Theorem 1.1.2 of \cite{AGS}). 
The function $|\dot{v}|\in L^r([0,T])$ is called the {\it metric derivative} of $v$.  A path $v: [0, \infty)\rightarrow X$ is {\it locally $r$-absolutely continuous }if the restriction of $v$ to $[0,T]$ is $r$-absolutely continuous for each $T>0$. As above, we will write $v\in AC^r_{\text{loc}}([0,\infty);X)$ and $v\in AC_{\text{loc}}([0,\infty);X)$ when $r=1$.

% Slope 
\par Suppose $\Psi: X\rightarrow (-\infty,\infty]$ is convex, proper and lower semicontinuous. The quantity
$$
|\partial\Psi|(u):=\limsup_{z\rightarrow 0}\frac{(\Psi(u)-\Psi(u+z))^+}{\| z\| }
$$
is the {\it local slope} of $\Psi$ at $u$.  The functional $u\mapsto |\partial\Psi|(u)$ is lower semicontinuous and is equal to the smallest norm that
elements of $\partial\Psi(u)$ may assume
\begin{equation}\label{SpecialSlope}
|\partial\Psi|(u)=\inf\left\{\| \xi\| _*: \xi\in\partial\Psi(u)\right\}
\end{equation}
(Proposition 1.4.4 of \cite{AGS}).  If  $v\in AC([0,T];X)$ and  $\Psi\circ v$ is 
absolutely continuous, then
\begin{equation}\label{ChainBound}
\left|\frac{d}{dt}(\Psi\circ v)(t)\right|\le |\partial\Psi|(v(t))|\dot{v}|(t),
\end{equation}  
for almost every $t\in [0,T]$. A useful fact is that if the product $(|\partial\Psi|\circ v)|\dot v|\in L^1([0,T])$, then $\Psi\circ v$ is absolutely continuous (Remark 1.4.6 of \cite{AGS}).

% p curve of maximal slope 
\par We now have all the necessary ingredients to define a curve of maximal slope.
\begin{defn}\label{pCurveDef}
A {\it $p$-curve of maximal slope for $\Phi$} is a path $v\in AC_{\text{loc}}([0,\infty); X)$ 
that satisfies 
\begin{equation}\label{mainDNEnew}
\frac{d}{dt}(\Phi\circ v)(t)\le - \frac{1}{p}|\dot{v}|^p(t)-\frac{1}{q}|\partial\Phi|^q(v(t)) 
\end{equation} 
for almost every $t>0.$ 
\end{defn}
Observe that for any $p$-curve of maximal slope for $\Phi$, $\Phi\circ v$ is nonincreasing. Therefore, \mbox{$\Phi\circ v$} is
differentiable at all but countably many times $t>0$, provided $v(0)\in \dom(\Phi)$.  Combining \eqref{ChainBound} with \eqref{mainDNEnew} gives
that 
$$
- |\partial\Phi|(v(t))|\dot{v}|(t)\le \frac{d}{dt}(\Phi\circ v)(t)\le - \frac{1}{p}|\dot{v}|^p(t)-\frac{1}{q}|\partial\Phi|^q(v(t))
$$
for almost every $t>0$. Consequently,
equality holds in \eqref{mainDNEnew} and
\begin{equation}\label{AprioriEst}
\frac{d}{dt}\Phi(v(t))=-|\dot{v}|^p(t)=-|\partial\Phi|^q(v(t))
\end{equation} 
for almost every $t>0$. In particular, 
\begin{equation}\label{AprioriEst2}
\int^t_s\left(\frac{1}{p}|\dot{v}|^p(\tau)+\frac{1}{q}|\partial\Phi|^q(v(\tau))\right)d\tau+\Phi(v(t))=\Phi(v(s)), \quad 0\le s\le t<\infty,
\end{equation} 
$v\in AC^p_{\text{loc}}([0,\infty); X)$ and $|\partial\Phi|\circ v\in L^q_{\text{loc}}[0,\infty)$. 

\begin{rem}\label{partialPhiNonEmpty}
An important point that will be used below is as follows.  Suppose $v$ is a $p$-curve of maximal slope. Since $|\partial\Phi|\circ v\in L^q_{\text{loc}}[0,\infty)$, $|\partial\Phi|(v(t))$ is finite for almost every $t\ge 0$. It must be that $\partial\Phi(v(t))\neq\emptyset$ at any such time; for if $\partial\Phi(v(t))=\emptyset$, then $|\partial\Phi|(v(t))=\infty$ by \eqref{SpecialSlope}.  
\end{rem}

% Equivalence with DNE 

\par Let us now argue that differentiable $p$-curves of maximal slope for $\Phi$ satisfy \eqref{mainDNEold} and 
conversely; see also Proposition 1.4.1 of \cite{AGS}.  Along the way, we will use a routine fact that if $v\in AC_{\text{loc}}([0,\infty); X)$ is differentiable almost everywhere, 
then $\| \dot v(t)\| =|\dot v|(t)$ for almost every $t>0$.
\begin{prop}\label{diffimpliesslope}
Suppose $v\in AC_{\text{loc}}([0,\infty); X)$ is differentiable almost everywhere and $v(0)\in \dom(\Phi)$. Then $v$ is a $p$-curve of maximal slope for $\Phi$ if
 and only if $v$ satisfies \eqref{mainDNEold}. 
\end{prop}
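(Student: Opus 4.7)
The plan is to handle both directions by matching elements of $\partial\Phi(v(t))$ with those of $\Jp(\dot v(t))$ via the Fenchel--Young characterization of $\Jp$ in \eqref{jp} and the identity \eqref{SpecialSlope}, which expresses $|\partial\Phi|(v(t))$ as the infimal dual norm over $\partial\Phi(v(t))$. The workhorse in both directions is a \emph{sandwich} consequence of convexity: for any $\zeta \in \partial\Phi(v(t))$, the inequality $\Phi(v(t+h)) \ge \Phi(v(t)) + \langle \zeta, v(t+h) - v(t)\rangle$, divided by $h$ and passed to the limit as $h\to 0^\pm$, shows that the upper-left and lower-right Dini derivatives of $\Phi\circ v$ at $t$ straddle $\langle \zeta, \dot v(t)\rangle$ whenever $v$ is differentiable at $t$. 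Consequently, if $\Phi\circ v$ is additionally differentiable at $t$, $\frac{d}{dt}\Phi(v(t)) = \langle \zeta,\dot v(t)\rangle$ for every such $\zeta$.

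For the forward direction, the absolute continuity of $\Phi\circ v$ is built into the definition through \eqref{AprioriEst2}, and the identity \eqref{AprioriEst} gives $\frac{d}{dt}\Phi(v(t)) = -|\dot v|^p(t) = -|\partial\Phi|^q(v(t))$ at a.e.\ $t$; hence by \eqref{SpecialSlope}, $|\partial\Phi|(v(t)) = |\dot v|^{p-1}(t)$. Since $\partial\Phi(v(t))$ is convex and weak-$*$ closed in $X^*$, Banach--Alaoglu together with the weak-$*$ lower semicontinuity of $\|\cdot\|_*$ produces a minimizer $\zeta\in\partial\Phi(v(t))$ with $\|\zeta\|_* = |\dot v|^{p-1}(t)$. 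By the sandwich, $\langle \zeta,\dot v(t)\rangle = -|\dot v|^p(t)$, and using $\|\dot v(t)\| = |\dot v|(t)$ together with $\|\zeta\|_*^q = |\dot v|^{(p-1)q}(t) = |\dot v|^p(t)$, one checks directly that
$$\langle -\zeta,\dot v(t)\rangle = |\dot v|^p(t) = \tfrac{1}{p}\|\dot v(t)\|^p + \tfrac{1}{q}\|\zeta\|_*^q,$$
i.e.\ $-\zeta\in \Jp(\dot v(t))$. Hence $0\in \Jp(\dot v(t))+\partial\Phi(v(t))$, establishing \eqref{mainDNEold}.

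Conversely, under \eqref{mainDNEold} I would select $\zeta(t)\in \partial\Phi(v(t))$ with $-\zeta(t)\in \Jp(\dot v(t))$ at a.e.\ $t$. Reading off from \eqref{jp} together with $\|\dot v(t)\| = |\dot v|(t)$ gives $\|\zeta(t)\|_* = |\dot v|^{p-1}(t)$ and $\langle \zeta(t),\dot v(t)\rangle = -|\dot v|^p(t)$, and in particular $|\partial\Phi|^q(v(t)) \le \|\zeta(t)\|_*^q = |\dot v|^p(t)$. Once we know $\Phi\circ v$ is differentiable a.e., the sandwich forces $\frac{d}{dt}\Phi(v(t)) = -|\dot v|^p(t)$, which combined with $\tfrac{1}{p}+\tfrac{1}{q}=1$ and $|\partial\Phi|^q \le |\dot v|^p$ immediately yields the maximal-slope inequality \eqref{mainDNEnew}. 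The main obstacle is supplying the necessary absolute continuity of $\Phi\circ v$ in this direction, since we only assumed $v\in AC_{\text{loc}}$. The plan is to bootstrap: from the Dini estimate $D^-(\Phi\circ v)(t) \le -|\dot v|^p(t) \le 0$ combined with the lower semicontinuity of $\Phi\circ v$, a standard monotonicity argument yields that $\Phi\circ v$ is nonincreasing and hence of bounded variation, with absolutely continuous part dominated pointwise by $-|\dot v|^p$. Integrating, $\int_0^T |\dot v|^p\,d\tau \le \Phi(v(0)) - \Phi(v(T)) \le \Phi(v(0)) < \infty$, so $v\in AC^p_{\text{loc}}$ and $(|\partial\Phi|\circ v)\,|\dot v| \le |\dot v|^p \in L^1_{\text{loc}}$. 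Remark~1.4.6 of \cite{AGS} then delivers $\Phi\circ v \in AC_{\text{loc}}$, legitimizing all the pointwise derivative computations above and closing the argument.
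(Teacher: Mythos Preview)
Your argument follows the same route as the paper's---the chain-rule identity $(\Phi\circ v)'(t)=\langle\zeta,\dot v(t)\rangle$ for $\zeta\in\partial\Phi(v(t))$, combined with the characterization \eqref{jp} of $\Jp$---and in the forward direction you are in fact more careful: the paper picks an arbitrary $\xi(t)\in\partial\Phi(v(t))$ and asserts $-\xi(t)\in\Jp(\dot v(t))$, but that conclusion requires $\|\xi(t)\|_*^q=\|\dot v(t)\|^p$, which is only guaranteed for the minimal-norm element you select via \eqref{SpecialSlope}.

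In the converse direction you rightly observe that the paper's ``by the chain rule'' presupposes $(\Phi\circ v)'$ exists a.e., and you attempt a bootstrap. Your monotonicity step, however, does not work as stated: a lower-semicontinuous function whose upper-left Dini derivative is $\le 0$ \emph{almost everywhere} need not be nonincreasing---the Cantor staircase is continuous with derivative zero a.e.\ yet strictly increasing. Without monotonicity you cannot obtain the integral bound $\int_0^T|\dot v|^p\,d\tau\le\Phi(v(0))$, and without that you cannot invoke Remark~1.4.6 of \cite{AGS}. The cleanest repair is to strengthen the hypothesis to $v\in AC^p_{\mathrm{loc}}([0,\infty);X)$ (as in Proposition~1.4.1 of \cite{AGS}), so that $(|\partial\Phi|\circ v)\,|\dot v|\le|\dot v|^p\in L^1_{\mathrm{loc}}$ is immediate and the chain rule is justified; the paper's proof tacitly relies on this as well.
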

\begin{proof}
Suppose $v$ is an almost everywhere differentiable $p$-curve of maximal slope for $\Phi$. As indicated in Remark \ref{partialPhiNonEmpty}, there exists $\xi(t)\in\partial\Phi(v(t))$ for almost every $t\ge 0$.
By the chain rule and \eqref{AprioriEst}, 
$$
\frac{d}{dt}(\Phi\circ v)(t)=\langle\xi(t),\dot v(t)\rangle=-\| \dot{v}(t)\| ^p
$$
for almost every $t>0$. This implies, $-\xi(t)\in\Jp(\dot v(t))$ and so $v$ satisfies \eqref{mainDNEold}. 

\par Conversely, suppose that $v$ satisfies \eqref{mainDNEold} and select $\xi(t)\in\partial\Phi(v(t))\cap(-\Jp(\dot v(t)))$ for almost every $t\ge 0$.
By the chain rule and \eqref{SpecialSlope},
\begin{align*}
\frac{d}{dt}(\Phi\circ v)(t)&=\langle\xi(t),\dot v(t)\rangle\\
&=-\frac{1}{p}\| \dot{v}(t)\|^p-\frac{1}{q}\| \xi(t)\| ^q_*\\
&=-\frac{1}{p}|\dot{v}|^p(t)-\frac{1}{q}\| \xi(t)\| ^q_*\\
&\le -\frac{1}{p}|\dot{v}|^p(t)-\frac{1}{q}|\partial\Phi|^q(v(t))
\end{align*}
for almost every $t>0$.
\end{proof}

% what we do 
\par We now resume our goal of proving Theorem \ref{DNEthm}, which characterizes the large time behavior of $p$-curves of maximal 
slope for $\Phi$.  We will not discuss the existence of such curves as this already has been established (see Chapters 1--3 in \cite{AGS}) and because there has been a plethora of existence results for doubly nonlinear evolutions \cite{Arai, Barbu, Colli, Colli2, Mielke}. However, crucial to our proof of Theorem  \ref{DNEthm} is a compactness 
result (Lemma \ref{CompactLem}) which is inspired by previous existence results.   We begin our study by deriving various estimates on $p$-curves of maximal slope for $\Phi$.   We assume for the remainder of this section that $\lambda_p$ is simple.

% Estimate
\begin{lem}\label{usefullemma}
Suppose that $v$ is a $p$-curve of maximal slope for $\Phi$ with $v(0)\in \dom(\Phi)$. Then 
\begin{equation}\label{UsefulEstimate}
p\Phi(v(t))\le\frac{1}{\mu_p}|\dot{v}|^p(t)
\end{equation}
for almost every $t\ge 0$. 
\end{lem}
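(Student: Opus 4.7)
The plan is to combine the Euler identity \eqref{Euler}, the characterization of the metric slope \eqref{SpecialSlope}, the Poincaré-type inequality \eqref{Poincare}, and the energy identity \eqref{AprioriEst} for curves of maximal slope. First I fix a time $t \ge 0$ at which the energy identity \eqref{AprioriEst} holds and $|\partial\Phi|(v(t)) < \infty$; such $t$ form a set of full measure. By Remark \ref{partialPhiNonEmpty}, at any such $t$ the subdifferential $\partial\Phi(v(t))$ is nonempty, and since $\Phi\circ v$ is nonincreasing with $v(0)\in\dom(\Phi)$, we have $v(t)\in\dom(\Phi)$ as well.

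Next, for any $\xi\in\partial\Phi(v(t))$, the Euler identity \eqref{Euler} together with the duality bound $\langle\xi,v(t)\rangle\le \|\xi\|_\ast\|v(t)\|$ yields $p\Phi(v(t))\le \|\xi\|_\ast\|v(t)\|$. Taking the infimum over $\xi\in\partial\Phi(v(t))$ and invoking \eqref{SpecialSlope} gives
\begin{equation*}
p\Phi(v(t))\le |\partial\Phi|(v(t))\,\|v(t)\|.
\end{equation*}
The Poincaré inequality \eqref{Poincare} then bounds $\|v(t)\|\le \bigl(p\Phi(v(t))/\lambda_p\bigr)^{1/p}$, so after substituting and rearranging I arrive at
\begin{equation*}
\bigl(p\Phi(v(t))\bigr)^{1-1/p}\le \frac{|\partial\Phi|(v(t))}{\lambda_p^{1/p}}.
\end{equation*}

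Raising both sides to the power $q=p/(p-1)$ converts the left-hand exponent $(1-1/p)q$ back to $1$, and the right-hand factor becomes $|\partial\Phi|^q(v(t))/\lambda_p^{q/p}$. Since $q/p=1/(p-1)$ and $\mu_p=\lambda_p^{1/(p-1)}$, this simplifies to
\begin{equation*}
p\Phi(v(t))\le \frac{1}{\mu_p}\,|\partial\Phi|^q(v(t)).
\end{equation*}
Finally, the energy identity \eqref{AprioriEst} replaces $|\partial\Phi|^q(v(t))$ with $|\dot v|^p(t)$ and produces \eqref{UsefulEstimate}. The only point requiring a little care is justifying that the estimate holds at every $t$ in a full-measure set, which is precisely why the nonemptiness of $\partial\Phi(v(t))$ (supplied by Remark \ref{partialPhiNonEmpty}) and the a.e.\ validity of \eqref{AprioriEst} are invoked at the outset; the rest is a short algebraic manipulation.
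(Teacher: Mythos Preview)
Your proof is correct and follows essentially the same route as the paper: Euler identity \eqref{Euler} plus the dual norm bound give $p\Phi(v(t))\le |\partial\Phi|(v(t))\|v(t)\|$, Poincar\'e \eqref{Poincare} controls $\|v(t)\|$, and the energy identity \eqref{AprioriEst} converts slope to metric derivative. The only cosmetic difference is that the paper invokes \eqref{AprioriEst} (in the form $|\partial\Phi|(v(t))=|\dot v|^{p-1}(t)$) before applying Poincar\'e rather than after, and it selects a norm-minimizing $\xi(t)\in\partial\Phi(v(t))$ instead of taking an infimum; neither affects the substance.
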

\begin{proof} Select $\xi(t)\in \partial\Phi(v(t))$ such that $\| \xi(t)\| _*=|\partial\Phi|(v(t))$ for almost every $t\ge 0$; such 
a $\xi(t)$ exists by Remark \ref{partialPhiNonEmpty} and the fact that $\partial\Phi(v(t))$ is weak-$*$ closed (Proposition 1.4.4 in \cite{AGS}). 
We have by \eqref{Euler} and \eqref{AprioriEst}
\begin{align*}
p\Phi(v(t))&=\langle \xi(t),v(t)\rangle\\
&\le |\partial\Phi|(v(t))\| v(t)\| \\
&=|\dot{v}|^{p-1}(t)\| v(t)\| \\
&\le |\dot{v}|^{p-1}(t)\left(\frac{p}{\lambda_p}\Phi(v(t))\right)^{1/p}.
\end{align*}
Consequently, \eqref{UsefulEstimate} holds for almost every $t>0$.  
\end{proof}
\begin{cor}\label{ScaledDecrease} Assume that $v$ is a $p$-curve of maximal slope for $\Phi$ with $v(0)\in \dom(\Phi)$. Then
$$
\frac{d}{dt}\left[e^{p\mu_p t}\Phi(v(t))\right]\le 0
$$
for almost every $t\ge 0$. In particular, 
\begin{equation}\label{vGoToZeroRate}
\Phi(v(t))\le e^{-p\mu_p t}\Phi(v(0))
\end{equation}
for $t\ge 0$. 
\end{cor}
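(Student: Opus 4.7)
The plan is to differentiate the product $e^{p\mu_p t}\Phi(v(t))$ directly and apply the pointwise bound from Lemma \ref{usefullemma} together with the identity \eqref{AprioriEst}. First I would note that because $\Phi \circ v$ is absolutely continuous on each bounded subinterval of $[0,\infty)$ (this is built into \eqref{AprioriEst2}, which expresses $\Phi(v(t))$ as $\Phi(v(0))$ minus an integral of an $L^1_{\mathrm{loc}}$ function), and because $t \mapsto e^{p\mu_p t}$ is $C^1$ and locally bounded, the product $t \mapsto e^{p\mu_p t}\Phi(v(t))$ is locally absolutely continuous. Hence its derivative can be computed by the product rule at almost every $t \ge 0$.

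Next I would write, for a.e.\ $t \ge 0$,
\begin{align*}
\frac{d}{dt}\bigl[e^{p\mu_p t}\Phi(v(t))\bigr]
&= p\mu_p e^{p\mu_p t}\Phi(v(t)) + e^{p\mu_p t}\frac{d}{dt}\Phi(v(t)) \\
&= e^{p\mu_p t}\bigl[p\mu_p \Phi(v(t)) - |\dot v|^p(t)\bigr] \\
&= \mu_p e^{p\mu_p t}\Bigl[p\Phi(v(t)) - \tfrac{1}{\mu_p}|\dot v|^p(t)\Bigr],
\end{align*}
using \eqref{AprioriEst} in the second line. By Lemma \ref{usefullemma}, the bracketed quantity is nonpositive for a.e.\ $t \ge 0$, which gives the claimed inequality $\frac{d}{dt}[e^{p\mu_p t}\Phi(v(t))] \le 0$.

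To deduce \eqref{vGoToZeroRate}, I would then integrate this inequality on $[0,t]$, using that $e^{p\mu_p s}\Phi(v(s))$ is absolutely continuous and hence equal to its integral plus its value at $0$. This yields $e^{p\mu_p t}\Phi(v(t)) \le \Phi(v(0))$, which is \eqref{vGoToZeroRate} after multiplying by $e^{-p\mu_p t}$.

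There is no real obstacle here; the only subtle point is justifying absolute continuity of the product so that the product rule and the fundamental theorem of calculus apply, and this follows immediately from \eqref{AprioriEst2}. Note that we do not need almost everywhere differentiability of the path $v$ itself, only of $\Phi \circ v$, which is automatic from the energy identity \eqref{AprioriEst2} since $\Phi \circ v$ is monotone and its increments are given by integrals of an $L^1_{\mathrm{loc}}$ density.
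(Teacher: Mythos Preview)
Your proposal is correct and follows essentially the same approach as the paper: combine the identity \eqref{AprioriEst} with the estimate of Lemma~\ref{usefullemma} to obtain the differential inequality $\frac{d}{dt}\Phi(v(t))\le -p\mu_p\Phi(v(t))$, then conclude by the product rule and integration. The paper's proof is terser and omits the absolute continuity justification you supply, but the argument is the same.
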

\begin{proof}
From the previous claim 
$$
\frac{d}{dt}\Phi(v(t))=-|\dot{v}|^p(t)\le -p\mu_p\Phi(v(t)),
$$
and so $\frac{d}{dt}\left[e^{p\mu_p t}\Phi(v(t))\right]\le 0$ for almost every $t\ge 0$.  The inequality \eqref{vGoToZeroRate} is now immediate. 
\end{proof}
It will also be important for us to estimate the derivative of $t\mapsto \| v(t)\| ^p$, where $v$ is a locally absolutely continuous path.  
\begin{lem}\label{PowerAbs}
If $v\in AC_{\text{loc}}([0,\infty); X)$ and $p\ge 1$, then 
$$
[0,\infty)\ni t\mapsto \frac{1}{p}\| v(t)\| ^p
$$ 
is locally absolutely continuous and 
\begin{equation}\label{AbsContPower}
\left|\frac{d}{dt}\frac{1}{p}\| v(t)\| ^p\right|\le \| v(t)\| ^{p-1}|\dot{v}|(t).
\end{equation}
for almost every $t\ge 0$.
\end{lem}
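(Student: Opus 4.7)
The plan is to reduce everything to the absolute continuity of $t \mapsto \|v(t)\|$ and then handle the power $p$ via a Lipschitz-composition argument. I split the proof into three steps.

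First, I would establish that $t \mapsto \|v(t)\|$ is itself locally absolutely continuous with metric derivative bounded by $|\dot v|$. The reverse triangle inequality gives
\[
\bigl|\,\|v(t)\| - \|v(s)\|\,\bigr| \le \|v(t) - v(s)\| \le \int_s^t h(\tau)\,d\tau
\]
for $0\le s\le t\le T$, where $h\in L^1([0,T])$ is the dominating function provided by the absolute continuity of $v$. Hence $t\mapsto \|v(t)\|$ lies in $AC_{\text{loc}}([0,\infty);\mathbb{R})$, is a.e.\ differentiable, and satisfies $\bigl|\frac{d}{dt}\|v(t)\|\bigr|\le |\dot v|(t)$ for almost every $t$ (using that $|\dot v|$ is the smallest admissible dominating function).

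Second, I would upgrade this to the $p$-th power. Because $\|v(\cdot)\|$ is locally absolutely continuous on $[0,\infty)$, it is bounded on every compact interval, say by $R$ on $[0,T]$. The function $\varphi:[0,R]\to\mathbb{R}$, $\varphi(r)=\frac{1}{p}r^p$, is Lipschitz on $[0,R]$ with constant $R^{p-1}$ (since $\varphi'(r)=r^{p-1}\le R^{p-1}$). The composition of a Lipschitz function with an absolutely continuous function is absolutely continuous, so $t\mapsto \frac{1}{p}\|v(t)\|^p$ is absolutely continuous on $[0,T]$, and thus locally absolutely continuous on $[0,\infty)$.

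Finally, I would establish the derivative bound pointwise almost everywhere. At any $t$ where both $\|v(\cdot)\|$ and $\frac{1}{p}\|v(\cdot)\|^p$ are differentiable and $\|v(t)\|>0$, the chain rule yields
\[
\frac{d}{dt}\frac{1}{p}\|v(t)\|^p \;=\; \|v(t)\|^{p-1}\,\frac{d}{dt}\|v(t)\|,
\]
and taking absolute values together with Step 1 gives \eqref{AbsContPower}. At points where $\|v(t)\|=0$, when $p>1$ the estimate
\[
\left|\frac{\tfrac{1}{p}\|v(t+h)\|^p}{h}\right|\le \frac{1}{p}\|v(t+h)\|^{p-1}\cdot\frac{\|v(t+h)\|}{|h|}
\]
shows the derivative (when it exists) must equal $0$, which trivially satisfies \eqref{AbsContPower}; when $p=1$, the inequality reduces to Step 1 itself.

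The only subtle point, and the one I expect to be the main obstacle, is the chain rule at times when $\|v(t)\|=0$: one cannot blindly apply the product formula since $\|v(\cdot)\|$ may fail to be differentiable there. I handle this by a direct difference-quotient estimate in the $p>1$ case, which exploits the fact that $\|v\|\ge 0$ forces the first-order behavior near a zero to be compatible with $\varphi'(0)=0$. Everything else is standard manipulation of absolutely continuous real-valued functions.
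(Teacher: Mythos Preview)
Your proof is correct, but it takes a different route from the paper's. The paper also begins with the $p=1$ case exactly as you do (reverse triangle inequality to show $t\mapsto\|v(t)\|$ is locally absolutely continuous with $\bigl|\frac{d}{dt}\|v(t)\|\bigr|\le|\dot v|(t)$). For $p>1$, however, the paper does not reduce to the real-valued composition $\varphi\circ\|v(\cdot)\|$. Instead it works directly on the Banach-space level: it observes that the local slope of the convex functional $\Psi(w)=\frac{1}{p}\|w\|^p$ equals $\|w\|^{p-1}$ (from \eqref{SpecialSlope} and the description of $\mathcal J_p$), and then invokes the abstract chain-rule bound \eqref{ChainBound}, which immediately gives $\bigl|\frac{d}{dt}\frac{1}{p}\|v(t)\|^p\bigr|\le\|v(t)\|^{p-1}|\dot v|(t)$ with no case analysis at zeros.

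Your argument is more elementary and self-contained---it needs only real-variable absolute continuity and the Lipschitz composition fact---at the cost of the separate treatment of times where $\|v(t)\|=0$. (Incidentally, that case can be dispatched even more quickly than you do: since $\frac{1}{p}\|v(\cdot)\|^p\ge 0$ attains a minimum at such $t$, its derivative, when it exists, is automatically zero.) The paper's approach is shorter because it leverages the metric-slope machinery already introduced for the curves-of-maximal-slope framework, and it handles all points uniformly.
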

\begin{proof}
Suppose first that $p=1$. Then by the triangle inequality 
$$|\| v(t)\| -\| v(s)\| |\le \| v(t)-v(s)\| \le \int^t_s|\dot{v}|(\tau)d\tau,$$
 for $0\le s\le t<\infty$. Thus, $t\mapsto\| v(t)\| $ is 
locally absolutely continuous and 
$$
\left|\frac{d}{dt}\| v(t)\| \right|\le |\dot{v}|(t)
$$
for almost every $t\ge 0$. 
\par Now assume $p>1$. The argument just given above implies $t\mapsto \frac{1}{p}\| v(t)\| ^p$ is locally absolutely continuous.  Also recall that $\Jp$ is the subdifferential of the convex function $X\ni w\mapsto\frac{1}{p}\| w\| ^p$ and that $\xi\in \Jp(w)$ if and only if $\langle \xi, w\rangle=\| \xi\| ^q_*=\| w\| ^p$. As a result, we use \eqref{SpecialSlope} to compute the local slope
$$
\left|\partial\left(\frac{1}{p}\| \cdot\| ^p\right) \right|(w)=\| w\| ^{p-1}, \quad w\in X. 
$$
Consequently, \eqref{AbsContPower} follows from the chain rule bound \eqref{ChainBound} applied to $t\mapsto \frac{1}{p}\| v(t)\| ^p$.
\end{proof}
In view of Proposition \ref{diffimpliesslope}, it is straightforward to verify that if $g\in X\setminus\{0\}$ minimizes $\Phi(u)/\| u\| ^p$, then 
\begin{equation}\label{SepVarTime}
v(t)=e^{-\mu_p t}g, \quad t\ge 0
\end{equation}
is a $p$-curve of maximal slope for $\Phi$. We will also 
prove that $v$ given by \eqref{SepVarTime} is the {\it unique} $p$-curve of maximal slope for $\Phi$ with initial condition $v(0)=g$. 
First, we will need to verify the following monotonicity property.  

\begin{prop}
Assume that $v$ is a $p$-curve of maximal slope for $\Phi$ and $v(t)\neq 0$ for each $t\ge 0$. Then 
\begin{equation}\label{MonRayleighGo}
\frac{d}{dt}\left\{\frac{p\Phi(v(t))}{\| v(t)\| ^p}\right\}\le 0
\end{equation}
for almost every $t>0$.
\end{prop}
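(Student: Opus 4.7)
The plan is to differentiate the Rayleigh quotient $R(t) := p\Phi(v(t))/\|v(t)\|^p$ and show that $R'(t)\le 0$ for almost every $t>0$. Since $R$ will be locally absolutely continuous, this implies \eqref{MonRayleighGo}.

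First I would establish absolute continuity. By the chain-rule identity \eqref{AprioriEst2}, $t\mapsto \Phi(v(t))$ is locally absolutely continuous, and by Lemma \ref{PowerAbs}, so is $t\mapsto \|v(t)\|^p$. The hypothesis $v(t)\neq 0$ combined with the continuity of $\|v(\cdot)\|$ implies that the denominator is bounded below by a positive constant on every compact subinterval of $[0,\infty)$. Hence $R$ is the ratio of two locally absolutely continuous functions with nowhere-vanishing denominator, so it is itself locally absolutely continuous, and verifying $R'(t)\le 0$ almost everywhere suffices.

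Next I would apply the quotient rule at an almost every $t>0$ where both numerator and denominator are differentiable. The sign of $R'(t)$ agrees with the sign of
$$\frac{d}{dt}\Phi(v(t)) \cdot \|v(t)\|^p \;-\; \Phi(v(t)) \cdot \frac{d}{dt}\|v(t)\|^p.$$
By \eqref{AprioriEst}, the first term equals $-|\dot v|^p(t)\|v(t)\|^p$. By Lemma \ref{PowerAbs}, $\frac{d}{dt}\|v(t)\|^p \ge -p\|v(t)\|^{p-1}|\dot v|(t)$, so the expression above is bounded above by
$$-|\dot v|^p(t)\|v(t)\|^p + p\Phi(v(t))\|v(t)\|^{p-1}|\dot v|(t).$$
This is nonpositive precisely when $p\Phi(v(t)) \le |\dot v|^{p-1}(t)\|v(t)\|$. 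But this is exactly the intermediate estimate derived in the proof of Lemma \ref{usefullemma}: choosing $\xi(t)\in\partial\Phi(v(t))$ with $\|\xi(t)\|_*=|\partial\Phi|(v(t))$, the Euler identity \eqref{Euler} gives $p\Phi(v(t)) = \langle \xi(t),v(t)\rangle \le |\partial\Phi|(v(t))\|v(t)\| = |\dot v|^{p-1}(t)\|v(t)\|$, where the last equality uses \eqref{AprioriEst}.

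I do not anticipate a serious obstacle; the argument is bookkeeping with chain rules already in hand. The only delicate point is the sign: Lemma \ref{PowerAbs} is an absolute-value bound, so one must use it in its lower-bound form $\frac{d}{dt}\|v(t)\|^p \ge -p\|v(t)\|^{p-1}|\dot v|(t)$ to control the potentially negative contribution of $-\Phi(v(t))\frac{d}{dt}\|v(t)\|^p$; the Euler identity then provides exactly the dominating inequality needed to close the estimate.
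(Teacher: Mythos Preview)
Your proof is correct and follows essentially the same route as the paper's: quotient rule, the lower-bound form of Lemma \ref{PowerAbs} for $\frac{d}{dt}\|v(t)\|^p$, and the intermediate estimate $p\Phi(v(t))\le |\dot v|^{p-1}(t)\|v(t)\|$ from the proof of Lemma \ref{usefullemma}. If anything, you are slightly more careful than the paper in justifying that the quotient itself is locally absolutely continuous.
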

\begin{proof}
Recall that both functions $t\mapsto \Phi(v(t))$ and $t\mapsto \| v(t)\| ^p$ are locally absolutely continuous. Thus, the quotient rule gives 
$$
\frac{d}{dt}\left\{\frac{p\Phi(v(t))}{\| v(t)\| ^p}\right\}=-p\frac{|\dot{v}|^p(t)}{\| v(t)\| ^p}-\frac{p\Phi(v(t))}{(\| v(t)\| ^p)^2}\frac{d}{dt}\| v(t)\| ^p
$$
at almost every $t>0$. By Lemma \ref{PowerAbs}, 
$$
\frac{d}{dt}\| v(t)\| ^p\ge - p\| v(t)\| ^{p-1}|\dot{v}|(t).
$$
Consequently,
\begin{align}\label{DiffInequality}
\frac{d}{dt}\left\{\frac{p\Phi(v(t))}{\| v(t)\| ^p}\right\}&\le -p\frac{|\dot{v}|^p(t)}{\| v(t)\| ^p}+\frac{p\Phi(v(t))}{(\| v(t)\| ^p)^2}p\| v(t)\| ^{p-1}|\dot{v}|(t)\nonumber \\
&=\frac{-p}{(\| v(t)\| ^p)^2}\left\{|\dot{v}|^p(t)\| v(t)\| ^p -p\Phi(v(t))\| v(t)\| ^{p-1}|\dot{v}|(t)\right\}.
\end{align}
As in the proof of Lemma \ref{usefullemma}
$$
p\Phi(v(t))\le|\partial\Phi|(v(t))\| v(t)\| =|\dot{v}|^{p-1}(t)\| v(t)\| 
$$
and so
$$
p\Phi(v(t))\| v(t)\| ^{p-1}|\dot{v}|(t)\le |\dot{v}|^p(t)\| v(t)\| ^p.
$$
Combining this inequality with \eqref{DiffInequality} allows us to conclude this proof. 
\end{proof}
\begin{cor}\label{UniquenessCor}
Assume $v$ is a $p$-curve of maximal slope for $\Phi$ with $v(0)\in X\setminus\{0\}$. If 
$$
\lambda_p=\frac{p\Phi(v(0))}{\| v(0)\| ^p},
$$
then $v$ is given by \eqref{SepVarTime}. 
\end{cor}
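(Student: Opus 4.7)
The plan is to use simplicity of $\lambda_p$ to reduce \eqref{mainDNEold} to a scalar ODE along the ray through $v(0)$, and then solve that ODE explicitly. By \eqref{MonRayleighGo} the Rayleigh quotient $R(t) := p\Phi(v(t))/\|v(t)\|^p$ is nonincreasing at a.e.\ $t$ on every interval where $v(t) \neq 0$. Since $R(0) = \lambda_p$ and $R(t) \geq \lambda_p$ in general, $R(t) = \lambda_p$ at every such $t$, and simplicity forces $v(t)$ to lie in the one-dimensional span of $v(0)$. Choosing $\xi \in X^*$ with $\langle \xi, v(0)\rangle = 1$ by Hahn-Banach, the scalar $\gamma(t) := \langle \xi, v(t)\rangle$ is locally absolutely continuous with $\gamma(0)=1$, and $v(t) = \gamma(t) v(0)$ whenever $v(t) \neq 0$. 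Setting $T^* := \sup\{T > 0 : v\text{ does not vanish on }[0,T]\} > 0$, on $[0,T^*)$ the function $\gamma$ is nonvanishing, continuous, and equals $1$ at $t=0$, so $\gamma > 0$ throughout.

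Next I would derive and solve an ODE for $\gamma$ on $[0,T^*)$. The $p$-homogeneity of $\Phi$ gives $\Phi(v(t)) = \gamma(t)^p \Phi(v(0))$, while $\|v(t+h) - v(t)\| = |\gamma(t+h) - \gamma(t)|\|v(0)\|$ yields $|\dot v|(t) = |\gamma'(t)|\|v(0)\|$ a.e. Substituting into the energy identity \eqref{AprioriEst} and using $p\Phi(v(0)) = \lambda_p\|v(0)\|^p$ produces
$$
\lambda_p \gamma(t)^{p-1}\gamma'(t) = -|\gamma'(t)|^p \quad \text{a.e.\ on } [0, T^*).
$$
The right-hand side is nonpositive and $\gamma^{p-1} > 0$, forcing $\gamma' \leq 0$; setting $\beta := -\gamma' \geq 0$ factors the equation as $\beta(\beta^{p-1} - \lambda_p \gamma^{p-1}) = 0$, so at a.e.\ $t$ either $\gamma'(t) = 0$ or $\gamma'(t) = -\mu_p \gamma(t)$. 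To discard the first alternative I would invoke Lemma \ref{usefullemma} combined with $p\Phi(v(t)) = \lambda_p\|v(t)\|^p$, which gives $|\dot v|(t) \geq \mu_p \|v(t)\|$ a.e., equivalently $|\gamma'(t)| \geq \mu_p \gamma(t) > 0$. Hence $\gamma'(t) = -\mu_p \gamma(t)$ a.e., and absolute continuity of $\gamma$ yields $\gamma(t) = e^{-\mu_p t}$ on $[0, T^*)$.

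To conclude, if $T^* < \infty$ then by continuity $v(T^*) = \lim_{t \rightarrow T^*} \gamma(t) v(0) = e^{-\mu_p T^*} v(0) \neq 0$, contradicting the definition of $T^*$; hence $T^* = \infty$ and $v(t) = e^{-\mu_p t} v(0)$ for all $t \geq 0$, which is \eqref{SepVarTime}. The principal technical delicacy I anticipate is justifying that $|\dot v|(t) = |\gamma'(t)|\|v(0)\|$ a.e.\ and that the chain rule for $t \mapsto \gamma(t)^p \Phi(v(0))$ applies in the a.e.\ sense required by the definition of a $p$-curve of maximal slope; both become routine once one uses the Hahn-Banach representation $\gamma = \langle \xi, v(\cdot)\rangle$, from which $\gamma$ inherits local absolute continuity from $v$.
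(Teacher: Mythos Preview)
Your proof is correct and follows essentially the same approach as the paper's: both define a maximal interval $[0,T^*)$ on which $v$ does not vanish, use monotonicity \eqref{MonRayleighGo} and simplicity to write $v(t)=\gamma(t)v(0)$ with $\gamma>0$, substitute into the energy identity \eqref{AprioriEst} to obtain a scalar equation for $\gamma$, rule out $\gamma'=0$ via the inequality of Lemma \ref{usefullemma} (the paper uses its consequence Corollary \ref{ScaledDecrease} instead, deducing $\dot\alpha\le -\mu_p\alpha<0$ before solving), and then argue $T^*=\infty$ by continuity. Your Hahn--Banach device for defining $\gamma$ and inheriting its absolute continuity from $v$ is a nice touch; the paper simply asserts the absolute continuity of its coefficient $\alpha$, which one can also read off from $\alpha(t)=\|v(t)\|/\|v(0)\|$ once $\alpha>0$ is known.
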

\begin{proof}
As $v(0)\neq 0$, $v(t)\neq0$ for some interval of time; let $[0,T)$ be the largest such interval. For $t\in [0,T)$,
$$
\lambda_p\le \frac{p\Phi(v(t))}{\| v(t)\| ^p}\le \frac{p\Phi(v(0))}{\| v(0)\| ^p}=\lambda_p
$$
by \eqref{MonRayleighGo}.  Thus $v(t)$ minimizes $\Phi(u)/\| u\| ^p$, and since $\lambda_p$ is simple, there is an absolutely continuous function $\alpha:[0,T)\rightarrow \R$ such that $v(t)=\alpha(t)v(0)$.  Note $\alpha(0)=1$ and $\alpha(t)>0$ for 
$t\in (0,T)$.  Also observe that since $t\mapsto \Phi(e^{\mu_p t}v(t))=\left(e^{\mu_p t}\alpha(t)\right)^p\Phi(v(0))$ 
is nonincreasing, $t\mapsto e^{\mu_p t}\alpha(t)$ is nonincreasing. Thus, 
$$
\frac{d}{dt}e^{\mu_p t}\alpha(t)=e^{\mu_p t}(\dot\alpha(t)+\mu_p \alpha(t))\le 0
$$ 
and so $\dot\alpha(t)\le -\mu_p \alpha(t)<0$ for almost every $t\in (0,T)$. 

\par By \eqref{AprioriEst}, we have for almost every $t\in (0,T)$,
\begin{align*}
0&=\frac{d}{dt}\Phi(v(t))+|\dot v|^p(t)\\
&=\frac{d}{dt}\Phi(\alpha(t)v(0))+\| \dot v(t)\| ^p\\
&=\Phi(v(0))\frac{d}{dt}[\alpha(t)^p]+|\dot\alpha(t)|^p\| v(0)\| ^p\\
&=p\Phi(v(0))\alpha(t)^{p-1}\dot\alpha(t)+|\dot\alpha(t)|^p\| v(0)\| ^p\\
&=\lambda_p\| v(0)\| ^p\alpha(t)^{p-1}\dot\alpha(t)+|\dot\alpha(t)|^{p-2}\dot\alpha(t)\dot\alpha(t)\| v(0)\| ^p\\
&=\| v(0)\| ^p\dot\alpha(t)\left(|\mu_p\alpha(t)|^{p-2}\mu_p\alpha(t)+|\dot\alpha(t)|^{p-2}\dot\alpha(t) \right).
\end{align*}
As a result, $\dot\alpha(t)= -\mu_p \alpha(t)$ and thus $\alpha(t)=e^{-\mu_p t}$.  Consequently, $v(t)=e^{-\mu_p t}v(0)$ for $t\in[0,T)$. However, $v(T)=e^{-\mu_p T}v(0)\neq 0$ and therefore $T=+\infty$. We conclude that $v(t)=e^{-\mu_p t}v(0)$ for $t\in [0,\infty)$. 
\end{proof}

\begin{lem}\label{CompactLem}
Assume $(v^k)_{k\in \N}$ is a sequence of $p$-curves of maximal slope for $\Phi$ such that 
\begin{equation}\label{gkayBd}
\sup_{k\in \N}\Phi(v^k(0))<\infty.
\end{equation}
Then there is a subsequence $(v^{k_j})_{k\in \N}$ 
and $v\in AC^p_\text{loc}([0,\infty); X)$ such that
\begin{equation}\label{UnifVeeKay}
\lim_{j\rightarrow\infty}\sup_{t\in [0,T]}\| v^{k_j}(t)-v(t)\| = 0\quad\text{for each}\; T>0,
\end{equation}
\begin{equation}\label{StrongVkeyDot}
\lim_{j\rightarrow\infty}|\dot{v}^{k_j}|=|\dot{v}|\quad\text{in}\; L^p_\text{loc}[0,\infty),
\end{equation}
\begin{equation}\label{StrongPartialPhikay}
\lim_{j\rightarrow\infty}|\partial\Phi|(v^{k_j})=|\partial\Phi|(v)\quad\text{in}\; L^q_\text{loc}[0,\infty),
\end{equation}
and
\begin{equation}\label{PointwisePhiConv}
\lim_{j\rightarrow \infty}\Phi(v^{k_j}(t))=\Phi(v(t)), \quad\text{for each}\; t> 0.
\end{equation}
Moreover, $v$ is a $p$-curve of maximal slope for $\Phi$ with $v(0)\in \dom(\Phi)$. 
\end{lem}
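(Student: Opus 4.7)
My plan is to combine three compactness mechanisms---Arzela--Ascoli for the trajectories, weak $L^p_{\text{loc}}$ and $L^q_{\text{loc}}$ compactness for the metric derivatives and local slopes, and Helly's selection theorem for the monotone functions $\Phi\circ v^k$---and then pass to the limit in the energy dissipation identity \eqref{AprioriEst2} to obtain a chain of inequalities which, together with the chain rule for the limit, will force all of the claimed convergences.

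Set $M:=\sup_k\Phi(v^k(0))<\infty$. The energy identity for $v^k$ on $[0,\infty)$ and $\Phi\ge 0$ give
$$\int_0^\infty \tfrac{1}{p}|\dot{v}^k|^p\, d\tau + \int_0^\infty \tfrac{1}{q}|\partial\Phi|^q(v^k)\, d\tau \le M,$$
and Corollary \ref{ScaledDecrease} yields $\Phi(v^k(t))\le e^{-p\mu_p t}M$. H\"older applied to $\|v^k(t)-v^k(s)\|\le \int_s^t|\dot{v}^k|d\tau$ produces the uniform $1/q$-H\"older estimate $\|v^k(t)-v^k(s)\|\le (pM)^{1/p}(t-s)^{1/q}$, and each $v^k(t)$ lies in the compact sublevel set $\{\Phi\le M\}$. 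Arzela--Ascoli furnishes a subsequence (still written $v^{k_j}$) converging uniformly on each $[0,T]$ to some $v$, establishing \eqref{UnifVeeKay} and $v(0)\in\{\Phi\le M\}\subset\dom(\Phi)$. Reflexivity and the $L^p_{\text{loc}}$, $L^q_{\text{loc}}$ bounds let me further extract so that $|\dot{v}^{k_j}|\rightharpoonup m$ in $L^p_{\text{loc}}$ and $|\partial\Phi|(v^{k_j})\rightharpoonup \ell$ in $L^q_{\text{loc}}$. Combining the uniform convergence $v^{k_j}\to v$ with lower semicontinuity of the metric derivative (Theorem 1.1.2 in \cite{AGS}) and of the slope (Proposition 1.4.4 in \cite{AGS}) gives $|\dot{v}|\le m$ and $|\partial\Phi|(v)\le \ell$ a.e.; hence $v\in AC^p_{\text{loc}}$, and $(|\partial\Phi|\circ v)|\dot{v}|\in L^1_{\text{loc}}$ by H\"older, so $\Phi\circ v$ is locally absolutely continuous. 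Helly applied to the nonincreasing, uniformly bounded sequence $\Phi(v^{k_j}(\cdot))$ yields one further subsequence along which $\Phi(v^{k_j}(t))\to\phi(t)$ at every $t\ge 0$, with $\phi$ nonincreasing and $\phi(t)\ge \Phi(v(t))$ by lower semicontinuity.

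Passing to the limit in \eqref{AprioriEst2} on $[s,t]$ and using weak lower semicontinuity of the $L^p$ and $L^q$ norms together with the chain-rule bound \eqref{ChainBound} and Young's inequality for $v$, I arrive at
$$\phi(s)-\phi(t)\;\ge\;\int_s^t\!\Bigl[\tfrac{1}{p}m^p+\tfrac{1}{q}\ell^q\Bigr]d\tau\;\ge\;\int_s^t\!\Bigl[\tfrac{1}{p}|\dot{v}|^p+\tfrac{1}{q}|\partial\Phi|^q(v)\Bigr]d\tau\;\ge\;\Phi(v(s))-\Phi(v(t))$$
for $0\le s\le t$. The remaining claims reduce to forcing every inequality above to be an equality; equivalently, to showing that the nonnegative, nonincreasing gap $G:=\phi-\Phi\circ v$ vanishes identically. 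The decay bound $\phi(t)\le e^{-p\mu_p t}M$ of Corollary \ref{ScaledDecrease} already forces $G(t)\to 0$ as $t\to\infty$.

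The principal technical hurdle is closing the gap $G\equiv 0$. I expect to argue as follows: at Lebesgue points $t_0>0$ of $m^p$ and $\ell^q$, apply the energy identity for $v^{k_j}$ on the shrinking intervals $[t_0-\delta,t_0+\delta]$, combine the pointwise convergences $\Phi(v^{k_j}(t_0\pm\delta))\to\phi(t_0\pm\delta)$ with the weak-$L^p$ and weak-$L^q$ lower bounds, and divide by $2\delta$ to identify (as $\delta\downarrow 0$) the a.e.\ derivatives $-\phi'=\tfrac{1}{p}m^p+\tfrac{1}{q}\ell^q=\tfrac{1}{p}|\dot v|^p+\tfrac{1}{q}|\partial\Phi|^q(v)=-\tfrac{d}{dt}(\Phi\circ v)$, which together with $m\ge|\dot v|$, $\ell\ge|\partial\Phi|(v)$ force $m=|\dot v|$ and $\ell=|\partial\Phi|(v)$ a.e.\ and $\phi=\Phi\circ v$. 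Once this is in place, equality of norms in $L^p_{\text{loc}}$ and $L^q_{\text{loc}}$ combined with weak convergence and uniform convexity (for exponents in $(1,\infty)$) upgrades the weak convergences to the strong convergences \eqref{StrongVkeyDot} and \eqref{StrongPartialPhikay}; the identity $\phi=\Phi\circ v$ is exactly \eqref{PointwisePhiConv}; and differentiating $\Phi(v(s))-\Phi(v(t))=\int_s^t[\tfrac{1}{p}|\dot v|^p+\tfrac{1}{q}|\partial\Phi|^q(v)]d\tau$ exhibits $v$ as a $p$-curve of maximal slope for $\Phi$.
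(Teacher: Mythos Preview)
Your overall architecture---Arzel\`a--Ascoli for the trajectories, weak compactness for $|\dot v^{k}|$, Helly for $\Phi\circ v^k$, and passing to the limit in the energy identity---matches the paper's. The difficulty you correctly isolate is closing the gap $G=\phi-\Phi\circ v$, but the Lebesgue-point argument you sketch does not do this. Passing to the limit in the energy identity on $[t_0-\delta,t_0+\delta]$ using weak lower semicontinuity only yields
\[
\phi(t_0-\delta)-\phi(t_0+\delta)\;\ge\;\int_{t_0-\delta}^{t_0+\delta}\Bigl[\tfrac{1}{p}m^p+\tfrac{1}{q}\ell^q\Bigr]d\tau,
\]
hence after dividing by $2\delta$ and sending $\delta\downarrow 0$ you get $-\phi'\ge \tfrac{1}{p}m^p+\tfrac{1}{q}\ell^q$ a.e., not equality. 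Combined with the chain-rule bound for $v$ this only reproduces the differentiated form of the chain of inequalities you already had, i.e.\ $G'\le 0$ a.e., which is no more than ``$G$ is nonincreasing''. Since $\phi$ is merely monotone (it could have a singular part), you cannot integrate $-\phi'$ back to $\phi(s)-\phi(t)$, and knowing $G(\infty)=0$ together with $G$ nonincreasing and $G\ge 0$ is still compatible with $G>0$ on any bounded interval. A secondary issue: the pointwise inequality $|\partial\Phi|(v(t))\le \ell(t)$ a.e.\ that you invoke does not follow from weak convergence $|\partial\Phi|(v^{k_j})\rightharpoonup\ell$ and pointwise lower semicontinuity of the slope; these two facts are not directly comparable.

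The paper supplies the missing upper bound on $\phi$ by using convexity directly: with $\xi^j(t)\in\partial\Phi(v^{k_j}(t))$ of minimal norm,
\[
\Phi(v(t))\;\ge\;\Phi(v^{k_j}(t))-|\partial\Phi|(v^{k_j}(t))\,\|v(t)-v^{k_j}(t)\|.
\]
Integrating over any bounded measurable $E$ and using the uniform $L^q$ bound on the slope together with uniform convergence $v^{k_j}\to v$ gives $\int_E\Phi(v)\ge\limsup_j\int_E\Phi(v^{k_j})$; combined with Fatou this yields $\lim_j\int_E\Phi(v^{k_j})=\int_E\Phi(v)$ for every bounded $E$, hence $\Phi(v^{k_j}(t))\to\Phi(v(t))$ for a.e.\ $t$. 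Helly plus the absolute continuity of $\Phi\circ v$ then upgrades this to all $t>0$, i.e.\ $\phi\equiv\Phi\circ v$. Once you have this, your final paragraph (deducing that $v$ is a $p$-curve of maximal slope, and upgrading weak to strong convergence via equality of norms) goes through as written.
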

\begin{proof} By \eqref{AprioriEst2},
$$
\int^t_0\left(\frac{1}{p}|\dot{v}^{k}|^p(\tau)+\frac{1}{q}|\partial\Phi|^q(v^k(\tau))\right)d\tau+\Phi(v^k(t)) = \Phi(v^k(0))
$$
for each $t\ge 0$ and $k\in \N$. Combining this identity with assumption \eqref{gkayBd} gives
$$
\sup_{k\in \N}\left\{\int^\infty_0|\dot{v}^{k}|^p(t)dt +\int^\infty_0|\partial\Phi|^q(v^k(t))dt+\sup_{t\in[0,\infty)}\Phi(v^k(t))\right\}<\infty.
$$
As a result, the sequence $(v^k)_{k\in N}$ is equicontinuous and $(v^k(t))_{k\in \N}$ is precompact in $X$ for each $t\ge 0$. It follows from a variant of the Arzel\`{a}-Ascoli Theorem (Lemma 1 of \cite{Simon}) that there is a subsequence $(v^{k_j})_{j\in N}$ converging to some $v: [0,\infty)\rightarrow X$ locally uniformly on $[0,\infty)$.  That is, \eqref{UnifVeeKay} holds; and since $\Phi(v(0))\le \liminf_{k\rightarrow\infty}\Phi(v^k(0))$ by lower semicontinuity,  $v(0)\in \dom(\Phi)$.

\par As $(|\dot{v}^k|)_{k\in \N}$ is bounded in $L^p[0,\infty)$, it has (up to a subsequence) a weak limit $h\in L^p[0,\infty)$. Note that 
for $0\le s\le t<\infty$
\begin{align*}
\| v(t)-v(s)\| &=\lim_{j\rightarrow\infty}\| v^{k_j}(t)-v^{k_j}(s)\|  \\
&\le \lim_{j\rightarrow\infty}\int^t_s|\dot{v}^{k_j}|(\tau)d\tau\\
& =\int^t_sh(\tau)d\tau.
\end{align*}
Thus, $|\dot v|\le h$ and  $v\in AC^p_\text{loc}([0,\infty),X)$.  Moreover, 
\begin{equation}\label{vjWeakConv}
\int_E|\dot v|^p(\tau)d\tau\le \int_E (h(\tau))^pd\tau\le\liminf_{j\rightarrow\infty}\int_E|{\dot v}^{k_j}|^p(\tau)d\tau
\end{equation}
for any Lebesgue measurable $E$. Similarly, as $w\mapsto |\partial \Phi|(w)$ is lower semicontinuous, 
Fatou's lemma gives
\begin{equation}\label{PhijWeakConv}
\liminf_{j\rightarrow\infty}\int_E|\partial\Phi|^q(v^{k_j}(\tau))d\tau\ge\int_E \liminf_{j\rightarrow\infty}|\partial\Phi|^q(v^{k_j}(\tau))d\tau
\ge \int_E|\partial\Phi|^q(v(\tau))d\tau.
\end{equation}

\par For $j\in \N$, select $\xi^j(t)\in\partial\Phi(v^{k_j}(t))$ such that $\| \xi^{j}(t)\| _*=|\partial\Phi|(v^{k_j}(t))$ for almost every $t\ge 0$.   Note that
\begin{align*}
\Phi(v(t))&\ge \Phi(v^{k_j}(t))+\langle \xi^j(t),v(t)-v^{k_j}(t)\rangle \\
& \ge \Phi(v^{k_j}(t))- |\partial\Phi|(v^{k_j}(t))\| v(t)-v^{k_j}(t)\| 
\end{align*}
for almost every time $t>0.$ Since $(|\partial\Phi|(v^{k_j}))_{j\in \N}$ is bounded in $L^q_{\text{loc}}[0,\infty)$ and $v^{k_j}$ converges to $v$ locally uniformly, 
$$
\int_{E}\Phi(v(t))dt\ge \limsup_{j\rightarrow\infty}\int_{E}\Phi(v^{k_j}(t))dt.
$$
for each bounded Lebesgue measurable $E\subset[0,\infty)$. By Fatou's lemma and the lower semicontinuity of $\Phi$
$$
\liminf_{j\rightarrow\infty}\int_{E}\Phi(v^{k_j}(t))dt\ge\int_{E} \liminf_{j\rightarrow\infty}\Phi(v^{k_j}(t))dt
\ge \int_{E}\Phi(v(t))dt.
$$
\par As a result, $\lim_{j\rightarrow\infty}\int_{E}\Phi(v^{k_j}(t))dt=\int_{E}\Phi(v(t))dt.$  Since $E$ was only assumed to be bounded and measurable, 
\begin{equation}
\label{liminfae}
\liminf_{j\rightarrow\infty}\Phi(v^{k_j}(t))=\Phi(v(t))
\end{equation}
for almost every $t\ge 0$.  As each function $t\mapsto \Phi(v^{k_j}(t))$ is nonincreasing and bounded, we may apply Helly's selection principle
(Lemma 3.3.3 in \cite{AGS}) to conclude the limit $f(t):=\lim_{j\rightarrow\infty}\Phi(v^{k_j}(t))$ exists for every $t\ge 0$ since it occurs for a subsequence. From \eqref{liminfae} it follows that
$f(t)=\Phi(v(t))$ for almost every $t\ge 0$ and by the lower semicontinuity of $\Phi$, $f(t)\ge \Phi(v(t))$ for all $t\ge 0$.  

\par For any given $t_0>0$, we may select a sequence of positive numbers $t_k\nearrow t_0$ such that 
$f(t_k)=\Phi(v(t_k))$ for each $k\in \N$. Indeed, the set of times $t$ for which $f(t)=\Phi(v(t))$ for $t\in (t_0-\delta,t_0)$ has full measure for each $\delta\in (0,t_0)$ and is thus nonempty. As
$f$ is nonincreasing, $f(t_0)\le f(t_k)$. In addition \eqref{vjWeakConv} and \eqref{PhijWeakConv} imply that $(|\partial\Psi|\circ v)|\dot v|\in L^1([0,T])$ so that $\Phi\circ v$ is absolutely continuous. Hence, 
$$
f(t_0)\le \lim_{k\rightarrow\infty}f(t_k)=\lim_{k\rightarrow\infty}\Phi(v(t_k))=\Phi(v(t_0))\le f(t_0). 
$$
Thus $f\equiv \Phi\circ v$ which implies $\lim_{j\rightarrow\infty}\Phi(v^{k_j}(t))=\Phi(v(t))$ for all $t>0$, as asserted in \eqref{PointwisePhiConv}.

\par Let $t_0<t_1$ and use \eqref{vjWeakConv} and \eqref{PhijWeakConv} to send $j\rightarrow\infty$ in the equation  
$$
\int^{t_1}_{t_0}\left(\frac{1}{p}|\dot{v}^{k_j}|^p(\tau)+\frac{1}{q}|\partial\Phi|^q(v^{k_j}(\tau))\right)d\tau+\Phi(v^{k_j}(t_1)) = \Phi(v^{k_j}(t_0))
$$
to arrive at 
\begin{equation}\label{strongtimeDeriv1}
\int^{t_1}_{t_0}\left(\frac{1}{p}|\dot{v}|^p(\tau)+\frac{1}{q}|\partial\Phi|^q(v(\tau))\right)d\tau+\Phi(v(t_1))\le \Phi(v(t_0)).
\end{equation}
Since $\Phi\circ v$ is nonincreasing and thus differentiable for almost every $t\ge 0$, this implies
$$
\frac{d}{dt}\Phi(v(t))\le -\frac{1}{p}|\dot{v}|^p(t)-\frac{1}{q}|\partial\Phi|^q(v(t)), \quad \text{a.e. $t>0$}.
$$
Thus $v$ is a curve of maximal slope for $\Phi$. As noted above, this means that equality actually holds in \eqref{strongtimeDeriv1} 
which implies \eqref{StrongVkeyDot} and \eqref{StrongPartialPhikay}.  
\end{proof}
A final technical assertion is needed for our proof of Theorem \ref{DNEthm}. The claim below is a continuous time analog of the Lemma \ref{SignLem}. 
\begin{lem}\label{SignLem2}
Assume $w\in X\setminus\{0\}$ is a minimizer of $\Phi(u)/\| u\| ^p$ and $C>0$. There is a $\delta=\delta(w,C)>0$ with the 
following property. If $v$ is curve of maximal slope for $\Phi$ and 
\begin{enumerate}[$(i)$]
\item $\Phi(v(0))\ge \Phi(w)$
\item $\| v(0)\| \le C$,
\item $\alpha_w(v(0))\ge \frac{1}{2}$,
\item 
$\frac{p\Phi(v(0))}{\| v(0)\| ^p}\le \lambda_p+\delta$,
\end{enumerate}
then 
$$
\alpha_w\left(e^{\mu_p t}v(t)\right)\ge \frac{1}{2},\quad t\in[0,1]. 
$$
\end{lem}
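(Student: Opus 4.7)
My plan is to mirror the argument of Lemma \ref{SignLem}, replacing Lemma \ref{DiscreteCompactness} by Lemma \ref{CompactLem} and Corollary \ref{discreteUniqueness} by Corollary \ref{UniquenessCor}, with one additional passage to the limit in the time variable. Arguing by contradiction, suppose the assertion fails: there exist $w_0 \in X \setminus \{0\}$ minimizing $\Phi(u)/\|u\|^p$, a constant $C_0 > 0$, a sequence of $p$-curves of maximal slope $(v^j)_{j \in \N}$ each satisfying (i)--(iv) with $w$, $C$, $\delta$ replaced by $w_0$, $C_0$, $1/j$, and times $t_j \in [0,1]$ with $\alpha_{w_0}(e^{\mu_p t_j} v^j(t_j)) < 1/2$.

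Setting $g^j := v^j(0)$, conditions (ii) and (iv) together force $\sup_j \Phi(g^j) < \infty$, so Lemma \ref{CompactLem} produces, after extracting a (non-relabeled) subsequence, a $p$-curve of maximal slope $v$ with $v(0) \in \dom(\Phi)$ such that $v^j \to v$ locally uniformly on $[0,\infty)$; in particular $g^j \to g := v(0)$ in $X$. Next I show $g$ is a minimizer. From \eqref{Poincare} and (i), $\|g^j\|^p \ge p\Phi(w_0)/(\lambda_p + 1/j)$, so passing to the limit and using continuity of the norm gives $\|g\| \ge \|w_0\| > 0$. The lower semicontinuity of $\Phi$, continuity of the norm, and (iv) then give
\begin{equation*}
\lambda_p \le \frac{p\Phi(g)}{\|g\|^p} \le \liminf_{j \to \infty}\frac{p\Phi(g^j)}{\|g^j\|^p} \le \lim_{j \to \infty}\Bigl(\lambda_p + \tfrac{1}{j}\Bigr) = \lambda_p,
\end{equation*}
so equality holds throughout and $g$ minimizes $\Phi(u)/\|u\|^p$.

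Since $\lambda_p$ is simple, $g = \gamma w_0$ for some $\gamma \in \R$, and $\|g\| \ge \|w_0\|$ forces $|\gamma| \ge 1$. Using (iii) together with part (iii) of Proposition \ref{alphaWcont}, $\alpha_{w_0}(g) = \lim_j \alpha_{w_0}(g^j) \ge 1/2$, so $\gamma^+ \ge 1/2$, and hence $\gamma \ge 1$. Because $g$ is a minimizer, Corollary \ref{UniquenessCor} gives $v(t) = e^{-\mu_p t} g$ for every $t \ge 0$, so $e^{\mu_p t} v(t) \equiv g = \gamma w_0$. Passing to a further subsequence with $t_j \to t_\infty \in [0,1]$, local uniform convergence of $v^j$ to $v$ on $[0,1]$ combined with continuity of $v$ yields $v^j(t_j) \to v(t_\infty)$, and therefore $e^{\mu_p t_j} v^j(t_j) \to e^{\mu_p t_\infty} v(t_\infty) = g = \gamma w_0$. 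Applying part (iii) of Proposition \ref{alphaWcont} once more at $\gamma w_0$,
\begin{equation*}
\gamma = \alpha_{w_0}(\gamma w_0) = \lim_{j \to \infty}\alpha_{w_0}\bigl(e^{\mu_p t_j} v^j(t_j)\bigr) \le \tfrac{1}{2},
\end{equation*}
contradicting $\gamma \ge 1$, which completes the argument.

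The main obstacle I anticipate is identifying the limit $g$ as a minimizer despite having only lower semicontinuity of $\Phi$: this is handled by sandwiching the limiting Rayleigh quotient between Poincar\'e's inequality (which gives a definite lower bound on $\|g^j\|$, hence on $\|g\|$) and the approximate-minimality hypothesis (iv). The secondary point is the iterated limit in $j$ and then in $t_j$; the local uniform convergence guaranteed by Lemma \ref{CompactLem}, together with continuity of the limiting curve $v$, makes this passage essentially automatic.
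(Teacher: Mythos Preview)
Your proof is correct and follows essentially the same contradiction argument as the paper: assume failure, extract a limit curve via Lemma \ref{CompactLem}, identify it as the separated-variables solution through Corollary \ref{UniquenessCor}, and derive incompatible bounds $\gamma\ge 1$ versus $\gamma\le\tfrac12$. The only cosmetic differences are that you establish the minimizer property of $g=v(0)$ via lower semicontinuity of $\Phi$ and norm convergence (rather than invoking pointwise convergence of $\Phi(v^j(t))$ at $t=0$, which Lemma \ref{CompactLem} guarantees only for $t>0$), and you pass to a further subsequence in $t_j$, which is harmless but unnecessary since $e^{\mu_p t}v^j(t)\to e^{\mu_p t}v(t)=g$ uniformly on $[0,1]$.
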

\begin{proof}
Assume the assertion does not hold. Then there is $w_0\in X\setminus\{0\}$ that minimizes $\Phi(u)/\| u\| ^p$ and constant $C_0$ for which there is a $p$-curve of maximal slope for $\Phi$ labeled  $v^j$ satisfying
\begin{enumerate}[$(i)$]
\item $\Phi(v^j(0))\ge \Phi(w_0)$
\item $\| v^j(0)\| \le C_0$,
\item $\alpha_{w_0}(v^j(0))\ge \frac{1}{2}$,
\item 
$\frac{p\Phi(v(0)^j)}{\| v^j(0)\| ^p}\le \lambda_p+\frac{1}{j}$,
\end{enumerate}
while 
\begin{equation}\label{alphaWzeroOnceAgain}
\alpha_{w_0}\left(e^{\mu_p t_j}v^j(t_j)\right)< \frac{1}{2} 
\end{equation}
for some $t_j\in [0,1]$.

\par  In view of the bounds $(ii)$ and $(iv)$, $\sup_{j\in \N}\Phi(v^j(0))<\infty$. By Lemma \ref{CompactLem}, $(v^j)_{j\in \N}$ converges (up to a subsequence) uniformly in $t\in [0,1]$ to another $p$-curve of maximal slope $v$, with $v(0)\in \dom(\Phi)$, and $\lim_{j\rightarrow\infty}\Phi(v^j(t))=\Phi(v(t))$ for each $t\in [0,1]$.  Sending $j\rightarrow \infty$ in $(i)$ gives $\Phi(v(0))\ge \Phi(w_0)>0$, so $v(0)\neq 0$; and by sending $j\rightarrow \infty$ in $(iv)$ gives that $v(0)$ minimizes $\Phi(u)/\| u\| ^p$, so that $v(0)=\gamma w_0$ by simplicity.  By Corollary \ref{UniquenessCor}, $v(t)=e^{-\mu_p t}v(0)$ for $t\in [0,1]$.  

\par We also have 
$$
\lambda_p\| v(0)\| ^p=p\Phi(v(0))=\lim_{j\rightarrow\infty}p\Phi(v^j(0))\ge \Phi(w_0)=\lambda_p\| w_0\| ^p
$$
Thus, $\| v(0)\| \ge \| w_0\| $ or equivalently $|\gamma|\geq 1$. As $\gamma^+=\alpha_{w_0}(v(0))\ge \frac{1}{2}>0$, it must actually be that $\gamma \ge 1$.  However, we may use part $(iii)$ of Proposition \ref{alphaWcont} to send $j\rightarrow\infty$ in \eqref{alphaWzeroOnceAgain} to get
$$
\gamma=\alpha_{w_0}(v(0))=\lim_{j\rightarrow\infty}\alpha_{w_0}\left(e^{\mu_p t_j}v^j(t_j)\right)\le \frac{1}{2}.
$$
As a result, the hypotheses of this lemma could not hold. Therefore, we have verified the claim. 
\end{proof}

\par We are now finally in position to prove Theorem \ref{DNEthm}. 
\begin{proof}[Proof of Theorem \ref{DNEthm}]
We define $S:=\lim_{t\rightarrow\infty}\Phi(e^{\mu_p t}v(t))$. Recall that this limit exists by Corollary \ref{ScaledDecrease}. If $S=0$, we 
conclude. So let us now assume $S>0$, and let $(s_{k})_{k\in \N}$ be a sequence of 
positive numbers that increase to $+\infty$.  Set 
$$
w^k(t):=e^{\mu_p s_k}v(t+s_k), \quad t\ge 0.
$$
By the homogeneity of \eqref{mainDNEnew}, each $w^k$ is $p$-curve of maximal slope for $\Phi$ with $w^k(0)=e^{\mu_p s_k}v(s_k)$. By \eqref{vGoToZeroRate}, 
$\Phi(w^k(0))\le\Phi(v(0))$ for each $k\in \N$.  

\par Lemma \ref{CompactLem} implies there is a subsequence $(w^{k_j})_{j\in \N}$  converging locally uniformly to another $p$-curve of maximal slope for $\Phi$ labeled $w$ with 
$$
\Phi(e^{\mu_pt}w(t))=\lim_{j\rightarrow\infty}\Phi(e^{\mu_pt}w^{k_j}(t))=\lim_{j\rightarrow\infty}\Phi\left(e^{\mu_p (s_{k_j}+t)}v(t+s_{k_j})\right)=S
$$
for every $t\ge 0$. We compute
\begin{align*}
0&=\frac{d}{dt}\Phi(e^{\mu_p t}w(t))  \\
&=\frac{d}{dt}e^{p\mu_p t}\Phi(w(t))  \\
&=e^{p\mu_p t}\left(p\mu_p\Phi(w(t))-|\dot{w}|^p(t)\right) 
\end{align*}
for almost every $t\ge 0$. 

\par In view of the proof of inequality \eqref{UsefulEstimate}, $w(t)$ must be a minimizer of $\Phi(u)/\| u\| ^p$ for almost every $t\ge 0$. Furthermore, since $t\mapsto\|w(t)\|$ and $t\mapsto\Phi(w(t))$ are locally absolutely continuous, $w(t)$ is a minimizer of $\Phi(u)/\| u\| ^p$ for every $t\ge 0$.  By Corollary \ref{UniquenessCor}, 
$w(t)=e^{-\mu_p t}w(0)$. Moreover, by the monotonicity formula \eqref{MonRayleighGo} and \eqref{PointwisePhiConv}
$$
\lim_{t\rightarrow\infty}\frac{p\Phi(v(t))}{\| v(t)\| ^p}=\lim_{j\rightarrow\infty}\frac{p\Phi(v(s_{k_j}))}{\| v(s_{k_j})\| ^p}
=\lim_{j\rightarrow\infty}\frac{p\Phi(w^{k_j}(0))}{\| w^{k_j}(0)\| ^p}=\frac{p\Phi(w(0))}{\| w(0)\| ^p}=\lambda_p.
$$

\par In order to conclude that the limit $w(0)$ is independent of the sequence $(s_{k_j})_{j\in \N}$, we will make use of Lemma \ref{SignLem2}. Define 
$$
C:=\left(\frac{p}{\lambda_p}\Phi(v(0))\right)^{1/p},
$$
which is finite by hypothesis. Also recall that inequality \eqref{Poincare} and Corollary \ref{ScaledDecrease} imply
$$
\| e^{\mu_pt}v(t)\| \le\left(\frac{p}{\lambda_p}\Phi(e^{\mu_pt}v(t))\right)^{1/p}\le\left(\frac{p}{\lambda_p}\Phi(v(0))\right)^{1/p}=C
$$
for $t\ge 0$.  Now select $j_0\in \N$ so large that 
$$
\frac{p\Phi(w^{k_j}(0))}{\| w^{k_j}(0)\| ^p}\le \lambda_p +\delta\quad \text{and}\quad \alpha_{w(0)}(w^{k_j}(0))\ge \frac{1}{2}
$$
for all $j\ge j_0$. Here $\delta=\delta(w(0),C)>0$ is the number in the statement of Lemma \ref{SignLem2}.

\par We claim that in fact 
\begin{equation}\label{PosPartBoundBelow}
\alpha_{w(0)}(e^{\mu_p t}w^{k_j}(t))\ge \frac{1}{2},
\end{equation}
for all $t\ge 0$ and $j\ge j_0$. Observe that $\Phi(w^{k_j}(0))\ge S=\Phi(w(0))$ and $\| w^{k_j}(0)\| \le C$ for all $j\in \N$. So 
for any $j\ge j_0$, $w^{k_j}$ satisfies the hypotheses Lemma  \ref{SignLem2} which implies that \eqref{PosPartBoundBelow} 
holds for $t\in [0,1]$. 

\par Now define $u(t):=e^{\mu_p}w^{k_j}(t+1)$. Note that $u$ is a $p$-curve of maximal slope for $\Phi$. Also notice
\begin{enumerate}[$(i)$]
\item $\Phi(u(0))=\Phi(e^{\mu_p(1+s_{k_j})}v(s_{k_j}+1))\ge S= \Phi(w(0)),$
\item $\| u(0)\| =\| e^{\mu_p}w^{k_j}(1)\| =\| e^{\mu_p(1+s_{k_j})}v(s_{k_j}+1)\| \le C$,
\item $\alpha_{w(0)}(u(0))=\alpha_{w(0)}(e^{\mu_p}w^{k_j}(1))\ge \frac{1}{2}$, 
\item and
$$
\frac{p\Phi(u(0))}{\| u(0)\| ^p}= \frac{p\Phi(w^{k_j}(1))}{\| w^{k_j}(1)\| ^p}\le\frac{p\Phi(w^{k_j}(0))}{\| w^{k_j}(0)\| ^p} \le \lambda_p+\delta.
$$
\end{enumerate}
By Lemma \ref{SignLem2}, $\alpha_{w(0)}(e^{\mu_p t}u(t))=\alpha_{w(0)}(e^{\mu_p (t+1)}w^{k_j}(t+1))\ge \frac{1}{2}$ for $t\in [0,1]$. 
As a result, \eqref{PosPartBoundBelow} holds for $t\in [1,2]$ and thus for all $t\in [0,2]$ and for $j\ge j_0$. Continuing this argument we may establish \eqref{PosPartBoundBelow} by induction on the intervals $[\ell,\ell+1]$ $(\ell\in \N)$. 

\par Now suppose there is another sequence of positive numbers $(t_\ell)_{\ell\in \N}$ increasing to infinity such that 
$(e^{\mu_pt_\ell}v(t_\ell))_{\ell\in \N}$ converges to a minimizer $z$ of $\Phi(u)/\| u\| ^p$. From our reasoning above, it must be that 
\begin{equation}\label{PhiZeeS}
\Phi(z)=\Phi(w(0))=S.
\end{equation}
As $\lambda_p$ is simple, $z=\gamma w(0)$ for some $\gamma\in \R\setminus\{0\}$. Of course, if $\gamma>0$ then $\gamma=1$ by 
\eqref{PhiZeeS}. Now suppose $\gamma<0$ and select a subsequence  $(t_{\ell_j})_{j\in \N}$ such that 
$$
t_{\ell_j}>s_{k_j}\quad j\in\N. 
$$
Next, substitute $t=t_{\ell_j}-s_{k_j}>0$ in \eqref{PosPartBoundBelow} to get
$$
\alpha_{w(0)}\left(e^{\mu_p t_{\ell_j}}v(t_{\ell_j})\right)=\alpha_{w(0)}\left(e^{\mu_p (t_{\ell_j}-s_{k_j})}w^{k_j}(t_{\ell_j}-s_{k_j})\right)\ge \frac{1}{2}.
$$
Letting $j\rightarrow \infty$ above gives
$$
\alpha_{w(0)}\left(\gamma w(0)\right)\ge\frac{1}{2},
$$
which cannot occur as $\alpha_{w(0)}\left(\gamma w(0)\right)=0$ for $\gamma<0$. Thus $z=w(0)$. Since the subsequential limit $w(0)$ is independent of the sequence $(s_{k_j})_{j\in \N}$, we conclude $\lim_{t\rightarrow\infty}e^{\mu_p t}v(t)=w(0)$.
\end{proof}
\begin{rem}\label{evolnotsimple}
Without assuming that $\lambda_p$ is simple, our argument above shows that if $
S:=\lim_{t\rightarrow\infty}\Phi(e^{\mu_p t}v(t))>0$ then
$$
\lambda_p=\lim_{t\rightarrow\infty}\frac{p\Phi(v(t))}{\| v(t)\| ^p}.
$$
Moreover, we did not need to assume that $\lambda_p$ is simple in order to conclude that there is a subsequence of positive numbers $(t_k)_{k\in \N}$ increasing to $\infty$ so that $(e^{\mu_p t_k}v(t_k))$ converges; and if $S>0$, $\lim_ke^{\mu_p t_k}v(t_k)$ is a minimizer of $\Phi(u)/\| u\| ^p$. 
\end{rem}

\begin{ex}
We continue our discussion of Example \ref{HilbertSpaceEx} in the context of \eqref{mainDNEnew}.  Here $X$ is a separable Hilbert space 
which is necessarily reflexive. In particular, all absolutely continuous paths with values $X$ are differentiable almost everywhere, so all curves of maximal slope satisfy the doubly nonlinear evolution \eqref{mainDNEold}. In this setting, \eqref{mainDNEold} takes the form 
\begin{equation}\label{DNElinear}
\dot{v}(t)+Av(t)=0 \quad (\text{a.e.}\; t> 0)\\
\end{equation}
\par If $v(0)=\sum_{k\in \N}a_kz_k\in Y$, then 
$$
v(t)=\sum_{k\in \N}a_ke^{-\sigma_kt}z_k
$$
is the corresponding solution of \eqref{DNElinear}.  Assuming $\sigma_1<\sigma_2$, we have 
$$
e^{\sigma_1 t}v(t)=a_1z_1 +\sum_{k\ge 2}a_ke^{-(\sigma_k-\sigma_1)t}z_k\rightarrow a_1z_1
$$
as $t\rightarrow \infty$. Moreover, if $a_1=(v(0),z_1)\neq 0$
$$
\frac{(Av(t),v(t))}{\| v(t)\| ^2}=\frac{a_1^2\sigma_1+\sum_{z\ge 2}\sigma_ka_k^2e^{-2(\sigma_k-\sigma_1)t}}{a_1^2+\sum_{z\ge 2}a_k^2e^{-2(\sigma_k-\sigma_1)t}} \rightarrow \sigma_1
$$
as $t\rightarrow \infty$. Recall that $\mu_2=\lambda_2=\sigma_1$ for this example, so these assertions are consistent with
Theorem \ref{DNEthm}. 
\end{ex}

\begin{ex}
Assume $X=L^p(\Omega)$ and $\Phi$ is given by \eqref{pLaplacePhi}. Recall that $L^p(\Omega)$ is reflexive so any $p$-curve of maximal slope for $\Phi$ satisfies the doubly nonlinear evolution \eqref{mainDNEold}. For this example, \eqref{mainDNEold} is the PDE and boundary condition
$$
\begin{cases}
|v_t|^{p-2}{v_t}=\Delta_pv \quad & \Omega\times(0,\infty) \\
\hspace{.48in}v=0\quad & \partial\Omega\times[0,\infty)\\
\end{cases}.
$$
Theorem \ref{DNEthm} asserts that if $v(\cdot,0)\in W^{1,p}_0(\Omega)$,
$$
w(x)=\lim_{t\rightarrow\infty}e^{\mu_p t}v(x,t)
$$
exists in $W^{1,p}_0(\Omega).$ When $w\neq 0$, $w$ satisfies \eqref{pGroundState} and 
$$
\lim_{t\rightarrow\infty}\frac{\displaystyle \int_\Omega|Dv(x,t)|^pdx}{\displaystyle \int_\Omega|v(x,t)|^pdx}=\lambda_p.
$$
See our previous work \cite{HyndLindgren2} for a detailed discussion. 
\end{ex}

\begin{ex} 
Assume $X=L^p(\Omega)$ and now $\Phi$ is given by \eqref{pFracLaplacePhi}.  As in the previous example, the evolution \eqref{mainDNEold} corresponds  to the PDE and boundary condition
$$
\begin{cases}
|v_t|^{p-2}{v_t}=-(-\Delta_p)^sv \quad & \Omega\times(0,\infty) \\
\hspace{.48in}v=0\quad &(\R^n\setminus\Omega)\times[0,\infty)\\
\end{cases}.
$$
Theorem \ref{DNEthm} asserts that if $v(\cdot,0)\in W^{s,p}_0(\Omega)$,
\begin{equation}\label{wLimitSp}
w(x)=\lim_{t\rightarrow\infty}e^{\mu_p t}v(x,t)
\end{equation}
exists in $W^{s,p}_0(\Omega).$ If $w\neq 0$, then $w$ satisfies \eqref{spGroundState} and 
$$
\lim_{t\rightarrow\infty}\frac{\displaystyle \iint_{\R^n\times\R^n}\frac{|v(x,t)-v(y,t)|^p}{|x-y|^{n+ps}}dxdy}{\displaystyle \int_\Omega|v(x,t)|^pdx}=\lambda_p.
$$
See our paper \cite{HyndLindgren3} for a recent account. In particular, we showed that the limit \eqref{wLimitSp} also holds uniformly.
\end{ex}

%RobinEx
\begin{ex}
Let us return to Example \ref{RobinEx}. Here the doubly nonlinear evolution is the PDE boundary value equation
$$
\begin{cases}
\hspace{1.18in}|v_t|^{p-2}{v_t}=\Delta_pv \quad & \Omega\times(0,\infty) \\
|Dv|^{p-2}Dv\cdot \nu+\beta|v|^{p-2}v=0\quad & \partial\Omega\times[0,\infty)\\
\end{cases}
$$
and $v(\cdot,0)\in W^{1,p}(\Omega)$. 
 By Theorem \ref{InvItThm}, the limit $w=\lim_{t\rightarrow\infty}e^{\mu_pt}v(\cdot,t)$ exists
in $W^{1,p}(\Omega)$. If $w$ doesn't vanish identically, then $w$ satisfies \eqref{RobinGroundState} and
$$
\lambda_p=\lim_{t\rightarrow\infty}\frac{\displaystyle \int_\Omega|Dv(x,t)|^pdx+\beta \int_{\partial\Omega}|Tv(x,t)|^pd\sigma(x)}{\displaystyle \int_\Omega|v(x,t)|^pdx}.
$$
\end{ex}

\begin{ex}
In Example \ref{NeumannEx}, $L^p(\Omega)/{\cal C}$ is reflexive since $L^p(\Omega)$ is reflexive. 
The corresponding doubly nonlinear evolution is 
$$
\begin{cases}
\hspace{.4in}|v_t|^{p-2}{v_t}=\Delta_pv \quad & \Omega\times(0,\infty) \\
|Dv|^{p-2}Dv\cdot \nu=0\quad & \partial\Omega\times[0,\infty)
\end{cases}
$$
with $\int_\Omega |v(x,t)|^{p-2}v(x,t)dx=0$ for $t\ge 0$. Here we also assume $v(\cdot,0)\in W^{1,p}(\Omega)$.  Theorem  \ref{InvItThm} implies the limit $w=\lim_{t\rightarrow\infty}e^{\mu_pt}v(\cdot,t)$ exists
in $W^{1,p}(\Omega)$. If this limit does not identically vanish, then 
$$
\lambda_p=\lim_{t\rightarrow\infty}\frac{\displaystyle \int_\Omega|Dv(x,t)|^pdx}{\displaystyle \int_\Omega|v(x,t)|^pdx}.
$$
\end{ex}

\begin{ex}
Suppose that $p>n$, $X=C(\overline{\Omega})$ and $\Phi$ is again given by \eqref{pLaplacePhi}. It is known that in this 
case $X$ does not satisfy the Radon-Nikodym property (Example 2.1.6 and Theorem 2.3.6 of \cite{Bourgin}). Here $p$-curves of maximal slope satisfy \eqref{mainDNEnew} 
$$
\frac{d}{dt}(\Phi\circ v)(t)\le - \frac{1}{p}|\dot{v}|^p(t)-\frac{1}{q}|\partial\Phi|^q(v(t)) 
$$
for almost every $t>0$ with 
$$
|\dot v|(t):=\lim_{h\rightarrow 0}\frac{\| v(\cdot,t+h)-v(\cdot,t)\| _{\infty}}{|h|}.
$$

\par Now suppose that $\Omega$ is a ball, so that the corresponding $\lambda_p$ is simple.  According to Theorem  \ref{DNEthm}, if $v(\cdot,0)\in W^{1,p}_0(\Omega)$, then $w=\lim_{t\rightarrow\infty}e^{\mu_p t}v(\cdot,t)$ exists in $W^{1,p}_0(\Omega)$. Moreover, if $w\not\equiv 0$, then $w$ satisfies the equation \eqref{pGroundStateMeasure} and 
$$
\lambda_p=\lim_{t\rightarrow\infty}\frac{\displaystyle \int_{\Omega}|Dv(x,t)|^pdx}{\displaystyle \| v(\cdot,t)\| ^p_\infty}.
$$
\end{ex}

% Trace Example 
\begin{ex}
The doubly nonlinear evolution associated with approximating the norm of the trace operator $T$, as presented in Example \ref{TraceEx}, is 
\begin{equation}\label{TraceEvolution}
\begin{cases}
\hspace{.63in}-\Delta_pv +|v|^{p-2}v=0 \quad & \Omega\times(0,\infty) \\
|Dv|^{p-2}Dv\cdot \nu+|v_t|^{p-2}{v_t}=0\quad & \partial\Omega\times[0,\infty)\\
\end{cases}
\end{equation}
with $v(\cdot,0)\in W^{1,p}(\Omega)$. The weak formulation for this equation is 
{\small 
$$
\int_\Omega(|Dv(x,t)|^{p-2}Dv(x,t)\cdot D\phi(x) + |v(x,t)|^{p-2}v(x,t)\phi(x))dx=\int_{\partial\Omega}|\partial_t(Tv)(\cdot,t)|^{p-2}\partial_t(Tv)(\cdot,t)(T\phi)d\sigma
$$}
for $\phi\in W^{1,p}(\Omega)$ and almost every time $t\in [0,\infty)$.  Here a solution $v$ satisfies 
$$
v\in L^\infty([0,\infty), W^{1,p}(\Omega)), \quad \text{and}\quad Tv\in AC^p_{\text{loc}}([0,\infty), L^p(\partial\Omega;\sigma)).
$$ 

\par Recall that the corresponding optimal value $\lambda_p=\| T\| ^{-p}$ is simple.  By employing virtually the same arguments used to prove Theorem \ref{DNEthm}, we can show 
$w=\lim_{t\rightarrow\infty}e^{\mu_p}v(\cdot, t)$ exists in $W^{1,p}(\Omega)$. If $Tw\neq 0$, then $w$ 
satisfies \eqref{SteklovEivalueProb} and 
$$
\| T\| =\lambda_p^{-1/p}=\lim_{t\rightarrow\infty}\frac{\| Tv(t)\| _{L^p(\partial\Omega;\sigma)}}{\| v(t)\| _{W^{1,p}(\Omega)}}.
$$
Therefore, the doubly nonlinear evolution \eqref{TraceEvolution} can be used to approximate the operator norm of the Sobolev trace mapping. 
\end{ex}

\par We have proved the main results of this paper under the assumption that $\lambda_p$ is simple. However, we suspect this is not necessary for the convergence we describe to occur. We conjecture that both Theorem \ref{InvItThm} and Theorem \ref{DNEthm} hold without this assumption.

% Reference Section

\end{document}